\journal{Journal of \LaTeX\ Templates}
\newcommand\BJ{\bgroup\markoverwith
  {\textcolor{yellow}{\rule[-.5ex]{2pt}{3.5ex}}}\ULon}
\begin{document}
\begin{frontmatter}
\title{Dual minus partial order}

\author[1]{Ju Gao}
\ead{gaoju120085@163.com}

\author[1]{Hongxing Wang\corref{mycorrespondingauthor}}
\ead{winghongxing0902@163.com}

\author[2]{Xiaoji Liu}
\ead{xiaojiliu72@126.com}

\cortext[mycorrespondingauthor]{Corresponding author}
\address[1]{School of Mathematics and Physics, 
Guangxi Minzu University,
Nanning, 530006, China}
\address[2]{School of Education,
Guangxi Vocational Normal University,
Nanning, 530007,  China}

\begin{abstract}
 In this paper, we introduce the Dual-minus partial order, get some characterizations of the partial order, and  prove that both the dual star partial order and the dual sharp partial order are Dual-minus-type partial orders. Based on the Dual-minus partial order, we introduce  the Dual-minus sharp partial order and the Dual-minus star partial order, which are also Dual-minus-type partial orders. In addition, we discuss relationships among the Dual-minus sharp partial order, the D-sharp partial order and the G-sharp partial order(the Dual-minus star partial order, the D-star partial order and the P-star partial order).
\end{abstract}

\begin{keyword}
Dual-minus partial order;
Dual-minus star partial order;
Dual-minus sharp partial order;
dual star partial order;
dual sharp partial order
\MSC[2010]  15A09\sep  15A24 \sep 62G30
\end{keyword}
\end{frontmatter}


\section{Introduction}\label{Section-1-Introduction}
\newcommand{\rk}{{\rm rk}}
\newcommand{\Ind}{ {\rm Ind}}

\numberwithin{equation}{section}
\newtheorem{theorem}{T{\scriptsize HEOREM}}[section]
\newtheorem{lemma}[theorem]{L{\scriptsize  EMMA}}
\newtheorem{corollary}[theorem]{C{\scriptsize OROLLARY}}
\newtheorem{proposition}[theorem]{P{\scriptsize ROPOSITION}}
\newtheorem{remark}{R{\scriptsize  EMARK}}[section]
\newtheorem{definition}{D{\scriptsize  EFINITION}}[section]
\newtheorem{algorithm}{A{\scriptsize  LGORITHM}}[section]
\newtheorem{example}{E{\scriptsize  XAMPLE}}[section]
\newtheorem{problem}{P{\scriptsize  ROBLEM}}[section]
\newtheorem{assumption}{A{\scriptsize  SSUMPTION}}[section]

 In this paper,  $\mathbb{R}^{m\times n}$ is used to represent the set of $m\times n$ real matrices.
  Let $\rk(E)$ represent the rank of matrix $E$,
$E^{\rm {T}}$  represent
transpose of $E\in\mathbb{R}^{m\times n}$, ${\rm Ind}\left( E \right)$ represent the index of $E$ and $I_n$ be the identity matrix of $n$ order.
Furthermore, denote
$${\mathbb{R}}_n^{\tiny \mbox{\rm CM}}=\{E\mid E\in \mathbb{R}^{n\times n}, \  \rk\left(E^2 \right)=\rk \left(E \right)\}.$$
The symbol
$\varepsilon$ is used to represent the hypercomplex unit basis,
which satisfies $\varepsilon \neq 0$, $0 \varepsilon=\varepsilon 0 =0$,
$1 \varepsilon=\varepsilon 1 =\varepsilon$ and $\varepsilon^2=0$;
$\widehat E=E + \varepsilon E_0$ is used to represent an $m$-by-$n$ dual matrix,
where
$E$ and $E_0 \in\mathbb{R}^{m\times n}$ are the standard part and the dual part of $\widehat E$,  respectively.
The symbol  $\mathbb{D}^{m\times n}$  is used to represent the set of $m\times n$  dual matrices.
Recently, dual matrix theory and its applications attract much attention.
 Many researchers have yielded fruitful and interesting results
\cite{Stefanelli R E2007,Valentini P P 2018,Qi L Standard,Qi L vector,Qi L analysis,Udwadia F E2020,
wang2020mamt,gaoju,Wang2024pjo,Wang2023laaA,Wei2023arXiv,Wei2024coam}.

It is well known that  the Moore-Penrose inverse $E^\dagger$ of $E\in\mathbb{R}^{m \times n}$ is the unique matrix which  satisfies the Penrose
equations; see e.g. \cite{A2003-generalized inverse,2018-guangyini}:
  $EE^\dagger E = E$,   $E^\dagger EE^\dagger = E^\dagger$,   $(EE^\dagger)^{\rm T} = EE^\dagger$,   $(E^\dagger E)^{\rm T}= E^\dagger E$.
It is widely used in the study of singular linear systems.
In order to study singular dual linear systems, several dual generalized inverses are introduced.
This symbol $\widehat  E^{p}=E^{\dag}-\varepsilon E^{\dag}E_0E^{\dag}$ \cite{Stefanelli R E2007} is used to represent the Moore-Penrose dual generalized inverse (MPDGI for short)
of $\widehat  E=E+\varepsilon E_0\in\mathbb{D}^{m\times n}$.
 For any dual matrix, its MPDGI always exists.
Pennestr\`{\i} et al.\cite{Valentini P P 2018}
 give algorithms/formulas  about the
computation of the MPDGI.

If there exists a unique matrix $\widehat X\in\mathbb{D}^{n\times m}$ satisfying the Penrose equations:
\begin{align}
\label{DMPGI-Def}
 \widehat E\widehat X\widehat E=\widehat E, \
 \widehat X\widehat E\widehat X=\widehat X,\
 \widehat E\widehat X=\left(\widehat E\widehat X\right)^{  T},\
 \widehat X\widehat E=\left(\widehat X\widehat E\right)^{  T},
\end{align}
then $\widehat X$ is called the dual Moore-Penrose generalized inverse (DMPGI for short)
 of $\widehat E$ \cite{Udwadia F E2020},
and denoted  by $\widehat X=\widehat E^{\dag}$.
In \cite{Udwadia F E2020}, Udwadia et al.  show that not all dual matrices have DMPGIs, and get some interesting properties of the DMPGI.
In \cite{wang2020mamt}, Wang  gets that
the DMPGI of  $\widehat E$ exists if and only if $\left(I_m-EE^{\dag}\right)  E_0 (I_n-E^{\dag}E)=0$.

It is well known that  $E\in\mathbb{R}^{n\times n}$ is group invertible   if and only if the index of $E$ is 1.
The  group inverse $E^\#$ is  the unique matrix $X\in\mathbb{R}^{n\times n}$
satisfying
$ EXE=E$, $ XEX=X$ and  $EX=XE$ \cite{{A2003-generalized inverse}, {2018-guangyini}},
where  $E\in\mathbb{R}^{n\times n}$ and ${\rm Ind}(E)=1$.
In  \cite{zhong2022},
 Zhong and Zhang  introduce  the  dual group generalized inverse (DGGI for short).
Let $\widehat E \in\mathbb{D}^{n\times n}$,
 if a dual matrix $\widehat X \in\mathbb{D}^{n\times n}$ satisfies
\begin{align}
\nonumber
 \widehat E \widehat X \widehat E=\widehat E,\quad
 \widehat X \widehat E \widehat X=\widehat X,\quad
 \widehat E \widehat X=\widehat X \widehat E,
\end{align}
then $\widehat X$ is called DGGI of $\widehat E$,  and is denoted by $\widehat E^\#$.
Zhong and Zhang \cite{zhong2022}  also give
some equivalent conditions for the existence of   DGGI.
The DGGI of  $\widehat E$ exists if and only if
 ${\rm Ind}(E)=1$ and $\left(I_n-EE^{\#}\right)  E_0 (I_n-E^{\#}E)=0$, \cite{zhong2022}.
Due to the special structure of dual matrix,
\begin{align*}
 \widehat E^g = E^\#-\varepsilon E^\#E_0E^\#
  \end{align*}
 is called
the group  dual generalized inverse(GDGI for short)
of  $\widehat E$.
Obviously, $\widehat E^g$ always  exists  when the index of $E$ is one,
while
DGGI may not exist under the same restriction.
In \cite{gaoju}, Wang and Gao introduce the dual index one
and prove that the dual index ${\rm Ind}\left(\widehat{E}\right)$ of $\widehat{E}$ is one
if and only if
its DGGI exists.
  Denote
$$\mathbb{D}_n^{\tiny \mbox{\rm CM}}
=
\left\{\widehat{E} \left|
{\rm Ind}\left(\widehat{E}\right)=1, \ \widehat{E} \in\mathbb{D}^{n\times n}
\right.\right\}.$$
And it is easy to check that the DMPGI of any dual matrix in $\mathbb{D}_n^{\tiny \mbox{\rm CM}}$ exists.

As widely recognized, generalized inverse stands out as an essential tool for investigating matrix partial
orders. A matrix partial order, constituting a binary relation on a non-empty set, is characterized by its
reflexivity, transitivity, and antisymmetry properties.
 The matrix partial order theory is found widespread application diverse research domains,
 encompassing fields like data mining, engineering, genetics, physics, probability, and socioeconomic studies\cite{{2017-Fattore,2018Mostajeran,1992Peajcariaac,2008Simovici}}.
Recently,
Wang  et al.\cite{huangpei,tianhe} introduce several dual partial orders by using dual generalized inverses.

Let $\widehat E=E + \varepsilon E_0$,
$\widehat F=F + \varepsilon F_0\in\mathbb{D}^{m\times n}$,
and the DMPGIs of $\widehat  E$ and $\widehat  F$  exist. Then

(1) D-star partial order:
 $\widehat E\overset{\tiny\mbox{\rm  D\!-}\ast}\leq\widehat F$:
 $\widehat E^{\rm {T}}\widehat E=\widehat E^{\rm {T}}\widehat  F$,
 $\widehat E\widehat E^{\rm {T}}=\widehat F\widehat E^{\rm {T}}$;

(2) P-star partial order:
 $\widehat E\overset{\tiny\mbox{\rm  P\!-}{*}}\leq\widehat F$:
 $\widehat E^p\widehat E=\widehat E^p\widehat F$,
 $\widehat E\widehat E^p=\widehat F\widehat E^p$;

 Let $\widehat E=E + \varepsilon E_0,
 \widehat F=F + \varepsilon F_0\in\mathbb{D}_n^{\tiny \mbox{\rm CM}}$.
  Then

(3) D-sharp partial order:
 $\widehat E\overset{\tiny\mbox{\rm  D \!-}\#}\leq\widehat F$:
 $\widehat E^{\#}\widehat E=\widehat E^{\#}\widehat  F$,
 $\widehat E\widehat E^{\#}=\widehat F\widehat E^{\#}$;

 (4) G-sharp partial order:
 $\widehat E\overset{\tiny\mbox{\rm  G \!-}\#}\leq\widehat F$:
 $\widehat E^g\widehat E=\widehat E^g\widehat F$,
 $\widehat E\widehat E^g=\widehat F\widehat E^g$.

 As we know, in real(complex) field, minus partial order plays a crucial role,
and many famous matrix partial orders belong to minus-type partial orders,
such as the star partial order, the sharp partial order  and so on.
 In the paper, we will introduce a new binary  relation ---  the Dual-minus binary relation.
 Consider if it constitutes a matrix partial order.
If so,
 can the Dual-minus partial order include dual sharp partial orders and dual star partial orders?
 The rest of this paper is as follows:

In Section \ref{Section-2-Preliminaries},  we present some preliminary  knowledge.
In Section \ref{Section-3-Dual-minus-PO}, we introduce  the Dual-minus partial order, provide  its equivalent characterization,
  prove that the D-star, P-star, D-sharp, G-sharp partial orders are all Dual-minus-type partial orders,
and provide equivalent characterizations of these partial orders.
In Sections \ref{Section-4-Dual-minus-sharp-PO}(\ref{Section-5-Dual-minus-star-PO}), we introduce a new dual sharp(star) partial order ---  the Dual-minus sharp(star) partial order. While providing some equivalent characterizations of this partial order, we discuss its relationships with the D-sharp partial order and the G-sharp partial order(the D-star partial order and the P-star partial order).  The Dual-minus sharp(star) partial order belongs to the Dual-minus-type partial order and includes the D-sharp partial order and the G-sharp partial order(the D-star partial order and the P-star partial order).

\section{Preliminaries}\label{Section-2-Preliminaries}
In this section, we provide preliminary knowledge about dual generalized inverses and matrix partial orders
 including  the minus partial order, sharp partial order, star partial order,
 D-sharp partial order, G-sharp partial order, D-star partial order and P-star partial order.

\begin{lemma}[\cite{Mitra2010juzhenfenjie}]
\label{lemma-1}
Let $E, F \in \mathbb{R}^{n\times n}$,
 ${\rm \rk}\left( E \right)=r_e$ and ${\rm \rk}\left( F \right)=r_f$.
Then the following three conditions   are equivalent:
\begin{enumerate}
   \item[{\rm (1)}] $E \leq F$;
   \item[{\rm (2)}] $\rk\left(F-E\right)=\rk(F)-\rk(E)$;
   \item[{\rm (3)}] There exist orthogonal matrices $U$ and $V$ such that
   \begin{align}
   \label{minus}
   E=U\begin{pmatrix}
   D_1&O&O
   \\O&O&O
   \\O&O&O
   \end{pmatrix}V^{\rm{ T }}
   \ ,\
   F=U\begin{pmatrix}
   D_1+RD_2S& RD_2&O
   \\D_2S&D_2&O
   \\O&O&O
   \end{pmatrix}V^{\rm{ T }},
   \end{align}
   where $D_1\in \mathbb{R}^{r_e\times r_e}$ and $D_2\in \mathbb{R}^{(r_f-r_e)\times (r_f-r_e)}$ are invertible,
      $R\in \mathbb{R}^{r_e\times (r_f-r_e)}$ and $S\in \mathbb{R}^{(r_f-r_e)\times r_e}$ are some suitable matrices.
 \end{enumerate}
\end{lemma}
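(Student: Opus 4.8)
The plan is to prove the cyclic chain $(1)\Rightarrow(2)\Rightarrow(3)\Rightarrow(1)$, taking as the definition of $E\leq F$ the existence of a generalized inverse $G$ of $E$ (so $EGE=E$) with $GE=GF$ and $EG=FG$; whichever equivalent form of the definition is in use, these implications give all three characterizations. I write $\mathcal{C}(\cdot)$ for column space, $\mathcal{N}(\cdot)$ for null space, and set $r_e=\rk(E)$, $r_f=\rk(F)$. For $(1)\Rightarrow(2)$ I would fix such a $G$; the key identity is $FGF=E$, which follows from $FG=EG$, $GF=GE$ and $EGE=E$ via $FGF=EGF=EGE=E$. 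Hence $F-E=F-FGF=(I-EG)F$. Since $EG$ is idempotent with $\mathcal{C}(EG)=\mathcal{C}(E)$ (because $EGE=E$) and $\mathcal{C}(E)\subseteq\mathcal{C}(F)$ (because $E=FGF$), the rank identity $\rk(PF)=\rk(F)-\dim(\mathcal{C}(F)\cap\mathcal{N}(P))$ applied with $P=I-EG$ gives $\rk(F-E)=\rk(F)-\dim(\mathcal{C}(F)\cap\mathcal{C}(E))=\rk(F)-r_e$, which is $(2)$.

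The heart of the argument is $(2)\Rightarrow(3)$. Writing $F=E+(F-E)$, condition $(2)$ says exactly $\rk(F)=\rk(E)+\rk(F-E)$. By the rank-additivity criterion --- additivity of the ranks of a sum is equivalent to the column spaces of the summands meeting only in $0$ together with the row spaces meeting only in $0$ --- this forces $\mathcal{C}(E)\cap\mathcal{C}(F-E)=\{0\}$ and $\mathcal{C}(E^{\rm T})\cap\mathcal{C}((F-E)^{\rm T})=\{0\}$; combining with $\mathcal{C}(E),\mathcal{C}(F-E)\subseteq\mathcal{C}(F)$ and a dimension count gives $\mathcal{C}(F)=\mathcal{C}(E)\oplus\mathcal{C}(F-E)$ and, symmetrically, $\mathcal{C}(F^{\rm T})=\mathcal{C}(E^{\rm T})\oplus\mathcal{C}((F-E)^{\rm T})$. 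I would then choose an orthogonal $U$ whose first $r_e$ columns are an orthonormal basis of $\mathcal{C}(E)$, whose next $r_f-r_e$ columns are an orthonormal basis of $\mathcal{C}(E)^{\perp}\cap\mathcal{C}(F)$, and whose last $n-r_f$ columns are an orthonormal basis of $\mathcal{C}(F)^{\perp}$, and choose $V$ the same way from $\mathcal{C}(E^{\rm T})$ and $\mathcal{C}(F^{\rm T})$. In these coordinates $U^{\rm T}EV$ has both column space and row space equal to the span of the first $r_e$ coordinate vectors, hence
\[
U^{\rm T}EV=\begin{pmatrix}D_1&O&O\\ O&O&O\\ O&O&O\end{pmatrix},\qquad D_1\in\mathbb{R}^{r_e\times r_e}\ \text{invertible},
\]
while $U^{\rm T}(F-E)V$ has column space a complement, inside the span of the first $r_f$ coordinate vectors, of the span of the first $r_e$, so of the form $\mathcal{C}\begin{pmatrix}R\\ I\\ O\end{pmatrix}$, and row space the span of the rows of $(S\ \ I\ \ O)$, for suitable $R\in\mathbb{R}^{r_e\times(r_f-r_e)}$ and $S\in\mathbb{R}^{(r_f-r_e)\times r_e}$; a full-rank factorization then forces $U^{\rm T}(F-E)V=\begin{pmatrix}R\\ I\\ O\end{pmatrix}D_2(S\ \ I\ \ O)$ with $D_2$ invertible of order $r_f-r_e$. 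Adding the two, $U^{\rm T}FV$ is precisely the matrix in \eqref{minus}.

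For $(3)\Rightarrow(1)$ I would exhibit the required generalized inverse directly: in the coordinates of \eqref{minus} put
\[
G=V\begin{pmatrix}I\\ -S\\ O\end{pmatrix}D_1^{-1}(I\ \ -R\ \ O)U^{\rm T}
 =V\begin{pmatrix}D_1^{-1}&-D_1^{-1}R&O\\ -SD_1^{-1}&SD_1^{-1}R&O\\ O&O&O\end{pmatrix}U^{\rm T}.
\]
Since $(I\ \ -R\ \ O)\begin{pmatrix}R\\ I\\ O\end{pmatrix}=O$ and $(S\ \ I\ \ O)\begin{pmatrix}I\\ -S\\ O\end{pmatrix}=O$, block multiplication gives $(F-E)G=O$ and $G(F-E)=O$, that is $EG=FG$ and $GE=GF$, while $EGE=E$ is immediate from $D_1D_1^{-1}D_1=D_1$; hence $E\leq F$, closing the cycle. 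Every step here is a short computation except $(2)\Rightarrow(3)$, and inside it the only non-formal ingredient is the rank-additivity criterion (provable in two lines from the dimension formula for a sum of subspaces, or citable from Marsaglia--Styan); granting that, the construction of $U$ and $V$ and the reading-off of the block shapes are routine, so that criterion is the real obstacle.
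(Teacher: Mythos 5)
The paper offers no proof of this lemma: it is imported verbatim from the cited monograph of Mitra, Bhimasankaram and Malik, so there is no in-paper argument to measure yours against. Judged on its own, your cyclic proof is correct and follows the standard route for the minus order: $(1)\Rightarrow(2)$ via the identity $FGF=E$ and the rank formula $\rk(PF)=\rk(F)-\dim\left(\mathcal{C}(F)\cap\mathcal{N}(P)\right)$ applied to the idempotent $P=I-EG$ (using $\mathcal{N}(I-EG)=\mathcal{C}(EG)=\mathcal{C}(E)$); $(2)\Rightarrow(3)$ via the Marsaglia--Styan rank-additivity criterion and a simultaneous orthogonal block reduction; and $(3)\Rightarrow(1)$ by exhibiting the g-inverse $G$ explicitly, whose verification ($EGE=E$, $(F-E)G=O$, $G(F-E)=O$) is a clean block computation. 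The one wrinkle is in $(2)\Rightarrow(3)$: you cite $\mathcal{C}(E),\mathcal{C}(F-E)\subseteq\mathcal{C}(F)$ as inputs to the dimension count, but neither containment is available from condition (2) alone at that stage. The argument should run the other way: from $F=E+(F-E)$ one has the automatic containment $\mathcal{C}(F)\subseteq\mathcal{C}(E)+\mathcal{C}(F-E)$, the right-hand side is a direct sum of dimension $r_e+(r_f-r_e)=r_f=\rk(F)$ by the trivial-intersection conclusion of rank additivity, and equality of dimensions then forces $\mathcal{C}(F)=\mathcal{C}(E)\oplus\mathcal{C}(F-E)$, from which the two containments you wanted follow as consequences. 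With that one-line reordering (and its row-space analogue) the proof is complete and self-contained, the only external ingredient being the rank-additivity criterion you correctly isolate.
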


\begin{lemma}[\cite{Mitra2010juzhenfenjie}]
\label{xiaotuilun}
Let $E, F \in \mathbb{R}^{n\times n}$,
 ${\rm \rk}\left( E \right)=r_e$, ${\rm \rk}\left( F \right)=r_f$
  and $\Ind(E)=\Ind(F)=1$.
Then $E \leq F$ if and only if there is a nonsingular matrix $P$ such that
\begin{align}
   \label{TUIminus}
   E=P\begin{pmatrix}
   D_1&O&O
   \\O&O&O
   \\O&O&O
   \end{pmatrix}P^{-1}
   \ ,\
   F=P\begin{pmatrix}
   D_1+RD_2S& RD_2&O
   \\D_2S&D_2&O
   \\O&O&O
   \end{pmatrix}P^{-1},
   \end{align}
    where $D_1\in \mathbb{R}^{r_e\times r_e}$
   and $D_2\in \mathbb{R}^{(r_f-r_e)\times (r_f-r_e)}$ are invertible,
$R\in \mathbb{R}^{r_e\times (r_f-r_e)}$ and $S\in \mathbb{R}^{(r_f-r_e)\times r_e}$ are some suitable matrices.
\end{lemma}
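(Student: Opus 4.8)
\noindent\emph{Proof proposal.} The plan is to prove the two implications separately: the implication from the block forms to $E\le F$ by a direct rank count, and the converse by reducing — in two stages, one for each index hypothesis — to a configuration where the block form can simply be read off. For the easy direction, suppose $E$ and $F$ have the stated forms with a common nonsingular $P$. Then
\[
F-E=P\begin{pmatrix}R\\I\\O\end{pmatrix}D_2\begin{pmatrix}S&I&O\end{pmatrix}P^{-1},\qquad
\begin{pmatrix}D_1+RD_2S&RD_2\\D_2S&D_2\end{pmatrix}=\begin{pmatrix}I&R\\O&I\end{pmatrix}\begin{pmatrix}D_1&O\\O&D_2\end{pmatrix}\begin{pmatrix}I&O\\S&I\end{pmatrix};
\]
since $D_1,D_2$ are invertible and the outer factors above have full column, resp.\ row, rank $r_f-r_e$, I would read off $\rk(E)=r_e$, $\rk(F)=r_f$ and $\rk(F-E)=r_f-r_e$, whence $\rk(F-E)=\rk(F)-\rk(E)$ and Lemma~\ref{lemma-1} gives $E\le F$.

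For the converse, assume $E\le F$. First I would record the standard consequences of Lemma~\ref{lemma-1}: $\rk(F-E)=r_f-r_e$, hence (by a dimension count) $\mathcal R(F)=\mathcal R(E)\oplus\mathcal R(F-E)$ and $\mathcal R(F^{\mathrm T})=\mathcal R(E^{\mathrm T})\oplus\mathcal R\big((F-E)^{\mathrm T}\big)$, so in particular $\mathcal R(E)\subseteq\mathcal R(F)$ and $\mathcal N(F)\subseteq\mathcal N(E)$. Stage~1 exploits $\Ind(F)=1$, i.e. $\mathbb R^{n}=\mathcal R(F)\oplus\mathcal N(F)$: taking a nonsingular $Q$ whose first $r_f$ columns span $\mathcal R(F)$ and whose last $n-r_f$ columns span $\mathcal N(F)$, I get $Q^{-1}FQ=\operatorname{diag}(F_1,O)$ with $F_1\in\mathbb R^{r_f\times r_f}$ invertible, while the inclusions above force $Q^{-1}EQ=\operatorname{diag}(E_1,O)$ with $\rk(E_1)=r_e$ and $\Ind(E_1)=1$. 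As the minus order is invariant under conjugation (Lemma~\ref{lemma-1}) and both conjugates share the same trailing zero block, this reduces the problem to the case $E_1\le F_1$ in $\mathbb R^{r_f\times r_f}$ with $F_1$ invertible and $\Ind(E_1)=1$.

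Stage~2 exploits $\Ind(E_1)=1$, i.e. $\mathbb R^{r_f}=\mathcal R(E_1)\oplus\mathcal N(E_1)$: I would pick a nonsingular $P_0$ whose first $r_e$ columns span $\mathcal R(E_1)$ and whose last $r_f-r_e$ columns span $\mathcal N(E_1)$, so that $P_0^{-1}E_1P_0=\operatorname{diag}(D_1,O)$ with $D_1$ invertible, and write $P_0^{-1}F_1P_0=\begin{pmatrix}G&A\\H&B\end{pmatrix}$ in blocks of sizes $r_e$ and $r_f-r_e$. Since $F_1$ is invertible, $\mathcal R(F_1-E_1)$ is a complement of $\mathcal R(E_1)$ in $\mathbb R^{r_f}$ and $\mathcal R\big((F_1-E_1)^{\mathrm T}\big)$ a complement of $\mathcal R(E_1^{\mathrm T})$; reading these in the $P_0$-basis forces $\begin{pmatrix}H&B\end{pmatrix}$ to have full row rank $r_f-r_e$ and $\begin{pmatrix}A\\B\end{pmatrix}$ full column rank $r_f-r_e$. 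Together with $\rk\begin{pmatrix}G-D_1&A\\H&B\end{pmatrix}=\rk(F_1-E_1)=r_f-r_e$, this produces matrices $R$ and $S$ with $G-D_1=RH$, $A=RB$ and $H=BS$; then $\begin{pmatrix}H&B\end{pmatrix}=B\begin{pmatrix}S&I\end{pmatrix}$ has rank $r_f-r_e$, so $D_2:=B$ is invertible and $P_0^{-1}F_1P_0=\begin{pmatrix}D_1+RD_2S&RD_2\\D_2S&D_2\end{pmatrix}$. Setting $P:=Q\operatorname{diag}(P_0,I_{n-r_f})$ then puts $E$ and $F$ simultaneously into the forms \eqref{TUIminus}.

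I expect Stage~2 to be the main obstacle: one has to extract from the bare hypothesis $E_1\le F_1$ the precise rank conditions on $\begin{pmatrix}H&B\end{pmatrix}$ and $\begin{pmatrix}A\\B\end{pmatrix}$ — and hence the invertibility of $D_2$ — because this is exactly the point at which the two-sided equivalence normal form of Lemma~\ref{lemma-1} is upgraded to a one-sided similarity normal form, and the index-$1$ splittings used in the two stages are what make this upgrade possible.
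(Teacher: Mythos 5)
The paper does not prove this lemma at all: it is quoted verbatim from the cited monograph of Mitra, Bhimasankaram and Malik, so there is no in-paper argument to compare yours against. Judged on its own, your proof is correct and complete. The forward direction's two factorizations are right and immediately give $\rk(E)=r_e$, $\rk(F)=r_f$, $\rk(F-E)=r_f-r_e$. In the converse, the "dimension count" you invoke is the standard fact that $\rk(A+B)=\rk(A)+\rk(B)$ forces $\mathcal{R}(A+B)=\mathcal{R}(A)\oplus\mathcal{R}(B)$ (and likewise for transposes), which with $A=E$, $B=F-E$ yields exactly the inclusions $\mathcal{R}(E)\subseteq\mathcal{R}(F)$ and $\mathcal{N}(F)\subseteq\mathcal{N}(E)$ needed for Stage~1; the two index-one splittings then do reduce everything to the block computation of Stage~2, and the extraction of $R$, $S$ and the invertibility of $D_2=B$ from the full row rank of $\begin{pmatrix}H&B\end{pmatrix}$, the full column rank of $\begin{pmatrix}A\\B\end{pmatrix}$, and $\rk\begin{pmatrix}G-D_1&A\\H&B\end{pmatrix}=r_f-r_e$ is exactly right. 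This is essentially the classical proof of the similarity normal form for the minus order on index-one matrices; if you write it up, just make the rank-additivity-implies-range-disjointness step explicit, since it is the only place where "dimension count" is doing real work.
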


\begin{lemma}[\cite{Baksalary J K2003,Mitra2010juzhenfenjie}]
\label{peiRStar-Def}
Let $E,F\in\mathbb{R}^{m\times n}$,
${\rk}\left( E \right)=r_e$ and
 ${\rk}\left( F \right)=r_f$.
Then the following five conditions are equivalent:
\begin{enumerate}
  \item[{\rm (1)}]
   $E\overset\ast\leqslant F$;
  \item[{\rm (2)}]
$E \leq F$,  $EF^{\rm {T}} =FE^{\rm{ T }}$ and $ F^{\rm{ T }}E=E^{\rm{ T }}F$;
  \item[{\rm (3)}]
$E^\dagger E=E^\dagger F$ and $ EE^\dagger=FE^\dagger $;
  \item[{\rm (4)}]
 $E^{\rm{ T }}E=E^{\rm{ T }}F$ and $EE^{\rm{ T }}=FE^{\rm{ T }}$;
  \item[{\rm (5)}]
There are orthogonal matrices $U$ and $V$ such that
\begin{align}
\label{2.1}
E=U\begin{pmatrix}D_1&O&O\\O&O&O\\O&O&O\end{pmatrix}V^{\rm{ T }}, \
F=U\begin{pmatrix}D_1&O&O\\O&D_2&O\\O&O&O\end{pmatrix}V^{\rm{ T }},
\end{align}
where
$D_1 \in \mathbb{R}^{r_e\times r_e}$
and
$D_2 \in \mathbb{R}^{(r_f-r_e)\times (r_f-r_e)}$
are invertible.
\end{enumerate}
\end{lemma}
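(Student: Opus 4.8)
The plan is to prove the five conditions equivalent by running a single cycle of implications, $(5)\Rightarrow(1)\Rightarrow(3)\Rightarrow(4)\Rightarrow(2)\Rightarrow(5)$, so that the only genuinely constructive step is the one producing the two orthogonal matrices in $(5)$. The tools I would use are the singular value decomposition of $E$ (which makes $E^\dagger$ explicit), the fact that $EE^\dagger$ and $E^\dagger E$ are the orthogonal projectors onto the column space and the row space of $E$, and Lemma \ref{lemma-1}, which converts the rank identity $\rk(F-E)=\rk(F)-\rk(E)$ into the block normal form \eqref{minus}.

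For the easy half, I would obtain $(5)\Rightarrow(1)$ by substituting the block forms \eqref{2.1} together with $E^\dagger=V\,\mathrm{diag}(D_1^{-1},O,O)\,U^{\mathrm T}$ and multiplying out, which yields $E^\dagger E=E^\dagger F=V\,\mathrm{diag}(I,O,O)\,V^{\mathrm T}$ and $EE^\dagger=FE^\dagger=U\,\mathrm{diag}(I,O,O)\,U^{\mathrm T}$. The implication $(1)\Rightarrow(3)$ is immediate from the definition of $\overset{\ast}{\leqslant}$, and $(3)\Rightarrow(4)$ uses only the identities $E^{\mathrm T}EE^\dagger=E^{\mathrm T}$ and $E^\dagger EE^{\mathrm T}=E^{\mathrm T}$ (both consequences of $EE^\dagger E=E$ and the symmetry of $EE^\dagger,E^\dagger E$): from $E^\dagger E=E^\dagger F$ one gets $E^{\mathrm T}E=E^{\mathrm T}EE^\dagger E=E^{\mathrm T}EE^\dagger F=E^{\mathrm T}F$, and symmetrically $EE^{\mathrm T}=FE^{\mathrm T}$ follows from $EE^\dagger=FE^\dagger$. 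For $(4)\Rightarrow(2)$: $E^{\mathrm T}E=E^{\mathrm T}F$ gives $E^{\mathrm T}(F-E)=O$ and $EE^{\mathrm T}=FE^{\mathrm T}$ gives $(F-E)E^{\mathrm T}=O$, so the column spaces and the row spaces of $E$ and of $F-E$ are mutually orthogonal; hence $\rk(F)=\rk(E)+\rk(F-E)$, and Lemma \ref{lemma-1} yields $E\leq F$. The same two identities give $E(F-E)^{\mathrm T}=O=(F-E)E^{\mathrm T}$, so $EF^{\mathrm T}=EE^{\mathrm T}=FE^{\mathrm T}$ and, symmetrically, $F^{\mathrm T}E=E^{\mathrm T}E=E^{\mathrm T}F$, which is the remaining content of $(2)$.

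The one substantive step is $(2)\Rightarrow(5)$. Using $E\leq F$ and Lemma \ref{lemma-1}, I would fix orthogonal $U,V$ putting $E$ and $F$ into the normal form \eqref{minus} with invertible $D_1,D_2$ and blocks $R,S$, and then substitute these into the two symmetry conditions of $(2)$. Comparing the $(2,1)$ blocks: $EF^{\mathrm T}=FE^{\mathrm T}$ forces $D_2SD_1^{\mathrm T}=O$, hence $S=O$ since $D_1$ and $D_2$ are invertible; and $F^{\mathrm T}E=E^{\mathrm T}F$ forces $(RD_2)^{\mathrm T}D_1=O$, hence $R=O$. With $R=S=O$ the form \eqref{minus} collapses to exactly \eqref{2.1}, which is $(5)$. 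The only place demanding care is this final block bookkeeping — keeping straight that the normal form being substituted is precisely the one from Lemma \ref{lemma-1} and that it is the invertibility of $D_1$ and $D_2$ that cancels them; everything else is routine. Since the statement is classical (it is essentially the content of \cite{Baksalary J K2003,Mitra2010juzhenfenjie}), the proof above merely reassembles standard facts.
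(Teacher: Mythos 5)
The paper does not prove this lemma: it is imported verbatim from the cited references (Baksalary--Baksalary--Liu and Mitra--Bhimasankaram--Malik) as background, so there is no in-paper argument to compare yours against. Judged on its own, your cyclic proof $(5)\Rightarrow(1)\Rightarrow(3)\Rightarrow(4)\Rightarrow(2)\Rightarrow(5)$ is correct and is the standard route. The individual steps all check out: the block computation for $(5)\Rightarrow(3)$; the use of $E^{\rm T}EE^{\dagger}=E^{\rm T}$ and $E^{\dagger}EE^{\rm T}=E^{\rm T}$ for $(3)\Rightarrow(4)$; the rank-additivity argument $E^{\rm T}(F-E)=O$, $(F-E)E^{\rm T}=O$ $\Rightarrow$ $\rk(F)=\rk(E)+\rk(F-E)$ feeding into Lemma~\ref{lemma-1} for $(4)\Rightarrow(2)$; and the block bookkeeping in $(2)\Rightarrow(5)$, where the $(2,1)$ and $(1,2)$ blocks of the two symmetry conditions do force $D_2SD_1^{\rm T}=O$ and $D_2^{\rm T}R^{\rm T}D_1=O$, hence $S=O$ and $R=O$ by invertibility of $D_1,D_2$. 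One small presentational caveat: the paper never states which condition defines $E\overset{\ast}{\leqslant}F$, and your chain implicitly takes (3) as the definition (your ``$(5)\Rightarrow(1)$'' computation actually establishes (3), and you declare $(1)\Rightarrow(3)$ definitional). If one instead adopts Drazin's original definition via (4), the step labelled ``$(1)\Rightarrow(3)$ is immediate'' would not be, but your argument still closes the loop because you independently establish $(5)\Rightarrow(3)\Rightarrow(4)\Rightarrow(2)\Rightarrow(5)$, so $(3)\Leftrightarrow(4)$ is proved either way; it would be worth one sentence to make that explicit.
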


\begin{lemma}
[\cite{Mitra1987-group-tichu,Mitra2010juzhenfenjie}]
\label{bbbbbb}
Let $E,F \in {\mathbb{R}}_n^{\tiny \mbox{\rm CM}}$,
 $ {\rk}\left( E \right)=r_e$ and ${\rk}\left( F \right)=r_f$.
 Then the following are equivalent:
 \begin{enumerate}
   \item[{\rm (1)}] $E\overset\#\leq F$;
   \item[{\rm (2)}] $E \leq F$ and  $EF=FE$;
   \item[{\rm (3)}] $E^\#E=E^\#F$ and $EE^\#=FE^\#$;
   \item[{\rm (4)}] $EF=E^2=FE$;
   \item[{\rm (5)}] There is a nonsingular matrix $P \in \mathbb{R}^{n\times n}$ such that
\begin{align}
\label{3.4}
E=P\begin{pmatrix}D_1&O&O\\O&O&O\\O&O&O\end{pmatrix}P^{-1}, \
F=P\begin{pmatrix}D_1&O&O\\O&D_2&O\\O&O&O\end{pmatrix}P^{-1},
\end{align}
 where $D_1\in \mathbb{R}^{r_e\times r_e}$ and $D_2\in \mathbb{R}^{(r_f-r_e)\times (r_f-r_e)}$ are invertible.
 \end{enumerate}
\end{lemma}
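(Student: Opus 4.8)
The natural approach is to anchor the equivalence at the simultaneous canonical form (5) and then run one cycle of implications, say $(5)\Rightarrow(3)\Rightarrow(4)\Rightarrow(2)\Rightarrow(5)$, with $(1)\Leftrightarrow(3)$ being immediate from the very definition of the sharp partial order. First I would dispatch the implications out of (5): if $E=P\,\mathrm{diag}(D_1,O,O)P^{-1}$ and $F=P\,\mathrm{diag}(D_1,D_2,O)P^{-1}$ with $D_1,D_2$ invertible, then $E^{\#}=P\,\mathrm{diag}(D_1^{-1},O,O)P^{-1}$, and one reads off directly $E^{\#}E=P\,\mathrm{diag}(I_{r_e},O,O)P^{-1}=E^{\#}F$ and $EE^{\#}=FE^{\#}$, as well as $EF=P\,\mathrm{diag}(D_1^{2},O,O)P^{-1}=E^{2}=FE$; moreover $F-E=P\,\mathrm{diag}(O,D_2,O)P^{-1}$ has rank $r_f-r_e=\rk(F)-\rk(E)$, so Lemma~\ref{lemma-1} gives $E\leq F$, while $EF=FE$ is visible. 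Hence (5) implies each of (1)--(4) by pure computation.

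For $(3)\Rightarrow(4)$ I would use only the group inverse axioms $EE^{\#}=E^{\#}E$ and $EE^{\#}E=E$, which yield the auxiliary identities $E^{2}E^{\#}=E=E^{\#}E^{2}$: left-multiplying $E^{\#}E=E^{\#}F$ by $E^{2}$ gives $E^{2}=EF$, and right-multiplying $EE^{\#}=FE^{\#}$ by $E^{2}$ gives $E^{2}=FE$, i.e.\ (4). (The converse $(4)\Rightarrow(3)$ is symmetric, pre- and post-multiplying $EF=E^{2}$ and $FE=E^{2}$ by $E^{\#}$ and using $(E^{\#})^{2}E=E^{\#}$.) Then for $(4)\Rightarrow(2)$: commutativity is immediate, and putting $N=F-E$ we get $EN=EF-E^{2}=O$ and $NE=FE-E^{2}=O$; since $R(E)\cap N(E)=\{0\}$ for an index one matrix, $EN=O$ forces $R(E)\cap R(N)=\{0\}$, and $FEE^{\#}=E^{2}E^{\#}=E$ gives $R(E)\subseteq R(F)$, hence $R(E)\oplus R(N)=R(F)$ and $\rk(F-E)=\rk(F)-\rk(E)$; Lemma~\ref{lemma-1} then gives $E\leq F$, so (2) holds.

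The remaining implication $(2)\Rightarrow(5)$ is the one I expect to be the crux. From $E\leq F$ together with $\Ind(E)=\Ind(F)=1$, Lemma~\ref{xiaotuilun} produces a nonsingular $P$ putting $E$ and $F$ into the minus-order normal form
\[
E=P\begin{pmatrix}D_1&O&O\\O&O&O\\O&O&O\end{pmatrix}P^{-1},\qquad
F=P\begin{pmatrix}D_1+RD_2S&RD_2&O\\D_2S&D_2&O\\O&O&O\end{pmatrix}P^{-1},
\]
with $D_1,D_2$ invertible. Computing $EF$ and $FE$ in these coordinates and imposing $EF=FE$, the $(1,2)$- and $(2,1)$-blocks yield $D_1RD_2=O$ and $D_2SD_1=O$, so $R=O$ and $S=O$ by invertibility of $D_1,D_2$; then $F$ collapses to $P\,\mathrm{diag}(D_1,D_2,O)P^{-1}$, which is exactly \eqref{3.4}. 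This closes the cycle. The only genuinely delicate point in the argument is this last step, namely that the hypothesis $EF=FE$ annihilates the coupling blocks $R,S$ permitted by the minus order; everything else is routine manipulation of the canonical decompositions and of the group inverse identities.
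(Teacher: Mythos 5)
The paper does not prove this lemma at all: it is quoted verbatim from the cited references (Mitra 1987; Mitra, Bhimasankaram and Malik 2010), so there is no in-paper argument to compare yours against. Judged on its own, your proof is correct and complete. The cycle $(5)\Rightarrow(3)\Rightarrow(4)\Rightarrow(2)\Rightarrow(5)$ together with $(1)\Leftrightarrow(3)$ as the definition covers all five conditions; the group-inverse manipulations in $(3)\Leftrightarrow(4)$ are sound (the identities $E^{2}E^{\#}=E=E^{\#}E^{2}$ do follow from the defining equations of $E^{\#}$); the rank-subtractivity argument in $(4)\Rightarrow(2)$ via $R(F)=R(E)\oplus R(F-E)$ is valid and correctly uses $R(E)\cap N(E)=\{0\}$ for an index-one matrix; and the reduction of the coupling blocks $R,S$ to zero in $(2)\Rightarrow(5)$ from the $(1,2)$- and $(2,1)$-blocks of $EF=FE$ is exactly the right computation in the normal form supplied by Lemma~\ref{xiaotuilun}. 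One cosmetic remark: the hypothesis $E,F\in\mathbb{R}_n^{\tiny \mbox{\rm CM}}$ allows index $0$ (nonsingular matrices), in which case some blocks in the canonical forms are empty, but your argument degenerates gracefully there, so no repair is needed.
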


\begin{lemma}[\cite{huangpei}]
\label{PEIDSPO-Char-2-Th}
Let $\widehat  E=E+\varepsilon E_0\in\mathbb{D}^{m\times n}$,
$\widehat  F=F+\varepsilon F_0\in\mathbb{D}^{m\times n}$,
${\rm \rk}\left( E \right)=r_e$, ${\rm \rk}\left( F \right)=r_f$
and the DMPGIs of $\widehat  E$ and $\widehat F$ exist.
Then the following are equivalent:
 \begin{enumerate}
   \item[{\rm (1)}]  $\widehat E\overset{\tiny\mbox{\rm  D\!-}\ast}\leq\widehat F$;
   \item[{\rm (2)}] $\widehat E^{\dag}\widehat E=\widehat E^{\dag}\widehat  F$, \
                    $\widehat E\widehat E^{\dag}=\widehat F\widehat E^{\dag}$;
   \item[{\rm (3)}] $\widehat E^{\rm{ T }}\widehat E=\widehat E^{\rm{ T }}\widehat  F , \
  \widehat E\widehat E^{\rm{ T }}=\widehat F\widehat E^{\rm{ T }}$;
   \item[{\rm (4)}] There are orthogonal matrices $U$ and $V$
such that
{\small
\begin{align}
\label{PEIDSPO-Char-2}
\left\{\begin{array}{l}\widehat E
=U\begin{pmatrix}
D_1&O&O\\
O&O&O\\
O&O&O\end{pmatrix}V^{\rm{ T }}
+\varepsilon U
\begin{pmatrix}
E_1&E_2&E_3\\
E_4&O&O\\
E_7&O&O
\end{pmatrix}V^{\rm{ T }},
\\
\widehat F=
U\begin{pmatrix}
 D_1&O&O\\
O& D_2&O\\
O&O&O
\end{pmatrix}
V^{\rm{ T }}
+\varepsilon U\begin{pmatrix}
E_1&E_2-{ D_1^{-1}}{  E_4^{\rm {T}}D_2}&E_3\\
E_4-D_2{E_2^{\rm {T}}}{ D_1^{-1}}&F_5&F_6\\
E_7&F_8&O
\end{pmatrix}V^{\rm{ T }},
\end{array}\right.
\end{align}}
   where $D_1\in \mathbb{R}^{r_e\times r_e}$
   and
   $D_2\in \mathbb{R}^{(r_f-r_e)\times (r_f-r_e)}$ are  invertible.
 \end{enumerate}
\end{lemma}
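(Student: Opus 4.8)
The overall plan is: $(1)\Leftrightarrow(3)$ is immediate, since condition (3) is literally the definition of $\widehat E\overset{\tiny\mbox{\rm D\!-}\ast}\leq\widehat F$; $(2)\Leftrightarrow(3)$ follows by formal manipulation with the Penrose equations \eqref{DMPGI-Def}; and $(3)\Leftrightarrow(4)$ is obtained by separating standard and dual parts and invoking Lemma \ref{peiRStar-Def} on the standard parts. For $(2)\Leftrightarrow(3)$ I would use only that $\widehat E^{\dag}$ satisfies \eqref{DMPGI-Def}, which yields the auxiliary identities $\widehat E^{\rm T}\widehat E\widehat E^{\dag}=\widehat E^{\rm T}$, $\widehat E^{\dag}\widehat E\widehat E^{\dag}=\widehat E^{\dag}$ and $(\widehat E\widehat E^{\dag})^{\rm T}=\widehat E\widehat E^{\dag}$ together with their transposes. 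Left-multiplying $\widehat E^{\rm T}\widehat E=\widehat E^{\rm T}\widehat F$ by $\widehat E^{\dag}(\widehat E^{\dag})^{\rm T}$ and simplifying gives $\widehat E^{\dag}\widehat E=\widehat E^{\dag}\widehat F$; left-multiplying $\widehat E^{\dag}\widehat E=\widehat E^{\dag}\widehat F$ by $\widehat E^{\rm T}\widehat E$ and simplifying recovers $\widehat E^{\rm T}\widehat E=\widehat E^{\rm T}\widehat F$; the pair $\widehat E\widehat E^{\rm T}=\widehat F\widehat E^{\rm T}\Leftrightarrow\widehat E\widehat E^{\dag}=\widehat F\widehat E^{\dag}$ is handled symmetrically by right multiplication. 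No block structure is needed here.

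For $(4)\Rightarrow(3)$ I would substitute the two displayed block forms into $\widehat E^{\rm T}\widehat E$, $\widehat E^{\rm T}\widehat F$, $\widehat E\widehat E^{\rm T}$ and $\widehat F\widehat E^{\rm T}$ and compare standard and dual parts separately. The standard parts agree because of the $E\overset{*}\leq F$ block pattern of \eqref{2.1}, and the dual parts agree precisely because the correction terms $-D_1^{-1}E_4^{\rm T}D_2$ and $-D_2E_2^{\rm T}D_1^{-1}$ in the dual part of $\widehat F$ are exactly what is needed to cancel the cross terms produced by the off-diagonal blocks $E_2,E_4$ of the dual part of $\widehat E$; this direction is a routine verification.

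The substantive direction is $(3)\Rightarrow(4)$. First split each dual-matrix equation into standard and dual parts: $\widehat E^{\rm T}\widehat E=\widehat E^{\rm T}\widehat F$ becomes $E^{\rm T}E=E^{\rm T}F$ and $E^{\rm T}E_0+E_0^{\rm T}E=E^{\rm T}F_0+E_0^{\rm T}F$, and $\widehat E\widehat E^{\rm T}=\widehat F\widehat E^{\rm T}$ becomes $EE^{\rm T}=FE^{\rm T}$ and $E_0E^{\rm T}+EE_0^{\rm T}=F_0E^{\rm T}+FE_0^{\rm T}$. The standard parts are exactly condition (4) of Lemma \ref{peiRStar-Def}, so $E\overset{*}\leq F$, and by Lemma \ref{peiRStar-Def}(5) there are orthogonal $U,V$ realizing \eqref{2.1} with invertible (and, after refinement, diagonal) $D_1,D_2$. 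Write $E_0=U(E_{ij})V^{\rm T}$ and $F_0=U(F_{ij})V^{\rm T}$ in the induced $3\times3$ block partition with block sizes $r_e,\,r_f-r_e,\,\cdot$, abbreviating the blocks $E_1,\dots,E_9$ and $F_1,\dots,F_9$. The stated criterion for the existence of $\widehat E^{\dag}$, namely $(I_m-EE^{\dag})E_0(I_n-E^{\dag}E)=O$, forces $E_5=E_6=E_8=E_9=O$, and the analogous criterion for $\widehat F^{\dag}$ forces $F_9=O$. Substituting these parametrizations into the two dual-part equations and reading off blocks produces a small linear system whose solution is $F_1=E_1$, $F_3=E_3$, $F_7=E_7$, $F_2=E_2-D_1^{-1}E_4^{\rm T}D_2$ and $F_4=E_4-D_2E_2^{\rm T}D_1^{-1}$, with $F_5,F_6,F_8$ free; this is precisely \eqref{PEIDSPO-Char-2}.

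The main obstacle is the block bookkeeping in that last step: one must track which of the nine blocks of $E_0$ and $F_0$ survive, match each block equation to the correct position, and verify that the correction terms emerge with the correct one-sided factors of $D_1^{-1}$ and $D_2$ — this is where choosing $D_1,D_2$ symmetric (the diagonal singular-value form available in Lemma \ref{peiRStar-Def}(5)) keeps the transposes from proliferating. Everything else is mechanical once the canonical form of the standard parts is in hand.
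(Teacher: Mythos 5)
This statement is quoted from \cite{huangpei} and the paper gives no proof of it, so there is nothing internal to compare against; judged on its own, your proposal is sound and would serve as a correct reconstruction. The skeleton $(1)\Leftrightarrow(3)$ by definition, $(2)\Leftrightarrow(3)$ by formal use of the dual Penrose equations (your multipliers $\widehat E^{\dag}(\widehat E^{\dag})^{\rm T}$ and $\widehat E^{\rm T}\widehat E$ do work, since the identities $\widehat E^{\rm T}\widehat E\widehat E^{\dag}=\widehat E^{\rm T}$ and $\widehat E^{\dag}(\widehat E^{\dag})^{\rm T}\widehat E^{\rm T}=\widehat E^{\dag}\widehat E$ hold verbatim for dual matrices once $\widehat E^{\dag}$ exists), and $(3)\Leftrightarrow(4)$ by splitting standard and dual parts, is the natural route, and your block bookkeeping in $(3)\Rightarrow(4)$ (killing $E_5,E_6,E_8,E_9$ and $F_9$ via the DMPGI existence criterion, then solving the two dual-part equations for $F_1,F_2,F_3,F_4,F_7$) reproduces \eqref{PEIDSPO-Char-2} exactly. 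The one point you must make explicit rather than parenthetical is the symmetry of $D_1$: as literally stated, Lemma \ref{peiRStar-Def}(5) only asserts that $D_1,D_2$ are invertible, and with a non-symmetric $D_1$ the $(1,2)$ block of the dual part of $\widehat E^{\rm T}\widehat F-\widehat E^{\rm T}\widehat E$ computes to $\left(I-D_1^{\rm T}D_1^{-1}\right)E_4^{\rm T}D_2$, which does not vanish; so both $(4)\Rightarrow(3)$ and the cancellation pattern in $(3)\Rightarrow(4)$ genuinely require taking $U,V$ from a simultaneous singular value decomposition so that $D_1$ and $D_2$ are diagonal with positive entries. You noticed this, but it is a correctness requirement, not merely a convenience for keeping transposes tidy, and the write-up should fix that choice at the outset.
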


  \begin{lemma}[\cite{huangpei}]
 \label{peiDPSO-Char-1-Th}
 Let $\widehat  E=E+\varepsilon E_0\in\mathbb{D}^{m\times n}$,
$\widehat  F=F+\varepsilon F_0\in\mathbb{D}^{m\times n}$,
${\rm \rk}\left( E \right)=r_e$, ${\rm \rk}\left( F \right)=r_f$
and the DMPGIs of $\widehat  E$ and $\widehat  F$ exist.
Then the following are equivalent:
 \begin{enumerate}
   \item[{\rm (1)}]  $\widehat E\overset{\tiny\mbox{\rm  P\!-}{*}}\leq\widehat F$;
   \item[{\rm (2)}] $\widehat E^p\widehat E=\widehat E^p\widehat F$, \
                     $\widehat E\widehat E^p=\widehat F\widehat E^p$;
   \item[{\rm (3)}] $E^{\rm T} E =  E^{\rm{ T }}F$, \
                    $E E^{\rm{ T }} =F E^{\rm{ T }}$,
                  \\
                   $E^{\rm{ T }}E_0= E^{\rm{ T }}F_0$, \
                   $E_0E^{\rm{ T }}=F_0E^{\rm{ T }}$;
   \item[{\rm (4)}] There are orthogonal matrices $U$ and $V$   such that
\begin{align}
 \label{peiDPSO-Char-1}
\left\{\begin{array}{l}\widehat E
=U\begin{pmatrix}    D_1&O&O\\O&O&O\\O&O&O\end{pmatrix}V^{\rm{ T }}
+\varepsilon U\begin{pmatrix}
E_1&E_2&E_3\\E_4&O&O\\E_7&O&O\end{pmatrix}V^{\rm{ T }},\\
\widehat F
=U\begin{pmatrix}    D_1&O&O\\O&    D_2&O\\O&O&O
\end{pmatrix}V^{\rm{ T }}
+
\varepsilon U\begin{pmatrix}
E_1&E_2&E_3\\E_4&F_5&F_6\\E_7&F_8&O
\end{pmatrix}V^{\rm{ T }},
\end{array}\right.
 \end{align}
 where $D_1\in \mathbb{R}^{r_e\times r_e}$ and $D_2\in \mathbb{R}^{(r_f-r_e)\times (r_f-r_e)}$ are invertible.
    \end{enumerate}
\end{lemma}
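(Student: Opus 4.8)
The plan is to prove the equivalence of the four conditions in Lemma~\ref{peiDPSO-Char-1-Th} by the cycle $(1)\Rightarrow(2)\Rightarrow(3)\Rightarrow(4)\Rightarrow(1)$, following the pattern already established for the D-star partial order in Lemma~\ref{PEIDSPO-Char-2-Th}. Recall that $\widehat E^p=E^\dagger-\varepsilon E^\dagger E_0 E^\dagger$, so the implication $(1)\Leftrightarrow(2)$ is essentially definitional: condition (1) is $\widehat E^p\widehat E=\widehat E^p\widehat F$, $\widehat E\widehat E^p=\widehat F\widehat E^p$, which is literally condition (2). For $(2)\Rightarrow(3)$, I would expand $\widehat E^p\widehat E=\widehat E^p\widehat F$ into standard and dual parts: the standard part gives $E^\dagger E=E^\dagger F$, and the dual part gives $-E^\dagger E_0E^\dagger E+E^\dagger E_0=-E^\dagger E_0 E^\dagger F+E^\dagger F_0$. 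Using $E^\dagger E=E^\dagger F$ already obtained, the terms $-E^\dagger E_0 E^\dagger E$ and $-E^\dagger E_0 E^\dagger F$ cancel, leaving $E^\dagger E_0=E^\dagger F_0$. Multiplying $E^\dagger E=E^\dagger F$ on the left by $E$ and $E^\dagger E_0=E^\dagger F_0$ on the left by $E$ converts these (via $EE^\dagger$ being the orthogonal projector onto the range of $E$, together with the fact that the relevant columns already lie in that range) into the $E^{\rm T}$-form statements $E^{\rm T}E=E^{\rm T}F$ and $E^{\rm T}E_0=E^{\rm T}F_0$; symmetrically from $\widehat E\widehat E^p=\widehat F\widehat E^p$ one extracts $EE^{\rm T}=FE^{\rm T}$ and $E_0E^{\rm T}=F_0E^{\rm T}$.

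For $(3)\Rightarrow(4)$, I would start from the four real-matrix equations. The two standard-part equations $E^{\rm T}E=E^{\rm T}F$, $EE^{\rm T}=FE^{\rm T}$ are exactly condition (4) of Lemma~\ref{peiRStar-Def}, so $E\overset{*}{\leq}F$, and Lemma~\ref{peiRStar-Def}(5) provides orthogonal $U,V$ with $E=U\,\mathrm{diag}(D_1,O,O)\,V^{\rm T}$ and $F=U\,\mathrm{diag}(D_1,D_2,O)\,V^{\rm T}$ in the stated block form. I would then write $E_0=U(E_{ij})V^{\rm T}$ and $F_0=U(F_{ij})V^{\rm T}$ as $3\times 3$ block matrices conformable with the partition, and translate the two dual-part equations $E^{\rm T}E_0=E^{\rm T}F_0$ and $E_0E^{\rm T}=F_0E^{\rm T}$ into block equations. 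Since $E^{\rm T}=V\,\mathrm{diag}(D_1^{\rm T},O,O)\,U^{\rm T}$, the equation $E^{\rm T}E_0=E^{\rm T}F_0$ picks out only the first block-row: $D_1^{\rm T}(E_{11},E_{12},E_{13})=D_1^{\rm T}(F_{11},F_{12},F_{13})$, and invertibility of $D_1$ forces $E_{1j}=F_{1j}$ for $j=1,2,3$. Symmetrically $E_0E^{\rm T}=F_0E^{\rm T}$ forces the first block-column to agree: $E_{i1}=F_{i1}$ for $i=1,2,3$. The remaining blocks $F_{22},F_{23},F_{32},F_{33}$ are unconstrained except that $F_{33}$ must vanish to keep $F_0$'s zero pattern consistent — actually that zero is a matter of normalizing/absorbing, so I'd either argue it directly or note $F_{33}$ can be taken $O$ by the residual orthogonal freedom; this yields exactly the block form \eqref{peiDPSO-Char-1} with $E_4,E_7,E_1,E_2,E_3$ shared and $F_5=F_{22}$, $F_6=F_{23}$, $F_8=F_{32}$ free.

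For $(4)\Rightarrow(1)$, I would simply substitute the explicit block forms \eqref{peiDPSO-Char-1} into $\widehat E^p\widehat E$, $\widehat E^p\widehat F$, $\widehat E\widehat E^p$, $\widehat F\widehat E^p$ and verify equality by direct block computation, using $E^\dagger=V\,\mathrm{diag}(D_1^{-1},O,O)\,U^{\rm T}$ and $E^p=E^\dagger-\varepsilon E^\dagger E_0E^\dagger$; the shared first block-row and block-column of the dual parts are precisely what makes the products coincide, and the $D_2$-block of $F$ is killed on both sides because $E^\dagger$ annihilates it. The main obstacle I anticipate is the $(3)\Rightarrow(4)$ step, specifically bookkeeping the interaction between the orthogonal reduction that puts $(E,F)$ into star-normal form and the constraints the dual-part equations impose on $E_0,F_0$ — one must be careful that the \emph{same} pair $U,V$ diagonalizing the standard parts can be used throughout, and that no hidden constraint on the free blocks $F_5,F_6,F_8$ is lost or spuriously introduced; the zero in the $(3,3)$ block of the dual part of $\widehat F$ deserves an explicit word. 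The other implications are routine algebra with the idempotents $E^\dagger E$, $EE^\dagger$.
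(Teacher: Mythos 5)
The paper states this lemma without proof (it is quoted from \cite{huangpei}), so there is no in-paper argument to compare against; I can only assess your proposal on its own terms. Your cycle $(1)\Rightarrow(2)\Rightarrow(3)\Rightarrow(4)\Rightarrow(1)$ is a sensible architecture: $(1)\Leftrightarrow(2)$ is indeed definitional, your expansion in $(2)\Rightarrow(3)$ is correct (the cleanest way to pass from $E^\dagger E=E^\dagger F$ to $E^{\rm T}E=E^{\rm T}F$ is to premultiply by $E^{\rm T}E$ and use $E^{\rm T}EE^\dagger=E^{\rm T}$, rather than premultiplying by $E$, but the idea is the same), and the block computation in $(4)\Rightarrow(1)$ goes through.

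The genuine gap is in $(3)\Rightarrow(4)$, and it is exactly the point you flagged and then resolved incorrectly. The zero blocks in the dual parts of (\ref{peiDPSO-Char-1}) --- the $(2,2)$, $(2,3)$, $(3,2)$, $(3,3)$ blocks of $E_0$ and the $(3,3)$ block of $F_0$ --- do not follow from condition (3), and they cannot be produced by ``residual orthogonal freedom'': the only freedom left after fixing the star normal form (\ref{2.1}) is conjugation by block-diagonal orthogonal matrices, which sends $F_{33}$ to $U_3^{\rm T}F_{33}V_3$ and so cannot annihilate a nonzero block. These zeros come from the standing hypothesis that the DMPGIs of $\widehat E$ and $\widehat F$ exist: by the criterion $\left(I_m-EE^{\dag}\right)E_0\left(I_n-E^{\dag}E\right)=O$ (equivalently Lemma \ref{Wang2021MAMT-Th2.1}), with $EE^{\dag}=U\,\mathrm{diag}(I,O,O)\,U^{\rm T}$ and $E^{\dag}E=V\,\mathrm{diag}(I,O,O)\,V^{\rm T}$ one gets $E_{22}=E_{23}=E_{32}=E_{33}=O$, and the same criterion applied to $\widehat F$, whose projectors are $\mathrm{diag}(I,I,O)$, gives $F_{33}=O$. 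Your write-up never invokes the DMPGI-existence hypothesis in this step, so as written it would leave four unconstrained extra blocks in the dual part of $\widehat E$ and an unjustified zero in $F_0$; once you insert this one ingredient the step closes and the rest of the argument is correct.
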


From Lemma \ref{PEIDSPO-Char-2-Th} and Lemma \ref{peiDPSO-Char-1-Th},
it is easy to see that
the D-star partial order and the P-star partial order are different.
Interestingly, if $\widehat E^{\dag}=\widehat E^p$,
then the D-star partial order coincides with the P-star partial order.

\begin{lemma}[\cite{tianhe}]
\label{TIANHEDSPO-Char-2-Th}
 Let $\widehat  E=E+\varepsilon E_0$,
$\widehat  F=F+\varepsilon F_0\in\mathbb{D}_n^{\tiny \mbox{\rm CM}}$,
${\rm \rk}\left( E \right)=r_e$ and ${\rm \rk}\left( F \right)=r_f$.
 Then the following are equivalent:
 \begin{enumerate}
   \item[{\rm (1)}]  $\widehat E\overset{\tiny\mbox{\rm  D \!-}\#}\leq\widehat F$;
   \item[{\rm (2)}] $\widehat E^{\#}\widehat E=\widehat E^{\#}\widehat F$,
                  \ $\widehat E\widehat E^{\#}=\widehat F\widehat E^{\#}$;
   \item[{\rm (3)}] $EF=E^2=FE$,\ $EF_0+E_0F=E_0E+EE_0=FE_0+F_0E$;
   \item[{\rm (4)}]There is
 a nonsingular matrix $P$ such that
{\small
\begin{align}
\label{pei-DSPO-Char-2}
\left\{\begin{array}{l}\widehat E
=P\begin{pmatrix}
D_1&O&O\\
O&O&O\\
O&O&O\end{pmatrix}P^{-1}
+\varepsilon P
\begin{pmatrix}
E_1&E_2&E_3\\
E_4&O&O\\
E_7&O&O
\end{pmatrix}P^{-1},\\
\widehat F=
P\begin{pmatrix}D_1&O&O\\
O&D_2&O\\
O&O&O
\end{pmatrix}
P^{-1}
+\varepsilon P\begin{pmatrix}E_1
&E_2-{D_1^{-1}}E_2D_2&E_3\\
E_4-D_2E_4D_1^{-1}&
F_5&F_6\\E_7&F_8&O\end{pmatrix}P^{-1},
\end{array}\right.
\end{align}}
where
$D_1\in \mathbb{R}^{r_e\times r_e}$
and
$D_2\in \mathbb{R}^{(r_f-r_e)\times (r_f-r_e)}$ are invertible.
\end{enumerate}
\end{lemma}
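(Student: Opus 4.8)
The plan is as follows. The equivalence (1)$\Leftrightarrow$(2) is immediate from the definition of the D-sharp partial order, since $\widehat E,\widehat F\in\mathbb{D}_n^{\tiny \mbox{\rm CM}}$ guarantees that $\widehat E^{\#}$ and $\widehat F^{\#}$ exist. For the remaining equivalences the basic tool is to split each dual identity into its standard and dual parts. Writing $\widehat E^{\#}=E^{\#}+\varepsilon G$, the standard parts of the two equations in (2) are $E^{\#}E=E^{\#}F$ and $EE^{\#}=FE^{\#}$, which by Lemma~\ref{bbbbbb} amount to $E\overset{\#}{\leq}F$, hence $EF=E^2=FE$; from the canonical form of Lemma~\ref{bbbbbb}(5) one also records the handy relations $E^{\#}F=E^{\#}E$, $FE^{\#}=EE^{\#}$ and $EE^{\#}F=FE^{\#}E=E$.

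The technical heart is the dual part, and the cleanest route I would take is to pass to the faithful algebra embedding $\widehat X=X+\varepsilon X_0\mapsto\check X=\begin{pmatrix}X&O\\X_0&X\end{pmatrix}\in\mathbb{R}^{2n\times2n}$. Under it, dual multiplication becomes ordinary multiplication, $\widehat X\in\mathbb{D}_n^{\tiny \mbox{\rm CM}}$ corresponds to $\Ind(\check X)=1$, and --- because the group inverse of an index-one matrix is a polynomial in that matrix and therefore lies in the (unital) subalgebra of block lower-triangular Toeplitz matrices --- the DGGI corresponds to the group inverse, so that the lift of $\widehat E^{\#}$ equals $(\check E)^{\#}$. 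Hence (2) is equivalent to $(\check E)^{\#}\check E=(\check E)^{\#}\check F$ and $\check E(\check E)^{\#}=\check F(\check E)^{\#}$, which by Lemma~\ref{bbbbbb} (for $\check E,\check F$ of index one) is equivalent to $\check E\check F=\check E^{2}=\check F\check E$; comparing the $(1,1)$- and $(2,1)$-blocks of this identity gives exactly $EF=E^2=FE$ together with $EF_0+E_0F=E_0E+EE_0=FE_0+F_0E$, i.e. (3). If one prefers to stay inside $\mathbb{D}$, the same equivalence can be obtained by hand: (3)$\Rightarrow$(2) by inserting $\widehat E^{\#}=E^{\#}+\varepsilon G$ with the explicit DGGI $G$ of \cite{zhong2022} into $\widehat E^{\#}(\widehat E-\widehat F)$ and $(\widehat E-\widehat F)\widehat E^{\#}$ and collapsing everything with the relations recorded above and the chain of equalities in (3) multiplied by $(E^{\#})^2$ on the appropriate side; the reverse is smoother via the canonical form produced next.

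For (3)$\Rightarrow$(4): from $EF=E^2=FE$ and Lemma~\ref{bbbbbb}(5) there is a nonsingular $P$ putting $E$ and $F$ into the form \eqref{3.4}. Partition $P^{-1}E_0P$ and $P^{-1}F_0P$ conformally into $3\times3$ blocks. In the basis $P$ one has $EE^{\#}=E^{\#}E=P\begin{pmatrix}I&O&O\\O&O&O\\O&O&O\end{pmatrix}P^{-1}$, so the membership $\widehat E\in\mathbb{D}_n^{\tiny \mbox{\rm CM}}$, equivalently $(I_n-EE^{\#})E_0(I_n-E^{\#}E)=0$, kills the $(2,2)$, $(2,3)$, $(3,2)$, $(3,3)$ blocks of $P^{-1}E_0P$ --- this is the displayed shape of the dual part of $\widehat E$ in \eqref{pei-DSPO-Char-2} --- and similarly $\widehat F\in\mathbb{D}_n^{\tiny \mbox{\rm CM}}$ kills the $(3,3)$ block of $P^{-1}F_0P$. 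Expanding $EF_0+E_0F=E_0E+EE_0=FE_0+F_0E$ block by block and using that $D_1,D_2$ are invertible then pins the $(1,1)$, $(1,2)$, $(1,3)$, $(2,1)$, $(3,1)$ blocks of $P^{-1}F_0P$ to $E_1$, $E_2-D_1^{-1}E_2D_2$, $E_3$, $E_4-D_2E_4D_1^{-1}$, $E_7$ respectively, leaving the $(2,2)$, $(2,3)$, $(3,2)$ blocks free; this is precisely \eqref{pei-DSPO-Char-2}.

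Finally (4)$\Rightarrow$(2) is a verification: in the basis $P$ the DGGI $\widehat E^{\#}$ is computed (again block-Toeplitz, obtained from the formula of \cite{zhong2022} or by checking the three DGGI axioms), and a block multiplication confirms $\widehat E^{\#}\widehat E=\widehat E^{\#}\widehat F$ and $\widehat E\widehat E^{\#}=\widehat F\widehat E^{\#}$; alternatively one verifies (3) directly from \eqref{pei-DSPO-Char-2} and quotes (3)$\Rightarrow$(2). The step I expect to be the main obstacle is the dual-part bookkeeping --- showing that the two dual-part matrix equations in (2) are equivalent to the single chain $EF_0+E_0F=E_0E+EE_0=FE_0+F_0E$ in (3); the embedding $\widehat X\mapsto\check X$ is exactly what makes this transparent, reducing it to the known real sharp-order characterization of Lemma~\ref{bbbbbb}, while the $\mathbb{D}_n^{\tiny \mbox{\rm CM}}$ hypotheses are what keep the group inverse inside the block-triangular algebra.
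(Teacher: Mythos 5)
This statement is Lemma~\ref{TIANHEDSPO-Char-2-Th}, which the paper imports from \cite{tianhe} with no proof of its own, so there is nothing in the text to compare your argument against; judged on its own terms, your proof is correct and complete in outline. The equivalence (1)$\Leftrightarrow$(2) is indeed definitional, and your central device --- the faithful embedding $\widehat X=X+\varepsilon X_0\mapsto\check X=\left(\begin{smallmatrix}X&O\\X_0&X\end{smallmatrix}\right)$, under which $\widehat X\in\mathbb{D}_n^{\tiny \mbox{\rm CM}}$ corresponds to ${\rm Ind}(\check X)=1$ and the DGGI to $(\check X)^{\#}$ because the group inverse of an index-one matrix is a polynomial in it --- cleanly reduces (2)$\Leftrightarrow$(3) to the real characterization of Lemma~\ref{bbbbbb} applied to $\check E,\check F$, with the $(1,1)$ and $(2,1)$ blocks of $\check E\check F=\check E^{2}=\check F\check E$ yielding exactly the two chains in (3); I checked the block bookkeeping in (3)$\Rightarrow$(4) (membership in $\mathbb{D}_n^{\tiny \mbox{\rm CM}}$ annihilating the trailing blocks of $P^{-1}E_0P$ and the $(3,3)$ block of $P^{-1}F_0P$, and the chain of equalities forcing $F_{12}=E_2-D_1^{-1}E_2D_2$, $F_{21}=E_4-D_2E_4D_1^{-1}$, etc.) and it comes out as claimed. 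By contrast, the source \cite{tianhe} and the analogous arguments this paper does write out (e.g.\ Theorems~\ref{fenjie-the} and \ref{dms-fenjie}) work entirely inside $\mathbb{D}$ by splitting each identity into standard and dual parts and manipulating bordered-rank conditions; your lift to $\mathbb{R}^{2n\times 2n}$ buys a one-line derivation of the otherwise fiddly dual-part equivalence at the cost of needing the (true, but worth a sentence of justification via the rank computation $\rk(\check X)=2\rk(X)+\rk\bigl((I-XX^{\#})X_0(I-X^{\#}X)\bigr)$) identification of $\mathbb{D}_n^{\tiny \mbox{\rm CM}}$ with $\{\widehat X:{\rm Ind}(\check X)=1\}$.
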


\begin{lemma}[\cite{tianhe}]
\label{tianheDPSPO-Char-1-Th}
  Let $\widehat  E=E+\varepsilon E_0$,
$\widehat  F=F+\varepsilon F_0\in\mathbb{D}_n^{\tiny \mbox{\rm CM}}$,
  ${\rm \rk}\left( E \right)=r_e$ and ${\rm \rk}\left( F \right)=r_f.$
Then the following are equivalent:
 \begin{enumerate}
   \item[{\rm (1)}] $\widehat E\overset{\tiny\mbox{\rm  G \!-}\#}\leq\widehat F$;
   \item[{\rm (2)}] $\widehat E^g\widehat E=\widehat E^g\widehat F$, \
                    $\widehat E\widehat E^g=\widehat F\widehat E^g$;
   \item[{\rm (3)}] $EF=E^2=FE$,\ $EE_0=EF_0$,\ $E_0E=F_0E$;
   \item[{\rm (4)}] There is a nonsingular matrix $P$ such that
\begin{align}
\label{DPSPO-Char-1-1}
\left\{\begin{array}{l}
\widehat E
=
P\begin{pmatrix}
D_1&O&O\\
O&O&O\\
O&O&O\end{pmatrix}
P^{-1}
+\varepsilon P
\begin{pmatrix}
E_1&E_2&E_3\\E_4&O&O\\E_7&O&O
\end{pmatrix}P^{-1},\\
\widehat F
=
P\begin{pmatrix}
  D_1&O&O\\
  O& D_2&O\\
  O&O&O\end{pmatrix}P^{-1}
+\varepsilon P\begin{pmatrix}
E_1&E_2&E_3\\
E_4& F_5&F_6\\
E_7&F_8&O
\end{pmatrix}P^{-1},
\end{array}\right.
\end{align}
where
$D_1\in \mathbb{R}^{r_e\times r_e}$
and
 $D_2\in \mathbb{R}^{(r_f-r_e)\times (r_f-r_e)}$ are invertible.
\end{enumerate}
\end{lemma}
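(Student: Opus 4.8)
The equivalence (1) $\Leftrightarrow$ (2) is immediate: condition (2) is literally the defining relation of the G-sharp partial order recalled in the introduction. So the plan is to prove (2) $\Leftrightarrow$ (3) by a direct expansion into standard and dual parts, and (3) $\Leftrightarrow$ (4) through the real canonical form of Lemma \ref{bbbbbb} together with the membership criterion for $\mathbb{D}_n^{\tiny \mbox{\rm CM}}$ recalled in the introduction.

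For (2) $\Leftrightarrow$ (3): substitute $\widehat E^g=E^{\#}-\varepsilon E^{\#}E_0E^{\#}$ and multiply out $\widehat E^g\widehat E=\widehat E^g\widehat F$. The standard parts give $E^{\#}E=E^{\#}F$, and the dual parts give $E^{\#}E_0-E^{\#}E_0E^{\#}E=E^{\#}F_0-E^{\#}E_0E^{\#}F$; treating $\widehat E\widehat E^g=\widehat F\widehat E^g$ the same way yields $EE^{\#}=FE^{\#}$ on the standard level together with a mirror-image dual-part identity. By Lemma \ref{bbbbbb}, the pair $E^{\#}E=E^{\#}F$, $EE^{\#}=FE^{\#}$ is equivalent to $EF=E^2=FE$. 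Under these relations one substitutes $E^{\#}F=E^{\#}E=:\Pi$, the spectral idempotent, which by index one satisfies $\Pi E=E\Pi=E$, $\Pi E^{\#}=E^{\#}\Pi=E^{\#}$ and $E=E^2E^{\#}=E^{\#}E^2$; the first dual-part equation then collapses to $E^{\#}E_0=E^{\#}F_0$, and multiplying on the left by $E^2E^{\#}=E$ and on the left by $E^{\#}$ shows this is equivalent to $EE_0=EF_0$. Symmetrically the other dual-part identity gives $E_0E=F_0E$. Reversing the steps recovers (2) from (3).

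For (3) $\Rightarrow$ (4): since $EF=E^2=FE$, Lemma \ref{bbbbbb} furnishes a nonsingular $P$ putting $E$ and $F$ into the displayed block form with invertible $D_1,D_2$. Writing $P^{-1}E_0P$ and $P^{-1}F_0P$ as $3\times 3$ block matrices and using $\widehat E,\widehat F\in\mathbb{D}_n^{\tiny \mbox{\rm CM}}$, i.e.\ $(I_n-EE^{\#})E_0(I_n-E^{\#}E)=0$ and $(I_n-FF^{\#})F_0(I_n-F^{\#}F)=0$, forces the lower-right $2\times 2$ block of $P^{-1}E_0P$ and the $(3,3)$ block of $P^{-1}F_0P$ to vanish, giving $\widehat E$ and $\widehat F$ the shapes in \eqref{DPSPO-Char-1-1}. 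Computing $EE_0$, $EF_0$, $E_0E$, $F_0E$ blockwise: $EE_0=EF_0$ equates the first block-rows of the dual parts (so $F_1=E_1$, $F_2=E_2$, $F_3=E_3$ in the notation of my expansion), while $E_0E=F_0E$ equates the first block-columns (so $F_1=E_1$, $F_4=E_4$, $F_7=E_7$); together these leave only the three free entries $F_5,F_6,F_8$, which is exactly \eqref{DPSPO-Char-1-1}. The converse (4) $\Rightarrow$ (3) is the same block computation read backwards.

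The main obstacle is the bookkeeping in (2) $\Leftrightarrow$ (3): one must check that the group-inverse dual-part identities genuinely simplify to $EE_0=EF_0$ and $E_0E=F_0E$ rather than to the a priori weaker $E^{\#}E_0=E^{\#}F_0$, $E_0E^{\#}=F_0E^{\#}$. This rests entirely on the index-one identities $E=E^2E^{\#}=E^{\#}E^2$ and $EE^{\#}=E^{\#}E$, so care is needed to invoke them at the right places; the remaining computations are routine.
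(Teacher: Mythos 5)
This statement is Lemma~\ref{tianheDPSPO-Char-1-Th}, which the paper imports from \cite{tianhe} without proof, so there is no in-paper argument to compare yours against. On its own merits your reconstruction is correct: the expansion of $\widehat E^g\widehat E=\widehat E^g\widehat F$ does reduce, once $E^\#E=E^\#F$ kills the terms $E^\#E_0E^\#E$ and $E^\#E_0E^\#F$ against each other, to $E^\#E_0=E^\#F_0$, and this is genuinely \emph{equivalent} (not merely implied by) $EE_0=EF_0$ via $E=E^2E^\#$ and $E^\#=(E^\#)^2E$ --- so the ``obstacle'' you flag at the end is not actually a gap, though your stated multipliers should be $E^2$ and $(E^\#)^2$ rather than $E$ and $E^\#$ (a harmless slip, since $E^\#EE_0=E^\#EF_0$ is also equivalent to the others under index one). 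The passage to (4) via Lemma~\ref{bbbbbb}(5) plus the membership criterion $(I_n-EE^\#)E_0(I_n-E^\#E)=0$ for $\mathbb{D}_n^{\tiny \mbox{\rm CM}}$ is the natural route and matches how the paper handles the analogous block computations elsewhere (e.g.\ in Theorems~\ref{fenjie-2} and~\ref{dms-fenjie}).
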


These two   types of dual sharp partial  orders are not equivalent and their differences mainly lie in the dual part.
When $ EF_0=EE_0=FE_0$
  and
  $ E_0F=E_0E=F_0E$, the D-sharp partial order is equivalent to the G-sharp partial order.

\begin{lemma}
[{\cite{wang2020mamt}}]
\label{Wang2021MAMT-Th2.1}
Let $\widehat E=E+\varepsilon E_0 \in \mathbb{D}^{m\times n}$.
Then the DMPGI $\widehat E^{\dag}$ of $\widehat E$ exists
if and only if
 \begin{align}
 \label{good-one}
 {\rk}\left(\begin{matrix}
  E_0  &E\\ E  &O
  \end{matrix}\right)
  =2{\rk}\left( E \right).
  \end{align}
\end{lemma}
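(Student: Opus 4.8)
The plan is to reduce the rank condition on the block matrix to the known existence criterion for the DMPGI, namely that $\widehat E^\dag$ exists if and only if $(I_m-EE^\dag)E_0(I_n-E^\dag E)=0$ (this fact is recalled in the introduction). First I would bring $E$ into a convenient canonical form: choose orthogonal (or merely invertible) matrices $U,V$ so that $UEV=\begin{pmatrix}\Sigma&O\\O&O\end{pmatrix}$ with $\Sigma$ invertible of size $r_e=\rk(E)$, and partition $U^{\rm T}E_0V$ conformally as $\begin{pmatrix}A&B\\C&K\end{pmatrix}$. Since row/column operations by invertible matrices do not change ranks, the block matrix $\begin{pmatrix}E_0&E\\E&O\end{pmatrix}$ is equivalent to a matrix whose only obstruction to having rank exactly $2r_e$ is the south-east block $K$ — which is exactly (a realization of) $(I_m-EE^\dag)E_0(I_n-E^\dag E)$.

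The key steps, in order, are: (i) perform the simultaneous block-diagonalization of $E$ and record the induced partition of $E_0$; (ii) apply a block permutation to $\begin{pmatrix}E_0&E\\E&O\end{pmatrix}$ so that the two copies of $\Sigma$ occupy pivot positions; (iii) use those invertible pivot blocks to clear, via elementary block row and column operations, everything in their rows and columns, leaving a matrix of the form $\begin{pmatrix}\Sigma&O&O&O\\O&\Sigma&O&O\\O&O&O&K'\\O&O&?&O\end{pmatrix}$ up to reordering, where $K'$ is $K$ up to multiplication by invertible matrices; (iv) conclude $\rk\begin{pmatrix}E_0&E\\E&O\end{pmatrix}=2r_e+\rk(K')=2r_e+\rk\big((I_m-EE^\dag)E_0(I_n-E^\dag E)\big)$; (v) invoke the recalled criterion to see that the right-hand side equals $2r_e$ precisely when the DMPGI exists. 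I would write out step (iii) carefully since that is where the Schur-complement-style elimination happens, but it is entirely routine block arithmetic once the pivots are in place.

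The main obstacle is bookkeeping rather than conceptual: one must be careful that the elementary operations used to eliminate the off-pivot blocks (the $A,B,C$ blocks of $E_0$ and the lower copy of $E$) genuinely do not touch the $K$ block, so that the residual rank is exactly $\rk(K)$ and not something larger. A clean way to guarantee this is to do the two eliminations on disjoint sets of rows/columns — use the first $\Sigma$ to clear its block-row and block-column, then the second $\Sigma$ to clear the remaining block-row and block-column — and observe that after the first round the $K$-entry is untouched and after the second it is still untouched. Once that is verified, identifying the surviving block with $(I_m-EE^\dag)E_0(I_n-E^\dag E)$ (up to left/right multiplication by the invertible matrices built from $U,V,\Sigma$) is immediate, and the equivalence with the rank equality $\rk\begin{pmatrix}E_0&E\\E&O\end{pmatrix}=2\rk(E)$ follows.
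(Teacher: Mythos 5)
Your proposal is correct. Note that the paper itself gives no proof of this lemma: it is imported verbatim from the cited reference, and the only related fact the paper records is the criterion from the introduction, namely that $\widehat E^{\dag}$ exists if and only if $(I_m-EE^{\dag})E_0(I_n-E^{\dag}E)=0$. Your argument is precisely the natural bridge between that criterion and the rank identity \eqref{good-one}: writing $E=U\left(\begin{smallmatrix}\Sigma&O\\O&O\end{smallmatrix}\right)V^{\rm T}$ and $E_0=U\left(\begin{smallmatrix}A&B\\C&K\end{smallmatrix}\right)V^{\rm T}$, the block elimination you describe does yield
\begin{align*}
{\rk}\begin{pmatrix}E_0&E\\E&O\end{pmatrix}=2\,{\rk}(E)+{\rk}(K),
\qquad
K \ \text{realizing} \ (I_m-EE^{\dag})E_0(I_n-E^{\dag}E),
\end{align*}
and your worry about the eliminations disturbing $K$ is unfounded for exactly the reason you give: the pivot row (block row $3$) is zero in block column $2$ and the pivot column (block column $3$) is zero outside block row $1$, so neither round of clearing touches the $(2,2)$ entry. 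The only blemish is notational: having set $UEV$ to the canonical form you should partition $UE_0V$, not $U^{\rm T}E_0V$ (harmless when $U$ is orthogonal but worth making consistent). This is essentially the same computation used in the source to establish the two equivalent existence criteria, so nothing is gained or lost relative to the literature; it is a complete and correct proof.
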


Let $ \widehat E=E+\varepsilon E_0$ and
denote
\begin{align}
\label{D-Rank}
\rk\left( \widehat E \right)
=
\rk\left(\begin{matrix}
 E_0  &     E           \\
 E      &   O
\end{matrix}\right)-\rk (E).
\end{align}

By applying Lemma \ref{Wang2021MAMT-Th2.1},
it is easy to get the following Lemma \ref{Th-Rank=DMPGI}.
\begin{lemma}
\label{Th-Rank=DMPGI}
Let
$\widehat E=E+\varepsilon E_0$.
Then the DMPGI of $\widehat E$   exists
if and only if
\begin{align}
\label{Th-Rank=DMPGI-Eq}
\rk\left( \widehat E \right)=\rk(E).
\end{align}
\end{lemma}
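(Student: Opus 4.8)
The plan is to reduce the claimed equivalence directly to Lemma~\ref{Wang2021MAMT-Th2.1}, which already gives a rank criterion for the existence of the DMPGI, and then rewrite that criterion using the notation introduced in \eqref{D-Rank}. First I would recall that Lemma~\ref{Wang2021MAMT-Th2.1} states that $\widehat E^{\dag}$ exists if and only if
\begin{align*}
\rk\left(\begin{matrix} E_0 & E\\ E & O\end{matrix}\right)=2\,\rk(E).
\end{align*}
Next I would subtract $\rk(E)$ from both sides of this identity, obtaining the equivalent statement
\begin{align*}
\rk\left(\begin{matrix} E_0 & E\\ E & O\end{matrix}\right)-\rk(E)=\rk(E).
\end{align*}
By the very definition \eqref{D-Rank} of $\rk(\widehat E)$, the left-hand side is exactly $\rk(\widehat E)$, so this is precisely \eqref{Th-Rank=DMPGI-Eq}.

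Putting the two steps together, the DMPGI of $\widehat E$ exists if and only if $\rk(\widehat E)=\rk(E)$, which is the assertion. The only point that requires a word of care is that the passage from the displayed $2\rk(E)$ identity to the $\rk(\widehat E)=\rk(E)$ identity is a genuine equivalence, not merely an implication; this is immediate since subtracting the fixed finite quantity $\rk(E)$ from both sides of an equality of integers is reversible.

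In short, there is essentially no obstacle here: the statement is a cosmetic reformulation of Lemma~\ref{Wang2021MAMT-Th2.1} made possible by the abbreviation \eqref{D-Rank}, and the proof is the one-line chain of equivalences above. The purpose of isolating it as Lemma~\ref{Th-Rank=DMPGI} is purely notational convenience for later sections, where the block-matrix rank will repeatedly be written as $\rk(\widehat E)$.
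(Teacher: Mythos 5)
Your proposal is correct and matches the paper's intent exactly: the paper itself gives no written proof, remarking only that the lemma follows ``by applying Lemma \ref{Wang2021MAMT-Th2.1},'' and your one-line chain of equivalences via the definition \eqref{D-Rank} is precisely that argument spelled out. Nothing is missing.
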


\section{Dual-minus Partial Order}
\label{Section-3-Dual-minus-PO}
In this section,
 we introduce the dual-minus partial order by applying  the minus partial order and the DMPGI,
and discuss relationships between the
 Dual-minus partial order and the  D-sharp partial order,
 D-star partial order,
 G-sharp partial order,  P-star partial order respectively.

\begin{definition}
\label{Dual-minusPartialOrder-Def}
Let
$\widehat E=E+\varepsilon E_0$,
$\widehat F=F+\varepsilon F_0\in\mathbb{D}^{m\times n}$,
and $\widehat E$ and $\widehat F$ have the DMPGIs.
Write $\widehat E\overset{\tiny\mbox{\rm D\!}}\leq\widehat F$:
\begin{align}
\label{nice-def}
E \leq F \
\mbox{and the DMPGI of} \ \widehat F-\widehat E \ \mbox{exists}.
\end{align}
We call $\widehat E$ is below $\widehat F$  under the Dual-minus order.
 \end{definition}

\begin{theorem}
\label{nice-1}
 Let $\widehat E=E+\varepsilon E_0$,
$\widehat F=F+\varepsilon F_0\in\mathbb{D}^{m\times n}$,
and $\widehat E$ and $\widehat F$ have the DMPGIs.
Then $\widehat E\overset{\tiny\mbox{\rm D\!}}\leq\widehat F$  if and only if
\begin{align}
\label{ok-1}
\rk\left( \widehat F \right)-\rk\left( \widehat E \right)
=\rk\left( \widehat F-\widehat E \right).
\end{align}
\end{theorem}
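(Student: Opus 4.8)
The plan is to convert both sides of the claimed equivalence into rank conditions and then squeeze them together using two elementary rank inequalities. First I would record the consequences of the standing hypotheses: since $\widehat E$ and $\widehat F$ have DMPGIs, Lemma~\ref{Th-Rank=DMPGI} gives $\rk(\widehat E)=\rk(E)=r_e$ and $\rk(\widehat F)=\rk(F)=r_f$. Hence \eqref{ok-1} is equivalent to $\rk(\widehat F-\widehat E)=r_f-r_e$. Write $\widehat F-\widehat E=(F-E)+\varepsilon(F_0-E_0)$ and abbreviate $G=F-E$.

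For the forward direction, assume $\widehat E\overset{\tiny\mbox{\rm D\!}}\leq\widehat F$. By Definition~\ref{Dual-minusPartialOrder-Def}, $E\le F$, so Lemma~\ref{lemma-1} (condition (2)) yields $\rk(G)=\rk(F)-\rk(E)=r_f-r_e$; moreover the DMPGI of $\widehat F-\widehat E$ exists, so Lemma~\ref{Th-Rank=DMPGI} gives $\rk(\widehat F-\widehat E)=\rk(G)=r_f-r_e$, which is exactly \eqref{ok-1}.

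For the converse I would use: (i) subadditivity of rank, $\rk(G)=\rk\bigl((E)+(F-E)\bigr)\le\rk(E)+\rk(G)$ applied in the form $\rk(F-E)\ge\rk(F)-\rk(E)=r_f-r_e$; and (ii) the standard block-matrix rank inequality $\rk\begin{pmatrix}G_0&G\\ G&O\end{pmatrix}\ge 2\rk(G)$, which by the definition \eqref{D-Rank} of $\rk(\widehat F-\widehat E)$ gives $\rk(\widehat F-\widehat E)\ge\rk(G)$. Combining these with the assumed equality \eqref{ok-1} and $\rk(\widehat F)-\rk(\widehat E)=r_f-r_e$, we obtain the chain
\[
r_f-r_e=\rk(\widehat F)-\rk(\widehat E)=\rk(\widehat F-\widehat E)\ge\rk(G)\ge r_f-r_e,
\]
so every inequality is forced to be an equality. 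In particular $\rk(F-E)=\rk(F)-\rk(E)$, which is condition (2) of Lemma~\ref{lemma-1}, hence $E\le F$; and $\rk(\widehat F-\widehat E)=\rk(F-E)$, which by Lemma~\ref{Th-Rank=DMPGI} means the DMPGI of $\widehat F-\widehat E$ exists. By Definition~\ref{Dual-minusPartialOrder-Def} this is precisely $\widehat E\overset{\tiny\mbox{\rm D\!}}\leq\widehat F$.

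I do not anticipate a real obstacle; the only point requiring a moment of care is the direction of the block-rank inequality $\rk\begin{pmatrix}G_0&G\\ G&O\end{pmatrix}\ge 2\rk(G)$ (equivalently, $\rk(\widehat F-\widehat E)\ge\rk(F-E)$), since it is what makes the squeeze collapse to equalities. Everything else is bookkeeping with the characterizations already quoted in Lemmas~\ref{lemma-1} and~\ref{Th-Rank=DMPGI}.
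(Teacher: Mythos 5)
Your proposal is correct and follows essentially the same route as the paper: both directions reduce to Lemma~\ref{Th-Rank=DMPGI} and Lemma~\ref{lemma-1}, and the converse is settled by squeezing between the same two inequalities $\rk(F-E)\ge\rk(F)-\rk(E)$ and $\rk(\widehat F-\widehat E)\ge\rk(F-E)$ that the paper uses. The only blemish is a typo in your subadditivity line (the left-hand side should be $\rk(F)$, not $\rk(G)$), which does not affect the argument.
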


\begin{proof}
Since $\widehat E$ and $\widehat F$ have the DMPGIs,
 we have
$\rk\left( \widehat E \right)=\rk\left(  E \right)$
and
$\rk\left( \widehat F \right)=\rk\left(  F \right)$ according to Lemma \ref{Th-Rank=DMPGI}.

Let $\widehat E\overset{\tiny\mbox{\rm D\!}}\leq\widehat F$. 
Then
 $\widehat F-\widehat E$ has the DMPGI
 and $E\leq F$ from Definition \ref{Dual-minusPartialOrder-Def}.
Since the DMPGI of $\widehat F-\widehat E$  exists,
we have
$\rk\left( \widehat F-\widehat E \right)=
\rk\left(  F - E \right)$ by applying Lemma \ref{Th-Rank=DMPGI}.
 And since
$E\leq F$,
 we have $\rk\left(  F \right)-\rk\left(  E \right)=\rk\left(  F-E \right)$ by applying Lemma \ref{lemma-1}.
Thus,
$\rk\left( \widehat F \right)-\rk\left( \widehat E \right)=\rk\left( \widehat F-\widehat E \right)$.

Conversely,
let $\rk\left( \widehat F \right)-\rk\left( \widehat E \right)=\rk\left( \widehat F-\widehat E \right)$.
Since
\begin{align}
&\rk\left( \widehat F \right)-\rk\left( \widehat E \right)=\rk\left( F\right)-\rk\left( E \right) \leq \rk\left( F-E \right),
\\
&\rk \left(  \widehat F-\widehat E  \right) \geq \rk \left( F-E \right),
\end{align}
we have $\rk \left(  \widehat F-\widehat E  \right) = \rk \left( F-E \right)$ and
 $\rk\left(  F \right)-\rk\left(  E \right)=\rk\left(  F- E \right)$.
 Therefore, we get that the  DMPGI of $\widehat F-\widehat E$ exists and
 $E\leq F$,
 that is, $\widehat E\overset{\tiny\mbox{\rm D\!}}\leq\widehat F$.
\end{proof}

 \begin{theorem}
 \label{nice-2}
 Let $\widehat E=E+\varepsilon E_0$,
$\widehat F=F+\varepsilon F_0\in\mathbb{D}^{m\times n}$,
and $\widehat E$ and $\widehat F$ have the DMPGIs.
Then $\widehat E\overset{\tiny\mbox{\rm D\!}}\leq\widehat F$ if and only if
\begin{align}
\label{ok-2}
\begin{pmatrix}
E_0& E
\\
E&  O
\end{pmatrix}
\leq
\begin{pmatrix}
F_0& F
\\
F&   O
\end{pmatrix}.
\end{align}
\end{theorem}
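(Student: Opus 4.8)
The plan is to reduce the claimed equivalence to Theorem~\ref{nice-1}, which already characterizes $\widehat E\overset{\tiny\mbox{\rm D\!}}\leq\widehat F$ in terms of the dual rank function $\rk(\cdot)$ defined in \eqref{D-Rank}. So the real content is to show that condition \eqref{ok-2} — an \emph{ordinary} minus partial order between two $2m\times 2n$ real matrices — is equivalent to the rank identity \eqref{ok-1}. For this I would apply Lemma~\ref{lemma-1}(2) to the block matrices appearing in \eqref{ok-2}: the minus order $\begin{pmatrix}E_0&E\\E&O\end{pmatrix}\le\begin{pmatrix}F_0&F\\F&O\end{pmatrix}$ holds iff $\rk\!\begin{pmatrix}F_0-E_0&F-E\\F-E&O\end{pmatrix}=\rk\!\begin{pmatrix}F_0&F\\F&O\end{pmatrix}-\rk\!\begin{pmatrix}E_0&E\\E&O\end{pmatrix}$.

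The key observation is that each of these three $2\times 2$ block ranks is, by the very definition \eqref{D-Rank}, equal to a dual rank plus an ordinary rank: $\rk\!\begin{pmatrix}E_0&E\\E&O\end{pmatrix}=\rk(\widehat E)+\rk(E)$, and similarly for $\widehat F$ and for $\widehat F-\widehat E$ (noting that $\widehat F-\widehat E=(F-E)+\varepsilon(F_0-E_0)$, so its associated block matrix is exactly $\begin{pmatrix}F_0-E_0&F-E\\F-E&O\end{pmatrix}$). Substituting these into the Lemma~\ref{lemma-1}(2) criterion, the block-minus condition \eqref{ok-2} becomes
\begin{align*}
\rk(\widehat F-\widehat E)+\rk(F-E)=\bigl(\rk(\widehat F)+\rk(F)\bigr)-\bigl(\rk(\widehat E)+\rk(E)\bigr).
\end{align*}
Since $\widehat E$ and $\widehat F$ have DMPGIs, Lemma~\ref{Th-Rank=DMPGI} gives $\rk(\widehat E)=\rk(E)$ and $\rk(\widehat F)=\rk(F)$, so the right side is $2\rk(F)-2\rk(E)$; but also $\rk(F-E)\ge\rk(F)-\rk(E)$ always, and $\rk(\widehat F-\widehat E)\ge\rk(F-E)$ always (the standard part of a dual matrix has rank at most the dual rank). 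Hence the displayed equality forces $\rk(F-E)=\rk(F)-\rk(E)$ (i.e. $E\le F$) \emph{and} $\rk(\widehat F-\widehat E)=\rk(F-E)$ (i.e. the DMPGI of $\widehat F-\widehat E$ exists, by Lemma~\ref{Th-Rank=DMPGI}); combined, these yield precisely \eqref{ok-1}. Conversely, if \eqref{ok-1} holds, then $\widehat E\overset{\tiny\mbox{\rm D\!}}\leq\widehat F$ by Theorem~\ref{nice-1}, so $E\le F$ and the DMPGI of $\widehat F-\widehat E$ exists, which lets us run the same substitution backwards to recover the Lemma~\ref{lemma-1}(2) criterion, hence \eqref{ok-2}.

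The main obstacle I anticipate is justifying the two inequalities $\rk(F-E)\ge\rk(F)-\rk(E)$ and $\rk(\widehat F-\widehat E)\ge\rk(F-E)$ cleanly; the first is the elementary subadditivity of rank, while the second is exactly the inequality $\rk(\widehat G)\ge\rk(G)$ for any dual matrix $\widehat G=G+\varepsilon G_0$, which follows because $\begin{pmatrix}G_0&G\\G&O\end{pmatrix}$ contains $G$ as a block, so its rank is at least $2\rk(G)$, giving $\rk(\widehat G)=\rk\begin{pmatrix}G_0&G\\G&O\end{pmatrix}-\rk(G)\ge\rk(G)$. These facts are already implicit in the proof of Theorem~\ref{nice-1}, so the argument stays self-contained. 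A secondary point to be careful about is bookkeeping the block sizes (the matrices in \eqref{ok-2} are square of order $2m$ when $m=n$, but Lemma~\ref{lemma-1} as stated is for square matrices; since $E,F\in\mathbb{R}^{m\times n}$ the cleanest route may be to cite the rectangular version of the minus-order rank criterion, which holds verbatim), but this is routine.
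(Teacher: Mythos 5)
Your proposal is correct and follows essentially the same route as the paper: both reduce \eqref{ok-2} to the rank identity of Lemma~\ref{lemma-1}(2) for the $2\times 2$ block matrices, invoke Lemma~\ref{Wang2021MAMT-Th2.1} (equivalently Lemma~\ref{Th-Rank=DMPGI}) to replace the block ranks of $\widehat E$ and $\widehat F$ by $2\rk(E)$ and $2\rk(F)$, and then use the sandwich $2\bigl(\rk(F)-\rk(E)\bigr)\le 2\rk(F-E)\le\rk\bigl(\begin{smallmatrix}F_0-E_0&F-E\\F-E&O\end{smallmatrix}\bigr)$ to force both $E\le F$ and the existence of the DMPGI of $\widehat F-\widehat E$. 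Your phrasing through the dual rank $\rk(\widehat\cdot)$ and Theorem~\ref{nice-1} is only a repackaging of the same computation, and your remark about needing the rectangular form of the minus-order rank criterion is a legitimate (and slightly more careful) observation that the paper itself glosses over.
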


\begin{proof}
Let  the DMPGIs  of
$\widehat E$ and $\widehat F$ exist
  and  $\widehat E\overset{\tiny\mbox{\rm D\!}}\leq\widehat F$.
According to Lemma \ref{lemma-1},
Lemma \ref{Wang2021MAMT-Th2.1}
 and Definition \ref{Dual-minusPartialOrder-Def},
we have
 $\rk\left(F-E\right)=\rk(F)-\rk(E)$,
 $\rk\left(
\begin{smallmatrix}
E_0&E\\
E&O
\end{smallmatrix}\right)=2\rk(E)$,
 $\rk\left(
\begin{smallmatrix}
F_0&F\\
F&O
\end{smallmatrix}\right)=2\rk(F)$ and
$\rk\left(
\begin{smallmatrix}
F_0-E_0&F-E\\
F-E&O
\end{smallmatrix}\right)=2\rk(F-E)$.
Therefore,
$
\rk
\left(\begin{smallmatrix}
F_0&F\\
F&O
\end{smallmatrix}\right)-
\rk
\left(\begin{smallmatrix}
E_0&E\\
E&O
\end{smallmatrix}\right)=
2\rk(F)-2\rk(E)=
2\rk(F-E)=
\rk
\left(\begin{smallmatrix}
F_0-E_0&F-E\\
F-E&O
\end{smallmatrix}\right).$
It follows from Lemma \ref{lemma-1} that we get (\ref{ok-2}).

Conversely, by applying (\ref{ok-2}), we have
\begin{align}
\label{proof-nice2}
\rk
\begin{pmatrix}
F_0&F\\
F&O
\end{pmatrix}-
\rk
\begin{pmatrix}
E_0&E\\
E&O
\end{pmatrix}=
\rk
\begin{pmatrix}
F_0-E_0&F-E\\
F-E&O
\end{pmatrix}.
\end{align}
Since $\widehat E$ and $\widehat F$ have the DMPGIs, 
we have
$\rk
\left(\begin{smallmatrix}
E_0&E\\
E&O
\end{smallmatrix}\right)
 =2\rk(E)$ and
$\rk
\left(\begin{smallmatrix}
F_0&F\\
F&O
\end{smallmatrix}\right)=2\rk(F).$
From (\ref{proof-nice2}), it follows that
\begin{align}
\label{proof-nice22}
\rk
\begin{pmatrix}
F_0-E_0&F-E\\
F-E&O
\end{pmatrix}=2\left(\rk(F)-\rk(E)\right).
\end{align}
Furthermore, applying
\begin{align*}
2\left(\rk(F)-\rk(E)\right)\leq2\rk \left(F-E\right)\leq
\rk
\begin{pmatrix}
F_0-E_0&F-E\\
F-E&O
\end{pmatrix}=2\left(\rk(F)-\rk(E)\right),
\end{align*}
 we get
\begin{align*}
\rk(F)-\rk(E)=\rk(F-E)~
\mbox{and}~
\rk
\begin{pmatrix}
F_0-E_0&F-E\\
F-E&O
\end{pmatrix}=
2\rk\left(F-E\right).
\end{align*}
Therefore, $E\leq F$ and the DMPGI of $\widehat F-\widehat E$ exists,
 that is, $\widehat E \overset{\tiny\mbox{\rm D \!}}\leq \widehat F$.
\end{proof}

Applying Theorem \ref{nice-2}, we obviously obtain the following Theorem \ref{nice-3}.

\begin{theorem}
\label{nice-3}
The Dual-minus order is a  partial order.
\end{theorem}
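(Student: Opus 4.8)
The plan is to reduce the claim to Theorem~\ref{nice-2}, which already provides the crucial characterization of $\widehat E\overset{\tiny\mbox{\rm D\!}}\leq\widehat F$ in terms of an \emph{ordinary} minus partial order between the block matrices $\begin{pmatrix}E_0&E\\E&O\end{pmatrix}$ and $\begin{pmatrix}F_0&F\\F&O\end{pmatrix}$. Since the minus partial order $\leq$ on $\mathbb{R}^{2m\times 2n}$ is known to be reflexive, transitive, and antisymmetric (this is the classical result recorded essentially in Lemma~\ref{lemma-1}), the bulk of the work is to transport these three properties along the correspondence $\widehat E\mapsto\begin{pmatrix}E_0&E\\E&O\end{pmatrix}$. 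The only subtlety is that this map is not surjective onto all of $\mathbb{R}^{2m\times 2n}$, and that the Dual-minus relation is only defined on the subset of dual matrices whose DMPGI exists; so I must check that the block-matrix inequalities we obtain genuinely come back to statements about dual matrices in that admissible class.

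Concretely, I would argue as follows. \textbf{Reflexivity:} for any $\widehat E$ with DMPGI, $\begin{pmatrix}E_0&E\\E&O\end{pmatrix}\leq\begin{pmatrix}E_0&E\\E&O\end{pmatrix}$ trivially, so $\widehat E\overset{\tiny\mbox{\rm D\!}}\leq\widehat E$ by Theorem~\ref{nice-2}. \textbf{Antisymmetry:} if $\widehat E\overset{\tiny\mbox{\rm D\!}}\leq\widehat F$ and $\widehat F\overset{\tiny\mbox{\rm D\!}}\leq\widehat E$, then by Theorem~\ref{nice-2} the two block matrices are related in both directions under the minus order, hence equal by antisymmetry of $\leq$; comparing standard and dual parts of the $(1,1)$ and $(1,2)$ blocks gives $E=F$ and $E_0=F_0$, i.e. $\widehat E=\widehat F$. \textbf{Transitivity:} if $\widehat E\overset{\tiny\mbox{\rm D\!}}\leq\widehat F$ and $\widehat F\overset{\tiny\mbox{\rm D\!}}\leq\widehat G$ (with $\widehat G$ also having a DMPGI), Theorem~\ref{nice-2} gives $\begin{pmatrix}E_0&E\\E&O\end{pmatrix}\leq\begin{pmatrix}F_0&F\\F&O\end{pmatrix}\leq\begin{pmatrix}G_0&G\\G&O\end{pmatrix}$, so by transitivity of $\leq$ we get $\begin{pmatrix}E_0&E\\E&O\end{pmatrix}\leq\begin{pmatrix}G_0&G\\G&O\end{pmatrix}$, and Theorem~\ref{nice-2} again yields $\widehat E\overset{\tiny\mbox{\rm D\!}}\leq\widehat G$.

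The step I expect to require the most care is making sure the hypotheses line up: Theorem~\ref{nice-2} presupposes that the DMPGIs of the two dual matrices being compared exist, so in the transitivity argument I must have all three of $\widehat E,\widehat F,\widehat G$ admissible, which is exactly the standing assumption in the definition of a Dual-minus-comparable triple; and in antisymmetry I should note that equality of the block matrices forces equality of the dual matrices because the blocks literally display $E$, $E_0$, $F$, $F_0$. No rank computation beyond what Theorem~\ref{nice-1} and Theorem~\ref{nice-2} already supply is needed. I would therefore keep the proof to a few lines, invoking Theorem~\ref{nice-2} to reduce each of the three axioms to the corresponding (known) property of the minus partial order on real matrices.
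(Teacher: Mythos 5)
Your proposal is correct and follows exactly the route the paper takes: the paper derives Theorem~\ref{nice-3} directly from Theorem~\ref{nice-2} (stating only that the result ``obviously'' follows), i.e.\ by transporting reflexivity, antisymmetry, and transitivity of the classical minus partial order on the block matrices back to the Dual-minus relation. Your write-up simply makes explicit the details (injectivity of $\widehat E\mapsto\left(\begin{smallmatrix}E_0&E\\E&O\end{smallmatrix}\right)$ for antisymmetry, matching DMPGI hypotheses for transitivity) that the paper leaves to the reader.
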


\begin{remark}
Let $\widehat  E=E+\varepsilon E_0\in\mathbb{D}^{m\times n}$,
$\widehat  F=F+\varepsilon F_0\in\mathbb{D}^{m\times n}$,
and $\widehat  E$ and $\widehat F$ have the DMPGIs.
If $\widehat E\overset{\tiny\mbox{\rm  D\!-}\ast}\leq\widehat F$,
according to Lemma \ref{PEIDSPO-Char-2-Th},
 then we have the forms of $\widehat E$ and $\widehat F$ as in (\ref{PEIDSPO-Char-2}).
 The standard part $E$ of $\widehat E$
and
the standard part $F$ of $\widehat F$ in equation (\ref{PEIDSPO-Char-2}) clearly satisfy $E\leq F$.
Since \begin{align}
\label{see-one}
\widehat F-\widehat E
 =\left(F-E\right)+\varepsilon \left(F_0-E_0\right)=
U\begin{pmatrix}
O&O&O\\
O&D_2&O\\
O&O&O
\end{pmatrix}
V^{\rm{ T }}
+\varepsilon U\begin{pmatrix}
O&-D_1^{-1}{E_4^{\rm {T}}}D_2&O\\
-D_2{E_2^{\rm {T}}}{ D_1^{-1}}&F_5&F_6\\
O&F_8&O\end{pmatrix}V^{\rm{ T }},
\end{align}
then
 \begin{align*}
 {\rk}\left(\begin{matrix}
  F_0-E_0  &F-E\\ F-E  &O
  \end{matrix}\right)
  =2{\rk}\left( F-E \right).
  \end{align*}
  Thus, the DMPGI of $\widehat F-\widehat E$ exists from
 Theorem \ref{Wang2021MAMT-Th2.1}.
  Therefore, according to Definition
\ref{Dual-minusPartialOrder-Def},  the D-star partial order is a Dual-minus-type partial order.

Similarly,
by applying
Lemma \ref{peiDPSO-Char-1-Th},
 Lemma \ref{TIANHEDSPO-Char-2-Th},
 Lemma \ref{tianheDPSPO-Char-1-Th},
 Lemma \ref{Th-Rank=DMPGI}, and
 Definition \ref{Dual-minusPartialOrder-Def},
 we get that the
  P-star partial order, D-sharp partial order,
and G-sharp partial order are all Dual-minus-type partial orders.
\end{remark}

 Next, we provide a characterization of the Dual-minus partial order.

\begin{theorem}
\label{fenjie-the}
Let
$\widehat E=E+\varepsilon E_0 \in\mathbb{D}^{m\times n}$
and  $\widehat F=F+\varepsilon F_0 \in\mathbb{D}^{m\times n}$,
and $\widehat E$ and $\widehat F$ have the DMPGIs.
 Then $\widehat E\overset{\tiny\mbox{\rm  D \!}}\leq\widehat F$
  if and only if there are orthogonal matrices $U$ and $V$ such that
{\small\begin{align}
\label{fenjieshiA}
&\widehat E=
U\begin{pmatrix}
D_1&O&O\\
O&O&O\\
O&O&O\end{pmatrix}V^{\rm{ T }}
+\varepsilon U
\begin{pmatrix}
E_1&E_2&E_3\\
E_4&O&O\\
E_7&O&O
\end{pmatrix}V^{\rm{ T }},
\\
\label{fenjieshi-B}
&\widehat F=
U\begin{pmatrix}
D_1+RD_2S&RD_2&O\\
D_2S&D_2&O\\
O&O&O
\end{pmatrix}
V^{\rm{ T }}
+\varepsilon U\begin{pmatrix}
E_1+RM+NS{ -RF_5S}&{ E_2+N}&E_3+RF_6\\
{ E_4+M}&F_5&F_6\\
E_7+F_8S&F_8&O\end{pmatrix}V^{\rm{ T }},
\end{align}}
 where  $D_1\in \mathbb{R}^{r_e\times r_e}$, $D_2\in \mathbb{R}^{(r_f-r_e)\times (r_f-r_e)}$ are invertible, 
 and  $M,  S\in\mathbb{R}^{(r_f-r_e)\times r_e}$, $N, R\in\mathbb{R}^{r_e\times (r_f-r_e)}$  are some suitable matrices.
\end{theorem}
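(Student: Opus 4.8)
The plan is to obtain both implications from two facts already in hand: the canonical form for the minus partial order on the standard parts (Lemma~\ref{lemma-1}), and the rank criterion $\rk\left(\begin{smallmatrix}X_0&X\\ X&O\end{smallmatrix}\right)=2\rk(X)$ for existence of the DMPGI of $X+\varepsilon X_0$ (Lemma~\ref{Wang2021MAMT-Th2.1}, Lemma~\ref{Th-Rank=DMPGI}), applied in turn to $\widehat E$, $\widehat F$ and $\widehat F-\widehat E$.

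For necessity, assume $\widehat E\overset{\tiny\mbox{\rm D\!}}\leq\widehat F$. By Definition~\ref{Dual-minusPartialOrder-Def}, $E\leq F$, so Lemma~\ref{lemma-1} supplies orthogonal $U,V$, invertible $D_1,D_2$, and matrices $R,S$ realizing the standard parts of $\widehat E,\widehat F$ exactly as in (\ref{fenjieshiA})--(\ref{fenjieshi-B}). Decompose $E_0$ and $F_0$ in the $U,V$ coordinates into $3\times3$ block arrays with block sizes $r_e,\ r_f-r_e,\ m-r_f$ (rows) and $r_e,\ r_f-r_e,\ n-r_f$ (columns). I would then extract block constraints from three rank equalities. \textbf{(i)} $\rk\left(\begin{smallmatrix}E_0&E\\ E&O\end{smallmatrix}\right)=2r_e$: conjugating the bordered matrix by $\mathrm{diag}(U,U),\mathrm{diag}(V,V)$ and eliminating against the two invertible copies of $D_1$ leaves a residual rank equal to the rank of the lower-right $(m-r_e)\times(n-r_e)$ subarray of $E_0$; so that subarray is $O$, which is precisely the zero pattern of $E_0$ in (\ref{fenjieshiA}). \textbf{(ii)} $\rk\left(\begin{smallmatrix}F_0&F\\ F&O\end{smallmatrix}\right)=2r_f$: since the leading $r_f\times r_f$ block of $F$ in these coordinates, $\left(\begin{smallmatrix}D_1+RD_2S&RD_2\\ D_2S&D_2\end{smallmatrix}\right)$, is invertible, the same elimination forces the $(3,3)$ block of $F_0$ to be $O$. \textbf{(iii)} The decisive step: factor $F-E=U\widetilde P\left(\begin{smallmatrix}O&O&O\\ O&D_2&O\\ O&O&O\end{smallmatrix}\right)\widetilde Q V^{\rm T}$ with $\widetilde P=\left(\begin{smallmatrix}I&R&O\\ O&I&O\\ O&O&I\end{smallmatrix}\right)$, $\widetilde Q=\left(\begin{smallmatrix}I&O&O\\ -S&I&O\\ O&O&I\end{smallmatrix}\right)$ invertible, write $\widetilde P^{-1}(F_0-E_0)\widetilde Q^{-1}$ as the $3\times3$ block array $(B_{ij})$, and compute $\rk\left(\begin{smallmatrix}F_0-E_0&F-E\\ F-E&O\end{smallmatrix}\right)$ by block elimination against the four invertible copies of $D_2$. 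The leftover rank equals $\rk\left(\begin{smallmatrix}B_{11}&B_{13}\\ B_{31}&O\end{smallmatrix}\right)$, so, by Lemma~\ref{Th-Rank=DMPGI}, $\widehat E\overset{\tiny\mbox{\rm D\!}}\leq\widehat F$ holds iff $B_{11}=B_{13}=B_{31}=O$. Naming the still-free blocks of $F_0$ as $F_5,F_6,F_8$ and writing $M,N$ for the offsets of the $(2,1)$ and $(1,2)$ blocks of $F_0$ relative to those of $E_0$, these three conditions rewrite the $(1,1),(1,3),(3,1)$ blocks of $F_0$ into the expressions appearing in (\ref{fenjieshi-B}).

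For sufficiency, substitute the forms (\ref{fenjieshiA})--(\ref{fenjieshi-B}): Lemma~\ref{lemma-1} gives $E\leq F$ at once, and running the three rank computations of (i)--(iii) in reverse with these explicit blocks makes every leftover rank vanish, so the DMPGIs of $\widehat E$, $\widehat F$ and $\widehat F-\widehat E$ all exist; by Definition~\ref{Dual-minusPartialOrder-Def} (equivalently Theorem~\ref{nice-1}), $\widehat E\overset{\tiny\mbox{\rm D\!}}\leq\widehat F$.

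The main obstacle is the bookkeeping in step (iii): one carries a $2m\times2n$ bordered matrix with a $3\times3$ inner block structure through several rounds of block row and column operations while tracking which sub-blocks survive. The simplification that makes this tractable is to absorb $R$ and $S$ into the invertible factors $\widetilde P,\widetilde Q$ at the outset, which both diagonalizes $F-E$ and makes the elimination against the $D_2$-blocks transparent; the surviving $\{(1,1),(1,3),(3,1)\}$ pattern is then exactly the set of blocks constrained in (\ref{fenjieshi-B}).
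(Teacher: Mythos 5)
Your proposal is correct and follows essentially the same route as the paper: reduce the standard parts to the canonical minus form of Lemma~\ref{lemma-1}, then extract the three block constraints $F_1=E_1+RM+NS-RF_5S$, $F_3=E_3+RF_6$, $F_7=E_7+F_8S$ from the rank condition $\rk\left(\begin{smallmatrix}F_0-E_0&F-E\\ F-E&O\end{smallmatrix}\right)=2\rk(F-E)$ --- the paper carries out the same block elimination directly on the bordered matrix rather than through your unipotent factors $\widetilde P,\widetilde Q$, and it simply posits the zero patterns of $E_0$ and of the $(3,3)$ block of $F_0$ that you derive explicitly in steps (i)--(ii). The only slip is the sign in $\widetilde Q$: for $F-E=U\widetilde P\,\mathrm{diag}(O,D_2,O)\,\widetilde Q V^{\rm T}$ one needs $+S$ rather than $-S$ in the $(2,1)$ block (your $\widetilde Q$ is the inverse of the required factor), which does not affect the argument.
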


\begin{proof}
For $\widehat E\overset{\tiny\mbox{\rm  D \!}}\leq\widehat F$, we have $E\leq F$.
It follows from Lemma \ref{lemma-1} that $E$ and $F$  are as in (\ref{minus}).
Let $\widehat E$ and $\widehat F$ have the DMPGIs, and  denote
\begin{align}
\nonumber
&\widehat E=
U\begin{pmatrix}
D_1&O&O\\
O&O&O\\
O&O&O\end{pmatrix}V^{\rm{ T }}
+\varepsilon U
\begin{pmatrix}
E_1&E_2&E_3\\
E_4&O&O\\
E_7&O&O
\end{pmatrix}V^{\rm{ T }},
\\
\nonumber
&\widehat F=
U\begin{pmatrix}
D_1+RD_2S&RD_2&O\\
D_2S&D_2&O\\
O&O&O
\end{pmatrix}
V^{\rm{ T }}
+\varepsilon U\begin{pmatrix}
F_1&F_2&F_3\\
F_4&F_5&F_6\\
F_7&F_8&O\end{pmatrix}V^{\rm{ T }},
\end{align}
   where $E_1, F_1\in \mathbb{R}^{r_e\times r_e}$ and $F_5\in \mathbb{R}^{(r_f-r_e)\times (r_f-r_e)}$.
Then
\begin{align}
\label{okk}
&\widehat F-\widehat E=\left(F-E\right)+\varepsilon \left(F_0-E_0\right)
\\
\nonumber
&=
U\begin{pmatrix}
RD_2S&RD_2&O\\
D_2S&D_2&O\\
O&O&O
\end{pmatrix}
V^{\rm{ T }}
+\varepsilon U\begin{pmatrix}
F_1-E_1&F_2-E_2&F_3-E_3\\
F_4-E_4&F_5&F_6\\
F_7-E_7&F_8&O\end{pmatrix}V^{\rm{ T }}.
\end{align}
For
$\widehat E\overset{\tiny\mbox{\rm  D \!}}\leq\widehat F$,
 it follows from Definition \ref{Dual-minusPartialOrder-Def}
that  the DMPGI of $\widehat F-\widehat E$ exists.
By  applying Lemma \ref{Wang2021MAMT-Th2.1},
we have
\begin{align}
\label{okokok}
\rk
\begin{pmatrix}
F_0-E_0&F-E\\
F-E&O
\end{pmatrix}=2\rk\left(F-E\right).
\end{align}

Write  $M =F_4-E_4$ and
$ N =F_2-E_2$.
 Substituting (\ref{okk}) into (\ref{okokok}), we get
{\small
\begin{align*}
&\rk
\begin{pmatrix}
F_1-E_1&F_2-E_2&F_3-E_3&RD_2S&RD_2&O\\
F_4-E_4&F_5&F_6&D_2S&D_2&O\\
F_7-E_7&F_8&O&O&O&O\\
RD_2S&RD_2&O&O&O&O\\
D_2S&D_2&O&O&O&O\\
O&O&O&O&O&O
\end{pmatrix}
\\
\nonumber
&=
\rk
\begin{pmatrix}
F_1-E_1-RM-NS +RF_5S&O&F_3-E_3-RF_6&O&O&O\\
O&O&O&O&D_2&O\\
F_7-E_7-F_8S&O&O&O&O&O\\
O&O&O&O&O&O\\
O&D_2&O&O&O&O\\
O&O&O&O&O&O
\end{pmatrix}
=
2\rk\left(D_2\right).
\end{align*}}
Then applying (\ref{okokok}) gives
\begin{align*}
& F_1=E_1+RM+NS-RF_5S,\
 F_3=E_3+RF_6,\
 F_7=E_7+F_8S.
\end{align*}
Therefore, we get the form  (\ref{fenjieshi-B}) of $\widehat F$.

On the contrary,
we have $E \leq F$ and (\ref{okokok})  by  (\ref{fenjieshiA}) and  (\ref{fenjieshi-B}).
 According to  Lemma \ref{Wang2021MAMT-Th2.1},
 equation (\ref{okokok}) indicates the existence of DMPGI of $\widehat F-\widehat E$.
  Therefore,
$\widehat E\overset{\tiny\mbox{\rm D\!}}\leq\widehat F$
by Definition \ref{Dual-minusPartialOrder-Def}.
\end{proof}

Next, we  study relationships between  the Dual-minus partial order and the  D-star(P-star) partial order.

\begin{corollary}
\label{tui1}
Comparing Theorem  \ref{fenjie-the} with
Lemma \ref{PEIDSPO-Char-2-Th}
and Lemma \ref{peiDPSO-Char-1-Th} respectively,
we get

 (1) In Theorem \ref{fenjie-the}, giving
  $R=O$, $S=O$,  $M=-D_2E_2^{\rm{ T }}D_1^{-1}$ and $N=-D_1^{-1}E_4^{\rm{ T }}D_2$,
it is obvious that  the form of $\widehat F$ in (\ref{fenjieshi-B}) is the same as that in (\ref{PEIDSPO-Char-2}),
Therefore, $\widehat E\overset{\tiny\mbox{\rm  D\!-}\ast}\leq\widehat F$.

(2) In Theorem \ref{fenjie-the}, giving
$R=O$, $ S=O$, $M=O$ and $N=O$,
it is obvious that  the form of $\widehat F$ in (\ref{fenjieshi-B}) is the same as that in (\ref{peiDPSO-Char-1}),
Therefore, $\widehat E\overset{\tiny\mbox{\rm  P\!-}{*}}\leq\widehat F$.
\end{corollary}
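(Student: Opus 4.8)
The plan is to prove both parts by a direct, term‑by‑term comparison of block forms: specialise the canonical decomposition of $\widehat E$ and $\widehat F$ furnished by Theorem \ref{fenjie-the} to the indicated values of $R,S,M,N$, observe that the resulting forms coincide \emph{entry by entry} with the canonical forms appearing in Lemma \ref{PEIDSPO-Char-2-Th}(4) and Lemma \ref{peiDPSO-Char-1-Th}(4), and then read off the conclusion from the equivalence $(4)\Leftrightarrow(1)$ in those two lemmas. No new computation is needed beyond substitution.

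For part (1) I would first note that the form (\ref{fenjieshiA}) of $\widehat E$ in Theorem \ref{fenjie-the} is already \emph{verbatim} the form of $\widehat E$ in (\ref{PEIDSPO-Char-2}): same orthogonal matrices $U,V$, same block partition ($r_e\mid r_f-r_e\mid$ rest), same pattern of forced zeros, same free dual blocks $E_1,\dots,E_7$. Next, putting $R=O$ and $S=O$ in (\ref{fenjieshi-B}) collapses the standard part of $\widehat F$ to $U\bigl(\begin{smallmatrix}D_1&O&O\\ O&D_2&O\\ O&O&O\end{smallmatrix}\bigr)V^{\rm T}$, which is exactly the standard part of $\widehat F$ in (\ref{PEIDSPO-Char-2}), and reduces the dual part of $\widehat F$ in (\ref{fenjieshi-B}) to the block matrix whose $(1,2)$‑ and $(2,1)$‑entries are $E_2+N$ and $E_4+M$ while all other entries are $E_1,E_3,F_5,F_6,E_7,F_8,O$. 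Substituting $N=-D_1^{-1}E_4^{\rm T}D_2$ and $M=-D_2E_2^{\rm T}D_1^{-1}$ turns those two entries into $E_2-D_1^{-1}E_4^{\rm T}D_2$ and $E_4-D_2E_2^{\rm T}D_1^{-1}$, so the dual part now matches (\ref{PEIDSPO-Char-2}) as well. Along the way one checks that these choices of $M$ and $N$ are admissible, i.e. that they have the required sizes $(r_f-r_e)\times r_e$ and $r_e\times(r_f-r_e)$ — immediate from the sizes of $D_1,D_2,E_2,E_4$. Hence $(\widehat E,\widehat F)$ is in the form of Lemma \ref{PEIDSPO-Char-2-Th}(4), and that lemma gives $\widehat E\overset{\tiny\mbox{\rm D\!-}\ast}{\leq}\widehat F$.

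Part (2) is the same argument, only shorter. With $R=S=M=N=O$ the standard part of $\widehat F$ in (\ref{fenjieshi-B}) is again $U\bigl(\begin{smallmatrix}D_1&O&O\\ O&D_2&O\\ O&O&O\end{smallmatrix}\bigr)V^{\rm T}$ and the dual part reduces to the block matrix with rows $(E_1,E_2,E_3)$, $(E_4,F_5,F_6)$, $(E_7,F_8,O)$ — which is precisely the dual part of $\widehat F$ in (\ref{peiDPSO-Char-1}). Together with the already‑identical form of $\widehat E$, this exhibits $(\widehat E,\widehat F)$ in the canonical form of Lemma \ref{peiDPSO-Char-1-Th}(4), whence $\widehat E\overset{\tiny\mbox{\rm P\!-}{*}}{\leq}\widehat F$.

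The computation is entirely routine; I do not expect a genuine obstacle, only bookkeeping. The one point deserving care is structural rather than computational: one must be sure that the block partition, the fixed zero pattern in the dual part of $\widehat E$, and the labels $F_5,F_6,F_8$ for the free dual blocks of $\widehat F$ are used with the \emph{same} conventions in Theorem \ref{fenjie-the} and in Lemmas \ref{PEIDSPO-Char-2-Th} and \ref{peiDPSO-Char-1-Th}, so that ``the forms are the same'' really means coincidence entry by entry and not merely up to a further orthogonal change of basis. Since all three results descend from the minus‑order canonical form of Lemma \ref{lemma-1} under the same conventions, this consistency is automatic, and the verification goes through as described.
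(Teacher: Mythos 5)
Your proposal is correct and follows essentially the same route as the paper, which offers no separate proof beyond the direct substitution you carry out: setting $R=S=O$ and the indicated $M,N$ in (\ref{fenjieshi-B}) reproduces the canonical forms (\ref{PEIDSPO-Char-2}) and (\ref{peiDPSO-Char-1}) entry by entry, and the conclusion then follows from item (4) of Lemmas \ref{PEIDSPO-Char-2-Th} and \ref{peiDPSO-Char-1-Th}. Your added remark about checking that the block conventions agree across the three results is a reasonable precaution but introduces nothing beyond what the paper takes as given.
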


In the following Theorem \ref{The-nice1} and Theorem \ref{The-nice2},
we apply the Dual-minus partial order to provide characterizations of  the D-star and P-star partial orders.

\begin{theorem}
\label{The-nice1}
Let $\widehat E=E+\varepsilon E_0$,
$\widehat F=F+\varepsilon F_0\in\mathbb{D}^{m\times n}$,
and $\widehat E$ and $\widehat F$ have the DMPGIs.
 Then
$\widehat E\overset{\tiny\mbox{\rm  D\!-}\ast}\leq\widehat F$
  if and only if
 $\widehat E\overset{\tiny\mbox{\rm D\!}}\leq\widehat F$
 and
 \begin{align}
 \label{go}
 \begin{aligned}
EF^{\rm T} &=\left(EF^{\rm T} \right)^{\rm T}, \
 F^{\rm T}E= \left(F^{\rm T}E \right)^{\rm T},
 \\
EE^\dagger\left(F_0-E_0\right)&=\left(E^{\rm{ T }}\right)^\dagger E_0^{\rm{ T }}\left(E-F\right),\
\left(F_0-E_0\right)E^\dagger E=\left(E-F\right)E_0^{\rm{ T }}\left(E^{\rm{ T }}\right)^\dagger.
\end{aligned}
\end{align}
\end{theorem}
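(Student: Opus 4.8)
The plan is to characterize the D-star partial order as the conjunction of the (weaker) Dual-minus partial order together with exactly the symmetry/orthogonality conditions that upgrade a minus-type comparison to a star-type comparison, in both the standard and the dual parts. The natural strategy is to pass through the canonical forms: by Theorem~\ref{fenjie-the}, $\widehat E\overset{\tiny\mbox{\rm D\!}}\leq\widehat F$ exactly when $\widehat E,\widehat F$ have the block forms (\ref{fenjieshiA})--(\ref{fenjieshi-B}) relative to some orthogonal $U,V$, parametrized by $D_1,D_2,R,S,M,N$ and the $F_i$; and by Lemma~\ref{PEIDSPO-Char-2-Th}(4), $\widehat E\overset{\tiny\mbox{\rm  D\!-}\ast}\leq\widehat F$ exactly when they have the more rigid forms (\ref{PEIDSPO-Char-2}). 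Corollary~\ref{tui1}(1) already shows the D-star form is the special case $R=O$, $S=O$, $M=-D_2E_2^{\rm T}D_1^{-1}$, $N=-D_1^{-1}E_4^{\rm T}D_2$ of the Dual-minus form, so one direction (D-star $\Rightarrow$ Dual-minus) is immediate, and it remains only to check that those same four specializations are precisely what the extra conditions in (\ref{go}) encode.

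First I would prove the forward implication: assume $\widehat E\overset{\tiny\mbox{\rm  D\!-}\ast}\leq\widehat F$. Then $\widehat E\overset{\tiny\mbox{\rm D\!}}\leq\widehat F$ by the Remark following Theorem~\ref{nice-3} (the D-star order is Dual-minus-type), or equivalently by Corollary~\ref{tui1}(1). For the identities in (\ref{go}): from Lemma~\ref{PEIDSPO-Char-2-Th}(3) we have $\widehat E^{\rm T}\widehat E=\widehat E^{\rm T}\widehat F$ and $\widehat E\widehat E^{\rm T}=\widehat F\widehat E^{\rm T}$; comparing standard parts gives $E^{\rm T}E=E^{\rm T}F$ and $EE^{\rm T}=FE^{\rm T}$, hence $EF^{\rm T}=E(E^{\rm T})=\left(EE^{\rm T}\right)^{\rm T}=(FE^{\rm T})^{\rm T}$ is symmetric after using $FE^{\rm T}=EE^{\rm T}$, and likewise $F^{\rm T}E=E^{\rm T}E$ is symmetric; comparing dual parts gives $E^{\rm T}(F_0-E_0)=(F_0-E_0)^{\rm T}E$-type relations (more precisely $E^{\rm T}E_0+E_0^{\rm T}E$ matches $E^{\rm T}F_0+E_0^{\rm T}F$, and $E_0^{\rm T}E+E^{\rm T}E_0$ matches $F_0^{\rm T}E+F^{\rm T}E_0$), and then multiplying on the appropriate side by $EE^\dagger$ or $E^\dagger E$ and using $E^\dagger=(E^{\rm T})^\dagger E E^\dagger$-type identities together with the already-established standard-part relations $E^{\rm T}F=E^{\rm T}E$, $EF^{\rm T}=EE^{\rm T}$ isolates $EE^\dagger(F_0-E_0)$ and $(F_0-E_0)E^\dagger E$ in the stated form. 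It is cleanest to do this block-computation in the canonical form (\ref{PEIDSPO-Char-2}): there $EE^\dagger=U\,\mathrm{diag}(I_{r_e},O,O)\,U^{\rm T}$, $(E^{\rm T})^\dagger E_0^{\rm T}=U\,\mathrm{diag}(D_1^{-1}E_1^{\rm T}D_1^{-1},O,O)\cdots$-shaped, and $E-F$ has only a $-D_2$ block, so both sides of each identity in (\ref{go}) reduce to an explicit comparison of $r_e\times(r_f-r_e)$ blocks, namely $-D_1^{-1}E_4^{\rm T}D_2$ against itself, which holds.

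For the converse, assume $\widehat E\overset{\tiny\mbox{\rm D\!}}\leq\widehat F$ and (\ref{go}). Put $\widehat E,\widehat F$ in the Dual-minus canonical form (\ref{fenjieshiA})--(\ref{fenjieshi-B}). The standard-part conditions $EF^{\rm T}=(EF^{\rm T})^{\rm T}$ and $F^{\rm T}E=(F^{\rm T}E)^{\rm T}$ force, in that form, $RD_2=O$ and $D_2S=O$, hence $R=O$ and $S=O$ since $D_2$ is invertible; this collapses the standard parts to the star form $\mathrm{diag}(D_1,D_2,O)$ and also kills the $RM+NS-RF_5S$, $RF_6$, $F_8S$ perturbations, so $F_1=E_1$, $F_3=E_3$, $F_7=E_7$ already. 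With $R=S=O$ the dual part of $\widehat F$ is $\varepsilon U\,(\begin{smallmatrix}E_1&E_2+N&E_3\\ E_4+M&F_5&F_6\\ E_7&F_8&O\end{smallmatrix})\,V^{\rm T}$, and the two remaining identities in (\ref{go}) now read, after computing $EE^\dagger(F_0-E_0)=U\,(\begin{smallmatrix}O&N&O\\O&O&O\\O&O&O\end{smallmatrix})\,V^{\rm T}$ and $(E^{\rm T})^\dagger E_0^{\rm T}(E-F)=U\,(\begin{smallmatrix}O&-D_1^{-1}E_4^{\rm T}D_2&O\\O&O&O\\O&O&O\end{smallmatrix})\,V^{\rm T}$ (and symmetrically for the other one), exactly as $N=-D_1^{-1}E_4^{\rm T}D_2$ and $M=-D_2E_2^{\rm T}D_1^{-1}$. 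These are precisely the four specializations of Corollary~\ref{tui1}(1), so $\widehat F$ now has the form (\ref{PEIDSPO-Char-2}) and Lemma~\ref{PEIDSPO-Char-2-Th}(4)$\Rightarrow$(1) gives $\widehat E\overset{\tiny\mbox{\rm  D\!-}\ast}\leq\widehat F$.

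\textbf{Main obstacle.} The routine linear algebra of putting things in canonical form is harmless; the delicate point is bookkeeping the Moore--Penrose pseudoinverse factors $(E^{\rm T})^\dagger E_0^{\rm T}$ and $EE^\dagger$, $E^\dagger E$ in the perturbed frame so that the four scalar-block identities line up with the right signs and the right $D_1^{-1},D_2$ placements — in particular making sure that no contribution from the $(2,2)$, $(2,3)$, $(3,2)$ blocks $F_5,F_6,F_8$ of $F_0$ survives after multiplying by the range/corange projectors of $E$. I would handle this by always reducing to the explicit $3\times3$ block form before comparing, rather than manipulating the identities abstractly, since the abstract manipulation tends to require auxiliary facts like $E^\dagger=E^\dagger E E^{\rm T}(E^{\rm T})^\dagger$ that are themselves only transparent in block form.
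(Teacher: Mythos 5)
Your proposal is correct and follows essentially the same route as the paper: reduce to the canonical forms of Theorem \ref{fenjie-the} and Lemma \ref{PEIDSPO-Char-2-Th}, use the two symmetry conditions to force $R=S=O$ (the paper does this by invoking Lemma \ref{peiRStar-Def}(2)$\Leftrightarrow$(5) rather than your direct block computation, but the effect is identical), and then use the two remaining identities in (\ref{go}) to pin down $M=-D_2E_2^{\rm T}D_1^{-1}$ and $N=-D_1^{-1}E_4^{\rm T}D_2$, concluding via Corollary \ref{tui1}(1); the forward direction is likewise the same block verification in the form (\ref{PEIDSPO-Char-2}). No gaps.
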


\begin{proof}
``$\Leftarrow$'' \
Let $\widehat E=E+\varepsilon E_0$,
$\widehat F=F+\varepsilon F_0\in\mathbb{D}^{m\times n}$,
 $\widehat E\overset{\tiny\mbox{\rm D\!}}\leq\widehat F$
 and the forms of  $\widehat E$ and $\widehat F$ be as given in Theorem \ref{fenjie-the}, 
 where ${\rm \rk}\left( E \right)=r_e$ and ${\rm \rk}\left( F \right)=r_f$.

Since $\widehat E\overset{\tiny\mbox{\rm D\!}}\leq\widehat F$,  
we have $E \leq F$.
 Applying $EF^{\rm T}=\left(EF^{\rm T} \right)^{\rm T}$ and
$F^{\rm T}E=\left(F^{\rm T}E \right)^{\rm T}$,
it follows from Lemma \ref{peiRStar-Def}
that  $E\overset\ast\leq F$.
Furthermore, applying  (\ref{2.1}) gives   $S=O$ and $R=O$.
Then
\begin{align}
\label{fenjieshiAA}
&\widehat E=
U\begin{pmatrix}
D_1&O&O\\
O&O&O\\
O&O&O\end{pmatrix}V^{\rm{ T }}
+\varepsilon U
\begin{pmatrix}
E_1&E_2&E_3\\
E_4&O&O\\
E_7&O&O
\end{pmatrix}V^{\rm{ T }},
\\
\label{fenjieshiAB}
&\widehat F=
U\begin{pmatrix}
D_1&O&O\\
O&D_2&O\\
O&O&O
\end{pmatrix}
V^{\rm{ T }}
+\varepsilon U\begin{pmatrix}
E_1&N+E_2&E_3\\
M+E_4&F_5&F_6\\
E_7&F_8&O\end{pmatrix}V^{\rm{ T }},
\end{align}
where $M$, $N$, $F_5$, $F_6$ and $F_8$ are matrices of appropriate orders.

Substituting (\ref{fenjieshiAA}) and (\ref{fenjieshiAB})
 into
 $EE^\dagger\left(F_0-E_0\right)=\left(E^{\rm {\rm T}}\right)^\dagger E_0^{\rm T}\left(E-F\right)$
 and
$\left(F_0-E_0\right)E^\dagger E=\left(E-F\right)E_0^{{\rm T}}\left(E^{{\rm T}}\right)^\dagger$
gives
\begin{align*}
U\begin{pmatrix}
 I_{{\rk}\left( E \right)}&O&O\\
O&O&O\\
O&O&O
\end{pmatrix}
\begin{pmatrix}
O&N&O\\
M&F_5&F_6\\
O&F_8&O\end{pmatrix}V^{\rm T}
&=
U\begin{pmatrix}
D_1^{-1}&O&O\\
O&O&O\\
O&O&O
\end{pmatrix}
\begin{pmatrix}
E_1^{{\rm T}}&E_4^{{\rm T}}&E_7^{{\rm T}}\\
E_2^{{\rm T}}&O&O\\
E_3^{{\rm T}}&O&O
\end{pmatrix}
\begin{pmatrix}
O&O&O\\
O&{ -D_2}&O\\
O&O&O
\end{pmatrix}V^{\rm T},
\\
U\begin{pmatrix}
O&N&O\\
M&F_5&F_6\\
O&F_8&O\end{pmatrix}
\begin{pmatrix}
 I_{{\rk}\left( E \right)}&O&O\\
O&O&O\\
O&O&O
\end{pmatrix} V^{\rm T}
&
=
U\begin{pmatrix}
O&O&O\\
O&-D_2&O\\
O&O&O
\end{pmatrix}
\begin{pmatrix}
E_1^{{\rm T}}&E_4^{{\rm T}}&E_7^{{\rm T}}\\
E_2^{{\rm T}}&O&O\\
E_3^{{\rm T}}&O&O
\end{pmatrix}
\begin{pmatrix}
D_1^{-1}&O&O\\
O&O&O\\
O&O&O
\end{pmatrix}V^{\rm T},
\end{align*}
 that is,
$N=-D_1^{-1}E_4^{{\rm T}}D_2$ and
$M=-D_2E_2^{{\rm T}}D_1^{-1}$.
Therefore,
$\widehat E\overset{\tiny\mbox{\rm  D\!-}\ast}\leq\widehat F$ from Corollary \ref{tui1}(1).

 ``$\Rightarrow$''
\
 Let $\widehat E\overset{\tiny\mbox{\rm  D\!-}\ast}\leq\widehat F$.
Then $\widehat E\overset{\tiny\mbox{\rm D\!}}\leq\widehat F$
and the forms of $\widehat E$ and $\widehat F$  are as in   Lemma \ref{PEIDSPO-Char-2-Th}.
It follows that
\begin{align*}
EF^{\rm T}
=\left(EF^{\rm T} \right)^{\rm T}
& =U\begin{pmatrix}
D_1^2&O&O\\
O&O&O\\
O&O&O
\end{pmatrix}{ U^{\rm T}},\
F^{\rm T}E= \left(F^{\rm T}E \right)^{\rm T}
={ V}\begin{pmatrix}
D_1^2&O&O\\
O&O&O\\
O&O&O
\end{pmatrix}{ V^{\rm T}}
\\
 EE^\dagger\left(F_0-E_0\right)
 &
 =\left(E^{\rm{ T }}\right)^\dagger E_0^{\rm{ T }}\left(E-F\right)=
U\begin{pmatrix}
O&-D_1^{-1}E_4^{\rm T}D_2&O\\
O&O&O\\
O&O&O
\end{pmatrix}V^{\rm T},
\\
\left(F_0-E_0\right)E^\dagger E
&
=\left(E-F\right)E_0^{\rm{ T }}\left(E^{\rm{ T }}\right)^\dagger=U\begin{pmatrix}
O&O&O\\
-D_2E_2^{\rm T}D_1^{-1}&O&O\\
O&O&O
\end{pmatrix}V^{\rm T}.
\end{align*}
Therefore, (\ref{go}) holds.
\end{proof}

\begin{theorem}
\label{The-nice2}
Let $\widehat E=E+\varepsilon E_0$,
$\widehat F=F+\varepsilon F_0\in\mathbb{D}^{m\times n}$,
and $\widehat E$ and $\widehat F$ have the DMPGIs.
Then $\widehat E\overset{\tiny\mbox{\rm  P\!-}{*}}\leq\widehat F$ if and only if
$\widehat E\overset{\tiny\mbox{\rm D\!}}\leq\widehat F$ and
\begin{align}
\label{go1}
EF^{\rm T}=\left(EF^{\rm T} \right)^{\rm T}, \
F^{\rm T}E=\left(F^{\rm T}E \right)^{\rm T}, \
EE^\dagger\left(F_0-E_0\right)=O,\ \left(F_0-E_0\right)E^\dagger E=O.
\end{align}
\end{theorem}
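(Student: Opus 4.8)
The plan is to follow the pattern of the proof of Theorem \ref{The-nice1}, using Theorem \ref{fenjie-the} and Corollary \ref{tui1}(2) as the main instruments; the only change is that the scalars attached to the off-diagonal dual blocks are now forced to vanish rather than to equal $-D_1^{-1}E_4^{\rm T}D_2$ and $-D_2E_2^{\rm T}D_1^{-1}$.

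\textbf{``$\Leftarrow$''.} Suppose $\widehat E\overset{\tiny\mbox{\rm D\!}}\leq\widehat F$ and (\ref{go1}) hold; in particular $E\le F$. Together with the symmetry conditions $EF^{\rm T}=(EF^{\rm T})^{\rm T}$ and $F^{\rm T}E=(F^{\rm T}E)^{\rm T}$, Lemma \ref{peiRStar-Def} gives $E\overset\ast\le F$, and comparing the decomposition (\ref{2.1}) with the form supplied by Theorem \ref{fenjie-the} forces $R=O$ and $S=O$. With $R=S=O$ the dual part of $\widehat F-\widehat E$ reduces to $U\left(\begin{smallmatrix}O&N&O\\ M&F_5&F_6\\ O&F_8&O\end{smallmatrix}\right)V^{\rm T}$, while $EE^\dagger$ and $E^\dagger E$ read off from (\ref{2.1}) are the block projections onto the first $r_e$ coordinates in the $U$- and $V$-frames respectively. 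Substituting into $EE^\dagger(F_0-E_0)=O$ leaves the left-hand side $U\left(\begin{smallmatrix}O&N&O\\ O&O&O\\ O&O&O\end{smallmatrix}\right)V^{\rm T}$, so $N=O$; substituting into $(F_0-E_0)E^\dagger E=O$ leaves $U\left(\begin{smallmatrix}O&O&O\\ M&O&O\\ O&O&O\end{smallmatrix}\right)V^{\rm T}$, so $M=O$. With $R=S=M=N=O$, the form of $\widehat F$ in (\ref{fenjieshi-B}) is exactly that in (\ref{peiDPSO-Char-1}), hence $\widehat E\overset{\tiny\mbox{\rm P\!-}{*}}\leq\widehat F$ by Corollary \ref{tui1}(2).

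\textbf{``$\Rightarrow$''.} Suppose $\widehat E\overset{\tiny\mbox{\rm P\!-}{*}}\leq\widehat F$. By the Remark following Theorem \ref{nice-3}, the P-star partial order is a Dual-minus-type partial order, so $\widehat E\overset{\tiny\mbox{\rm D\!}}\leq\widehat F$. Taking $\widehat E$ and $\widehat F$ in the canonical form (\ref{peiDPSO-Char-1}) of Lemma \ref{peiDPSO-Char-1-Th}, a direct block multiplication gives $EF^{\rm T}=U\left(\begin{smallmatrix}D_1D_1^{\rm T}&O&O\\ O&O&O\\ O&O&O\end{smallmatrix}\right)U^{\rm T}$ and $F^{\rm T}E=V\left(\begin{smallmatrix}D_1^{\rm T}D_1&O&O\\ O&O&O\\ O&O&O\end{smallmatrix}\right)V^{\rm T}$, both of which are symmetric; and since the dual parts of $\widehat E$ and $\widehat F$ in (\ref{peiDPSO-Char-1}) share the same first block-row and first block-column, $F_0-E_0$ has vanishing first block-row and first block-column, whence $EE^\dagger(F_0-E_0)=O$ and $(F_0-E_0)E^\dagger E=O$. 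This establishes (\ref{go1}).

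All of this is routine block algebra, so I do not expect a genuine obstacle. The point that needs care — and the reason the statement is an equivalence rather than a one-sided implication — is that, once $R=S=O$ has been extracted from the star-order hypothesis, the two equations $EE^\dagger(F_0-E_0)=O$ and $(F_0-E_0)E^\dagger E=O$ isolate \emph{precisely} the off-diagonal dual blocks $N$ and $M$ and impose no constraint on $F_5,F_6,F_8$; confirming this (which is exactly what distinguishes the P-star case from the D-star case of Theorem \ref{The-nice1}) is what makes the characterization tight rather than merely sufficient.
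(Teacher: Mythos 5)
Your proposal is correct and follows essentially the same route as the paper's own proof: both directions use Theorem \ref{fenjie-the} together with Lemma \ref{peiRStar-Def} to force $R=S=O$, then extract $M=N=O$ from the two annihilation conditions and invoke Corollary \ref{tui1}(2), with the converse read off from the canonical form in Lemma \ref{peiDPSO-Char-1-Th}. (Your $D_1D_1^{\rm T}$ is in fact slightly more careful than the paper's $D_1^2$, since $D_1$ need not be symmetric.)
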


\begin{proof}
``$\Leftarrow$'' \
Let $\widehat E=E+\varepsilon E_0$,
$\widehat F=F+\varepsilon F_0\in\mathbb{D}^{m\times n}$,
 $\widehat E\overset{\tiny\mbox{\rm D\!}}\leq\widehat F$
 and the forms of  $\widehat E$ and $\widehat F$ be as given in Theorem \ref{fenjie-the}.
Let $EF^{\rm T}=\left(EF^{\rm T} \right)^{\rm T}$ and
$F^{\rm T}E=\left(F^{\rm T}E \right)^{\rm T}$.
Then we have (\ref{fenjieshiAA}) and (\ref{fenjieshiAB}).
Substituting (\ref{fenjieshiAA}) and (\ref{fenjieshiAB})
 into
$EE^\dagger\left(F_0-E_0\right)=O$
and
$\left(F_0-E_0\right)E^\dagger E=O$
gives  $N=O$ and  $M=O$.
According to Corollary \ref{tui1}(2), we get
$\widehat E\overset{\tiny\mbox{\rm  P \!-}*}\leq\widehat F$.

``$\Rightarrow$'' \
Let $\widehat E\overset{\tiny\mbox{\rm  P\!-}{*}}\leq\widehat F$.
Then $\widehat E\overset{\tiny\mbox{\rm D\!}}\leq\widehat F$,
 and the forms of  $\widehat E$ and $\widehat F$ are as given in Lemma \ref{peiDPSO-Char-1-Th}.
 It follows that
 $EF^{\rm T} 
 =\left(EF^{\rm T} \right)^{\rm T}
 =U\left(\begin{smallmatrix}
D_1^2&O&O\\
O&O&O\\
O&O&O
\end{smallmatrix}\right)U^{\rm T}$,
 $F^{\rm T}E
 = \left(F^{\rm T}E \right)^{\rm T}
 =V\left(\begin{smallmatrix}
D_1^2&O&O\\
O&O&O\\
O&O&O
\end{smallmatrix}\right)V^{\rm T}$,
 $EE^\dagger\left(F_0-E_0\right)=O$
 and
$\left(F_0-E_0\right)E^\dagger E=O$.
Therefore, (\ref{go1}) holds.
\end{proof}

 Next, we consider the
 relationships between the Dual-minus partial order and 
 the  D-sharp(G-sharp) partial order. 
 And we apply the Dual-minus partial order to characterizing the D-sharp partial order and G-star partial order.
Firstly, 
a characterization  for the Dual-minus partial order of a dual matrix with an index one is given.
\begin{theorem}
\label{fenjie-2}
Let $\widehat  E=E+\varepsilon E_0, \widehat  F=F+\varepsilon F_0 \in\mathbb{D}_n^{\tiny \mbox{\rm CM}}$.
  Then $\widehat E\overset{\tiny\mbox{\rm  D \!}}\leq\widehat F$
  if and only if
 {\small
\begin{align}
\label{shizi}
&\widehat E=
P\begin{pmatrix}
D_1&O&O\\
O&O&O\\
O&O&O
\end{pmatrix}
P^{-1}
+\varepsilon P\begin{pmatrix}
E_1&E_2&E_3\\
E_4&O&O\\
E_7&O&O\end{pmatrix}P^{-1},
\\
\label{MPO-20230823-2}
&\widehat F=
P\begin{pmatrix}
D_1+RD_2S&RD_2&O\\
D_2S&D_2&O\\
O&O&O
\end{pmatrix}
P^{-1}
+\varepsilon P\begin{pmatrix}
E_1+RM+NS -RF_5S& N+E_2&E_3+RF_6\\
M+ E_4&F_5&F_6\\
E_7+F_8S&F_8&O\end{pmatrix}P^{-1},
\end{align}}
where $D_1\in \mathbb{R}^{r_e\times r_e}$, $D_2\in \mathbb{R}^{(r_f-r_e)\times (r_f-r_e)}$ are invertible,
   and $M, S\in\mathbb{R}^{(r_f-r_e)\times r_e}$, $N, R\in\mathbb{R}^{r_e\times (r_f-r_e)}$ are some suitable matrices.
\end{theorem}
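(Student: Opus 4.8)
The plan is to imitate the proof of Theorem \ref{fenjie-the}, with the pair of orthogonal matrices replaced by a single nonsingular matrix $P$ acting by similarity, and with the index-one hypothesis $\widehat E,\widehat F\in\mathbb{D}_n^{\tiny\mbox{\rm CM}}$ used to force the lower-right blocks of the two dual parts to vanish.

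For the forward implication, assume $\widehat E\overset{\tiny\mbox{\rm D\!}}\leq\widehat F$. Since $\widehat E,\widehat F\in\mathbb{D}_n^{\tiny\mbox{\rm CM}}$ their dual indices equal one, so (by the characterization of the DGGI recalled in the introduction) the standard parts satisfy $\Ind(E)=\Ind(F)=1$, i.e.\ $E,F\in\mathbb{R}_n^{\tiny\mbox{\rm CM}}$, together with $(I_n-EE^{\#})E_0(I_n-E^{\#}E)=O$ and $(I_n-FF^{\#})F_0(I_n-F^{\#}F)=O$. From $E\leq F$ and Lemma \ref{xiaotuilun} there is a nonsingular $P$ carrying $E$ and $F$ simultaneously into the shapes in (\ref{TUIminus}); with respect to the induced $(r_e,\,r_f-r_e,\,n-r_f)$ block partition one has $EE^{\#}=E^{\#}E=P\,\mathrm{diag}(I_{r_e},O,O)\,P^{-1}$ and $FF^{\#}=F^{\#}F=P\,\mathrm{diag}(I_{r_e},I_{r_f-r_e},O)\,P^{-1}$. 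Substituting these into the two index-one identities forces the $2\times2$ lower-right block of $P^{-1}E_0P$ to vanish and the $(3,3)$ block of $P^{-1}F_0P$ to vanish, which yields (\ref{shizi}) and the block shape of $F_0$ used in (\ref{MPO-20230823-2}).

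Then, since the DMPGI of $\widehat F-\widehat E$ exists, Lemma \ref{Wang2021MAMT-Th2.1} gives $\rk\left(\begin{smallmatrix}F_0-E_0&F-E\\F-E&O\end{smallmatrix}\right)=2\rk(F-E)$. Conjugating this $2n\times2n$ block by $\mathrm{diag}(P,P)$ (which preserves rank) and performing the same block row/column eliminations as in the proof of Theorem \ref{fenjie-the} --- using only that $D_2$ is invertible --- reduces the matrix to a core built from $D_2$ plus the Schur-type blocks $F_1-E_1-RM-NS+RF_5S$, $F_3-E_3-RF_6$ and $F_7-E_7-F_8S$, where $M:=F_4-E_4$ and $N:=F_2-E_2$. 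The rank equality forces these three blocks to be zero, which is exactly (\ref{MPO-20230823-2}). Conversely, from (\ref{shizi}) and (\ref{MPO-20230823-2}) the standard parts are visibly of the form (\ref{TUIminus}), so $E\leq F$ by Lemma \ref{xiaotuilun}, and a direct computation of $\rk\left(\begin{smallmatrix}F_0-E_0&F-E\\F-E&O\end{smallmatrix}\right)$ from these forms gives $2\rk(F-E)$; hence by Lemma \ref{Wang2021MAMT-Th2.1} the DMPGI of $\widehat F-\widehat E$ exists, and Definition \ref{Dual-minusPartialOrder-Def} then gives $\widehat E\overset{\tiny\mbox{\rm D\!}}\leq\widehat F$.

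I expect the main obstacle to be the block manipulation that pins down $F_0$: one must carefully separate the entries of $P^{-1}F_0P$ that the rank identity genuinely constrains from the free parameters $F_5,F_6,F_8$, ensuring the elimination steps invoke only invertibility of $D_2$ (not $D_1$) and respect the three-fold partition. This is the same bookkeeping as in Theorem \ref{fenjie-the}, so the genuinely new content is just the reduction from two orthogonal matrices to one similarity $P$ and the extraction of the vanishing lower-right blocks from the index-one conditions.
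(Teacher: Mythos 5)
Your proposal is correct and follows essentially the same route as the paper: Lemma \ref{xiaotuilun} for the standard parts, the index-one conditions to constrain the block shapes of the dual parts, and the same rank/block-elimination argument as in Theorem \ref{fenjie-the} to pin down $F_1$, $F_3$, $F_7$. The only difference is that the paper simply asserts the constrained block forms of $E_0$ and $F_0$ and defers the elimination to ``similar to Theorem \ref{fenjie-the}'', whereas you spell out how $(I_n-EE^{\#})E_0(I_n-E^{\#}E)=O$ and its analogue for $\widehat F$ produce those shapes --- a welcome elaboration, not a different proof.
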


\begin{proof}
 According to Lemma \ref{xiaotuilun}, we have the forms of $E$ and $F$ as  in  (\ref{TUIminus}).
Let  
$\widehat  E=E+\varepsilon E_0\in\mathbb{D}_n^{\tiny \mbox{\rm CM}}$  
and 
$\widehat  F=F+\varepsilon F_0 \in\mathbb{D}_n^{\tiny \mbox{\rm CM}}$.
Then
\begin{align}
\nonumber
&\widehat E=
P\begin{pmatrix}
D_1&O&O\\
O&O&O\\
O&O&O\end{pmatrix}P^{-1}
+\varepsilon P
\begin{pmatrix}
E_1&E_2&E_3\\
E_4&O&O\\
E_7&O&O
\end{pmatrix}P^{-1},
\\
\nonumber
&\widehat F=
P\begin{pmatrix}
D_1+RD_2S&RD_2&O\\
D_2S&D_2&O\\
O&O&O
\end{pmatrix}
P^{-1}
+\varepsilon P\begin{pmatrix}
F_1&F_2&F_3\\
F_4&F_5&F_6\\
F_7&F_8&O\end{pmatrix}P^{-1}.
\end{align}
Similar to the proof process of Theorem \ref{fenjie-the}, 
we can get (\ref{shizi}) and (\ref{MPO-20230823-2}).

On the contrary,
let the forms of $\widehat E$ and $\widehat F$ be as   (\ref{shizi}) and (\ref{MPO-20230823-2}).
Then $\widehat E\overset{\tiny\mbox{\rm  D \!}}\leq\widehat F$ clearly holds 
by Definition \ref{Dual-minusPartialOrder-Def}.
\end{proof}

\begin{corollary}
\label{co-2}
Comparing Theorem  \ref{fenjie-2}
with
Lemma \ref{TIANHEDSPO-Char-2-Th}
and Lemma \ref{tianheDPSPO-Char-1-Th} respectively,
we have

(1) In Theorem \ref{fenjie-2}, giving
 $R=O$, $ S=O$, $M=-D_2E_4D_1^{-1}$ and $N=-D_1^{-1}E_2D_2$,
 it is  obvious that  the form  (\ref{MPO-20230823-2}) of $\widehat F$
 is the same as that in (\ref{pei-DSPO-Char-2}). 
  Therefore, $\widehat E\overset{\tiny\mbox{\rm  D \!-}\#}\leq\widehat F$.

(2) In Theorem  \ref{fenjie-2}, giving
 $R=O$, $S=O$ and $M=O,\ N=O$,
 it is obvious that  the form (\ref{MPO-20230823-2}) of $\widehat F$
 is the same as that in (\ref{DPSPO-Char-1-1}).  Therefore, $\widehat E\overset{\tiny\mbox{\rm  G \!-}\#}\leq\widehat F$.
\end{corollary}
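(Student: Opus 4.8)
The plan is to argue exactly as in Corollary \ref{tui1}: invoke Theorem \ref{fenjie-2} to write $\widehat E$ and $\widehat F$ in the canonical form (\ref{shizi})--(\ref{MPO-20230823-2}) (this is legitimate since $\widehat E,\widehat F\in\mathbb{D}_n^{\tiny \mbox{\rm CM}}$ and $\widehat E\overset{\tiny\mbox{\rm  D \!}}\leq\widehat F$), substitute the prescribed values of $R,S,M,N$, simplify the block entries, and verify that the outcome coincides entry-by-entry with the canonical form occurring in item (4) of Lemma \ref{TIANHEDSPO-Char-2-Th} for part (1), and in item (4) of Lemma \ref{tianheDPSPO-Char-1-Th} for part (2). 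Since those canonical forms are equivalent characterizations of $\widehat E\overset{\tiny\mbox{\rm  D \!-}\#}\leq\widehat F$ and $\widehat E\overset{\tiny\mbox{\rm  G \!-}\#}\leq\widehat F$ respectively, the two conclusions follow immediately.

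For part (1) I would first set $R=O$ and $S=O$ in (\ref{MPO-20230823-2}). Then $RD_2S=RD_2=D_2S=O$, so the standard part of $\widehat F$ collapses to $P\left(\begin{smallmatrix}D_1&O&O\\O&D_2&O\\O&O&O\end{smallmatrix}\right)P^{-1}$, matching the standard part in (\ref{pei-DSPO-Char-2}); at the same time the compound terms $RM$, $NS$, $RF_5S$, $RF_6$, $F_8S$ in the dual block all vanish, leaving its $(1,2)$ and $(2,1)$ entries equal to $N+E_2$ and $M+E_4$ and every other entry already in agreement with (\ref{pei-DSPO-Char-2}). Inserting $N=-D_1^{-1}E_2D_2$ and $M=-D_2E_4D_1^{-1}$ converts those two entries into $E_2-D_1^{-1}E_2D_2$ and $E_4-D_2E_4D_1^{-1}$, which is precisely the dual block of $\widehat F$ in (\ref{pei-DSPO-Char-2}). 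As $\widehat E$ in (\ref{shizi}) already coincides with $\widehat E$ in (\ref{pei-DSPO-Char-2}), the pair $(\widehat E,\widehat F)$ satisfies item (4) of Lemma \ref{TIANHEDSPO-Char-2-Th}, hence $\widehat E\overset{\tiny\mbox{\rm  D \!-}\#}\leq\widehat F$.

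Part (2) is the same computation with the extra choices $M=O$, $N=O$: once $R=O$, $S=O$ have killed the compound terms as above, the entries $N+E_2$ and $M+E_4$ reduce to $E_2$ and $E_4$, so the dual block of $\widehat F$ becomes $\left(\begin{smallmatrix}E_1&E_2&E_3\\E_4&F_5&F_6\\E_7&F_8&O\end{smallmatrix}\right)$, which is exactly the dual block in (\ref{DPSPO-Char-1-1}); together with the matching standard parts this is item (4) of Lemma \ref{tianheDPSPO-Char-1-Th}, i.e.\ $\widehat E\overset{\tiny\mbox{\rm  G \!-}\#}\leq\widehat F$. There is no genuine obstacle here: the whole argument is substitution and comparison, and the only point deserving care is the bookkeeping that, when $R=O$ or $S=O$, every term in the dual block of $\widehat F$ not present in that of $\widehat E$ either disappears or is one of the free blocks $F_5,F_6,F_8$, so that after substitution the two forms line up exactly with those of the cited lemmas.
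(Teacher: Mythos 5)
Your proposal is correct and follows exactly the route the paper intends: the corollary is justified in the paper purely by the substitution $R=O$, $S=O$ and the stated choices of $M,N$ into (\ref{MPO-20230823-2}), followed by entry-by-entry comparison with the canonical forms in Lemma \ref{TIANHEDSPO-Char-2-Th}(4) and Lemma \ref{tianheDPSPO-Char-1-Th}(4). Your bookkeeping of the vanishing compound terms and the resulting $(1,2)$, $(2,1)$ blocks matches the paper's forms exactly, so nothing is missing.
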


Next, 
we apply the Dual-minus partial order to provide 
 characterizations of  the  D-sharp partial order and the  G-sharp partial order, respectively.

\begin{theorem}
\label{The-nice3}
 Let $\widehat E=E+\varepsilon E_0$,
$\widehat F=F+\varepsilon F_0\in\mathbb{D}_n^{\tiny \mbox{\rm CM}}$.
Then $\widehat E\overset{\tiny\mbox{\rm  D \!-}\#}\leq\widehat F$ if and only if
 $\widehat E\overset{\tiny\mbox{\rm D\!}}\leq\widehat F$ and
  \begin{align}
  \label{go2}
  EF=FE,\   \left(F_0-E_0\right)EE^{\#}=\left(E-F\right)E_0E^{\#},
  \
  EE^{\#}\left(F_0-E_0\right)=E^{\#} E_0\left(E-F\right).
  \end{align}
\end{theorem}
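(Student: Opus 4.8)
The plan is to follow the pattern of Theorem~\ref{The-nice1}, using the index-one canonical form of Theorem~\ref{fenjie-2} together with Corollary~\ref{co-2}(1), and reading off the D-sharp form of Lemma~\ref{TIANHEDSPO-Char-2-Th} for the reverse implication.

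For the direction ``$\Leftarrow$'', I would assume $\widehat E\overset{\tiny\mbox{\rm D\!}}\leq\widehat F$ and write $\widehat E$, $\widehat F$ in the forms (\ref{shizi})--(\ref{MPO-20230823-2}) with free parameters $D_1,D_2,R,S,M,N,F_5,F_6,F_8$. A block multiplication of the standard parts shows that $EF-FE$ is similar (via $P$) to a matrix whose $(1,2)$ block is $D_1RD_2$ and whose $(2,1)$ block is $-D_2SD_1$; since $D_1,D_2$ are invertible, the hypothesis $EF=FE$ forces $R=O$ and $S=O$ (equivalently $E\overset{\#}\leq F$, cf.\ Lemma~\ref{bbbbbb}). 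With $R=S=O$ the decomposition collapses to $F_0-E_0=P\left(\begin{smallmatrix}O&N&O\\M&F_5&F_6\\O&F_8&O\end{smallmatrix}\right)P^{-1}$, $E-F=P\left(\begin{smallmatrix}O&O&O\\O&-D_2&O\\O&O&O\end{smallmatrix}\right)P^{-1}$, $EE^{\#}=E^{\#}E=P\left(\begin{smallmatrix}I&O&O\\O&O&O\\O&O&O\end{smallmatrix}\right)P^{-1}$ and $E^{\#}=P\left(\begin{smallmatrix}D_1^{-1}&O&O\\O&O&O\\O&O&O\end{smallmatrix}\right)P^{-1}$. Substituting these into $\left(F_0-E_0\right)EE^{\#}=\left(E-F\right)E_0E^{\#}$ and comparing the $(2,1)$ blocks gives $M=-D_2E_4D_1^{-1}$, and substituting into $EE^{\#}\left(F_0-E_0\right)=E^{\#}E_0\left(E-F\right)$ and comparing the $(1,2)$ blocks gives $N=-D_1^{-1}E_2D_2$. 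With $R=S=O$, $M=-D_2E_4D_1^{-1}$, $N=-D_1^{-1}E_2D_2$, Corollary~\ref{co-2}(1) yields $\widehat E\overset{\tiny\mbox{\rm  D \!-}\#}\leq\widehat F$.

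For the direction ``$\Rightarrow$'', the Remark following Theorem~\ref{nice-3} already gives $\widehat E\overset{\tiny\mbox{\rm D\!}}\leq\widehat F$, so I only need to verify (\ref{go2}). Using Lemma~\ref{TIANHEDSPO-Char-2-Th}, write $\widehat E$, $\widehat F$ in the form (\ref{pei-DSPO-Char-2}); part~(3) of that lemma gives $EF=E^2=FE$, hence $EF=FE$. Reading off the blocks of $F_0-E_0$, $E-F$, $E_0$, $E^{\#}$ and $EE^{\#}$ from (\ref{pei-DSPO-Char-2}) and multiplying, one finds that both sides of $\left(F_0-E_0\right)EE^{\#}=\left(E-F\right)E_0E^{\#}$ equal $P\left(\begin{smallmatrix}O&O&O\\-D_2E_4D_1^{-1}&O&O\\O&O&O\end{smallmatrix}\right)P^{-1}$ and both sides of $EE^{\#}\left(F_0-E_0\right)=E^{\#}E_0\left(E-F\right)$ equal $P\left(\begin{smallmatrix}O&-D_1^{-1}E_2D_2&O\\O&O&O\\O&O&O\end{smallmatrix}\right)P^{-1}$, which establishes (\ref{go2}).

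Everything here reduces to routine block-matrix arithmetic; the one step that deserves care is the implication $EF=FE\Rightarrow R=S=O$, i.e.\ recognising that the standard-part commutation collapses the minus-order parameters of Theorem~\ref{fenjie-2} onto the sharp-order ones, and the (harmless) bookkeeping that the same similarity $P$ and the same blocks $D_1,D_2,E_i$ may be used simultaneously in Theorem~\ref{fenjie-2} and in Lemma~\ref{TIANHEDSPO-Char-2-Th}, which is guaranteed because both rest on the reduction in Lemma~\ref{xiaotuilun}.
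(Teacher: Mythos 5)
Your proposal is correct and follows essentially the same route as the paper's own proof: the forward direction reads off (\ref{go2}) from the canonical form (\ref{pei-DSPO-Char-2}) of Lemma \ref{TIANHEDSPO-Char-2-Th}, and the reverse direction uses the form in Theorem \ref{fenjie-2}, deduces $R=S=O$ from $EF=FE$, and then extracts $M=-D_2E_4D_1^{-1}$ and $N=-D_1^{-1}E_2D_2$ before invoking Corollary \ref{co-2}(1). Your explicit block computation justifying $EF=FE\Rightarrow R=S=O$ is a welcome elaboration of a step the paper states without detail, but it is not a different method.
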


\begin{proof}
``$\Leftarrow$''
\
Let $\widehat E=E+\varepsilon E_0$,
$\widehat F=F+\varepsilon F_0\in\mathbb{D}_n^{\tiny \mbox{\rm CM}}$,
 $\widehat E\overset{\tiny\mbox{\rm D\!}}\leq\widehat F$ and
  the forms of $\widehat E$ and $\widehat F$ be as  in  (\ref{shizi}) and (\ref{MPO-20230823-2}),  respectively.
It follows from $EF=FE$ that $S=O$ and $R=O$.
Then
\begin{align}
\label{fenjieshiAAaa}
&\widehat E=
P\begin{pmatrix}
D_1&O&O\\
O&O&O\\
O&O&O\end{pmatrix}P^{-1}
+\varepsilon P
\begin{pmatrix}
E_1&E_2&E_3\\
E_4&O&O\\
E_7&O&O
\end{pmatrix}P^{-1},
\\
\label{fenjieshiABaa}
&\widehat F=
P\begin{pmatrix}
D_1&O&O\\
O&D_2&O\\
O&O&O
\end{pmatrix}
P^{-1}
+\varepsilon P\begin{pmatrix}
E_1&N+E_2&E_3\\
M+E_4&F_5&F_6\\
E_7&F_8&O\end{pmatrix}P^{-1},
\end{align}
 where $M\in\mathbb{R}^{(r_f-r_e)\times r_e}$ and $N\in\mathbb{R}^{r_e\times (r_f-r_e)}$.

Substituting (\ref{fenjieshiAAaa}) and (\ref{fenjieshiABaa})  into
 $\left(F_0-E_0\right)EE^{\#}=\left(E-F\right)E_0E^{\#}$
 and
 $EE^{\#}\left(F_0-E_0\right)=E^{\#} E_0\left(E-F\right)$  gives
 \begin{align*}
 P
\begin{pmatrix}
O&N&O\\
M&F_5&F_6\\
O&F_8&O\end{pmatrix}
\begin{pmatrix}
 I_{{\rk}\left( E \right)}&O&O\\
O&O&O\\
O&O&O
\end{pmatrix}P^{-1}
&=
P\begin{pmatrix}
O&O&O\\
O&-D_2&O\\
O&O&O
\end{pmatrix}
\begin{pmatrix}
E_1&E_2&E_3\\
E_4&O&O\\
E_7&O&O
\end{pmatrix}
\begin{pmatrix}
D_1^{-1}&O&O\\
O&O&O\\
O&O&O
\end{pmatrix}P^{-1}
\\
P
\begin{pmatrix}
 I_{{\rk}\left( E \right)}&O&O\\
O&O&O\\
O&O&O
\end{pmatrix}
\begin{pmatrix}
O&N&O\\
M&F_5&F_6\\
O&F_8&O\end{pmatrix}P^{-1}
&=
P\begin{pmatrix}
D_1^{-1}&O&O\\
O&O&O\\
O&O&O
\end{pmatrix}
\begin{pmatrix}
E_1&E_2&E_3\\
E_4&O&O\\
E_7&O&O
\end{pmatrix}
\begin{pmatrix}
O&O&O\\
O&-D_2&O\\
O&O&O
\end{pmatrix}
P^{-1}.
\end{align*}
It follows that
 $M=-D_2E_4D_1^{-1}$ and   $N=-D_1^{-1}E_2D_2$.
Therefore, $\widehat E\overset{\tiny\mbox{\rm  D \!-}\#}\leq\widehat F$ from Corollary \ref{co-2}(1).

``$\Rightarrow$'' \
Let $\widehat E\overset{\tiny\mbox{\rm  D\!-}\#}\leq\widehat F$, and
 the forms of $\widehat E$ and $\widehat F$ be as given in  (\ref{TIANHEDSPO-Char-2-Th}).
Then
\begin{align*}
EF
&
=FE
=P\begin{pmatrix}
D_1^2&O&O\\
O&O&O\\
O&O&O
\end{pmatrix}P^{-1},
\\
\left(F_0-E_0\right)EE^{\#}
&
=\left(E-F\right)E_0E^{\#}=
P\begin{pmatrix}
O&O&O\\
-D_2E_4D_1^{-1}&O&O\\
O&O&O
\end{pmatrix}P^{-1},
\\
EE^{\#}\left(F_0-E_0\right)
&
=E^{\#} E_0\left(E-F\right)
 =P\begin{pmatrix}
O&-D_1^{-1}E_2D_2&O\\
O&O&O\\
O&O&O
\end{pmatrix}P^{-1}.
\end{align*}
Therefore, (\ref{go2}) holds.
\end{proof}

\begin{theorem}
\label{The-nice4}
Let $\widehat E=E+\varepsilon E_0$,
$\widehat F=F+\varepsilon F_0\in\mathbb{D}_n^{\tiny \mbox{\rm CM}}$.
Then $\widehat E\overset{\tiny\mbox{\rm  G \!-}\#}\leq\widehat F$ if and only if
 $\widehat E\overset{\tiny\mbox{\rm D\!}}\leq\widehat F$ and
\begin{align}
\label{gogo}
EF=FE, \
\left(F_0-E_0\right)EE^{\#}=O, \
EE^{\#}\left(F_0-E_0\right)=O.
\end{align}
\end{theorem}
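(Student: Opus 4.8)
The plan is to follow the proof of Theorem~\ref{The-nice3} almost line by line, with the D-sharp canonical form of Lemma~\ref{TIANHEDSPO-Char-2-Th} replaced by the G-sharp canonical form of Lemma~\ref{tianheDPSPO-Char-1-Th}, and with Corollary~\ref{co-2}(1) replaced by Corollary~\ref{co-2}(2). Since $\widehat E,\widehat F\in\mathbb{D}_n^{\tiny \mbox{\rm CM}}$, their DMPGIs exist, so Theorem~\ref{fenjie-2} and all the characterizations quoted in Section~\ref{Section-2-Preliminaries} are available.

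For the direction ``$\Leftarrow$'', suppose $\widehat E\overset{\tiny\mbox{\rm D\!}}\leq\widehat F$ together with (\ref{gogo}). By Theorem~\ref{fenjie-2} I would write $\widehat E$ and $\widehat F$ in the block forms (\ref{shizi}) and (\ref{MPO-20230823-2}) relative to a common nonsingular $P$. Comparing the $(1,2)$ and $(2,1)$ blocks of the standard parts, the hypothesis $EF=FE$ together with the invertibility of $D_1$ and $D_2$ forces $R=O$ and $S=O$, exactly as in Theorem~\ref{The-nice3}; the forms then reduce to the analogues of (\ref{fenjieshiAAaa}) and (\ref{fenjieshiABaa}), so that $F_0-E_0=P\left(\begin{smallmatrix}O&N&O\\ M&F_5&F_6\\ O&F_8&O\end{smallmatrix}\right)P^{-1}$, while $EE^{\#}=P\left(\begin{smallmatrix}I_{r_e}&O&O\\ O&O&O\\ O&O&O\end{smallmatrix}\right)P^{-1}$. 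Substituting into (\ref{gogo}), I get that $\left(F_0-E_0\right)EE^{\#}=O$ is equivalent to $M=O$ (it picks out the first block column of the dual part of $\widehat F-\widehat E$) and that $EE^{\#}\left(F_0-E_0\right)=O$ is equivalent to $N=O$ (it picks out the first block row). With $R=S=M=N=O$, the forms (\ref{shizi})--(\ref{MPO-20230823-2}) coincide with (\ref{DPSPO-Char-1-1}), so Corollary~\ref{co-2}(2) gives $\widehat E\overset{\tiny\mbox{\rm  G \!-}\#}\leq\widehat F$.

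For the direction ``$\Rightarrow$'', suppose $\widehat E\overset{\tiny\mbox{\rm  G \!-}\#}\leq\widehat F$. Since the G-sharp partial order is a Dual-minus-type partial order (established in the Remark following Theorem~\ref{nice-3} via Lemma~\ref{tianheDPSPO-Char-1-Th} and Definition~\ref{Dual-minusPartialOrder-Def}), we have $\widehat E\overset{\tiny\mbox{\rm D\!}}\leq\widehat F$ at once. Using the canonical forms (\ref{DPSPO-Char-1-1}) of Lemma~\ref{tianheDPSPO-Char-1-Th}, a direct block multiplication gives $EF=FE=P\left(\begin{smallmatrix}D_1^2&O&O\\ O&O&O\\ O&O&O\end{smallmatrix}\right)P^{-1}$; and since in (\ref{DPSPO-Char-1-1}) the first block row and the first block column of the dual part of $\widehat F-\widehat E$ both vanish, we also get $\left(F_0-E_0\right)EE^{\#}=O$ and $EE^{\#}\left(F_0-E_0\right)=O$. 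Hence (\ref{gogo}) holds, completing the proof.

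Every step here is routine block arithmetic; the one point deserving care is the bookkeeping in the ``$\Leftarrow$'' direction: once $R=S=O$, the condition $\left(F_0-E_0\right)EE^{\#}=O$ amounts precisely to $M=O$ (right multiplication by $EE^{\#}$ retains only the first block column of the dual part) while $EE^{\#}\left(F_0-E_0\right)=O$ amounts precisely to $N=O$ (left multiplication retains only the first block row), so that (\ref{gogo}) is exactly the vanishing of the two off-diagonal dual blocks that separate the Dual-minus form from the G-sharp form. An off-by-one in the block indexing would break this, so I would verify it explicitly, but there is no genuine conceptual obstacle.
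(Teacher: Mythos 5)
Your proposal is correct and follows essentially the same route as the paper: both directions reduce to the canonical forms of Theorem \ref{fenjie-2} (resp. Lemma \ref{tianheDPSPO-Char-1-Th}), with $EF=FE$ forcing $R=S=O$ and the two annihilation conditions in (\ref{gogo}) forcing $M=O$ and $N=O$, so that Corollary \ref{co-2}(2) applies; your block bookkeeping ($M$ from the first block column, $N$ from the first block row) matches the paper's computation.
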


\begin{proof}
``$\Leftarrow$''
\
Let $\widehat E=E+\varepsilon E_0$,
$\widehat F=F+\varepsilon F_0\in\mathbb{D}_n^{\tiny \mbox{\rm CM}}$,
 $\widehat E\overset{\tiny\mbox{\rm D\!}}\leq\widehat F$
 and
  the forms of $\widehat E$ and $\widehat F$ be as  in  (\ref{shizi}) and (\ref{MPO-20230823-2}), respectively.
Since $EF=FE$,   we have (\ref{fenjieshiAAaa}) and (\ref{fenjieshiABaa}).
Furthermore,
substituting (\ref{fenjieshiAAaa}) and (\ref{fenjieshiABaa})  into
$EE^{\#}\left(F_0-E_0\right)=O$
and
 $\left(F_0-E_0\right)EE^{\#}=O$
gives $N=O$ and $M=O$.
Therefore, $\widehat E\overset{\tiny\mbox{\rm  G \!-}\#}\leq\widehat F$ from Corollary \ref{co-2}(2).

``$\Rightarrow$''
\
Let $\widehat E\overset{\tiny\mbox{\rm  G \!-}\#}\leq\widehat F$
 and
 the forms of $\widehat E$ and $\widehat F$ be as given in  (\ref{DPSPO-Char-1-1}). 
Then
$EF=FE=
P\left(\begin{smallmatrix}
D_1^2&O&O\\
O&O&O\\
O&O&O
\end{smallmatrix}\right)P^{-1}$,
$\left(F_0-E_0\right)EE^{\#}=O$, and
$EE^{\#}\left(F_0-E_0\right)=O$.
Therefore, (\ref{gogo}) holds.
\end{proof}

\section{Dual-minus sharp Partial Order}\label{Section-4-Dual-minus-sharp-PO}
In \cite{tianhe},
 Wang and Jiang introduce two Dual sharp partial orders,
 namely the D-sharp partial order and the G-sharp partial order, respectively.
In this section,
we introduce a new Dual sharp partial order ---   Dual-minus sharp partial order,
which is different from the existing dual sharp partial orders.
 And the new kind of dual sharp partial orders include the D-sharp(G-sharp) partial orders.

 \begin{definition}
 \label{DM-Sharp-def}
Let
$\widehat E=E+\varepsilon E_0$,
$\widehat F=F+\varepsilon F_0\in\mathbb{D}_n^{\tiny \mbox{\rm CM}}$.
 Write $\widehat E\overset{\tiny\mbox{\rm DM\!-}\#}\leq\widehat F$:
\begin{align}
\label{nice-def-sharp}
E \overset\#\leq F \
\mbox{and the DMPGI of} \ \widehat F-\widehat E \ \mbox{exists}.
\end{align}
We call $\widehat E$ is below $\widehat F$  under the Dual-minus sharp order.
\end{definition}

\begin{theorem}
\label{dms-fenjie}
 Let
$\widehat  E=E+\varepsilon E_0$,
$\widehat  F=F+\varepsilon F_0\in\mathbb{D}_n^{\tiny \mbox{\rm CM}}$,
 ${\rk}\left( E \right)=r_e$ and ${\rk}\left( F \right)=r_f$.
Then  $\widehat E\overset{\tiny\mbox{\rm DM\!-}\#}\leq\widehat F$
if and only if there is
 a nonsingular matrix $P$ such that
\begin{align}
\label{DSPO-Char-2}
\left\{\begin{array}{l}\widehat E
=P\begin{pmatrix}
D_1&O&O\\
O&O&O\\
O&O&O\end{pmatrix}P^{-1}
+\varepsilon P
\begin{pmatrix}
E_1&E_2&E_3\\
E_4&O&O\\
E_7&O&O
\end{pmatrix}P^{-1},\\
\widehat F=
P\begin{pmatrix}D_1&O&O\\
O&D_2&O\\
O&O&O
\end{pmatrix}
P^{-1}
+\varepsilon P\begin{pmatrix}
E_1&F_2&E_3\\
F_4&F_5&F_6\\
E_7&F_8&O\end{pmatrix}P^{-1},
\end{array}\right.
\end{align}
where
$D_1\in \mathbb{R}^{r_e\times r_e}$
and
$D_2\in \mathbb{R}^{(r_f-r_e)\times (r_f-r_e)}$ are nonsingular  matrices.
\end{theorem}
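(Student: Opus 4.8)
The plan is to mimic the structure of the proofs of Theorem~\ref{fenjie-the} and Theorem~\ref{fenjie-2}, specializing the ambient decomposition from the minus partial order to the sharp partial order. First I would invoke Definition~\ref{DM-Sharp-def}: the relation $\widehat E\overset{\tiny\mbox{\rm DM\!-}\#}\leq\widehat F$ means $E\overset\#\leq F$ together with the existence of the DMPGI of $\widehat F-\widehat E$. From $E\overset\#\leq F$ and Lemma~\ref{bbbbbb}(5), there is a nonsingular $P$ such that $E$ and $F$ have the block forms appearing in \eqref{3.4}, i.e. $E=P\,{\rm diag}(D_1,O,O)\,P^{-1}$ and $F=P\,{\rm diag}(D_1,D_2,O)\,P^{-1}$ with $D_1$ and $D_2$ invertible. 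Then I would write the dual parts in the conformal block form, $\widehat E = E + \varepsilon P (E_{ij}) P^{-1}$ and $\widehat F = F + \varepsilon P (F_{ij}) P^{-1}$ with no a priori constraints on the $E_{ij}$ or $F_{ij}$, and record that $\widehat E\in\mathbb{D}_n^{\tiny\mbox{\rm CM}}$ forces ${\rm Ind}(E)=1$, which is automatic here.

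The next step is to pin down the zero blocks of the dual part of $\widehat E$. Since $\widehat E\in\mathbb{D}_n^{\tiny\mbox{\rm CM}}$, the DMPGI of $\widehat E$ exists, so by Lemma~\ref{Wang2021MAMT-Th2.1} we have $\rk\left(\begin{smallmatrix}E_0&E\\E&O\end{smallmatrix}\right)=2\rk(E)=2r_e$. Performing block row/column reduction using the invertible $D_1$, this rank condition is equivalent to the vanishing of the Schur-complement blocks of $E_0$ that sit outside the range/corange of $E$; carrying this out shows that, after conjugating into the $P$-coordinates, the dual part of $\widehat E$ must be of the shape $\left(\begin{smallmatrix}E_1&E_2&E_3\\E_4&O&O\\E_7&O&O\end{smallmatrix}\right)$ — i.e. the lower-right $2\times 2$ sub-block (indices $2,3$ by $2,3$) vanishes. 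This is exactly the structure asserted for $\widehat E$ in \eqref{DSPO-Char-2}, and it is identical to the one already derived in Lemma~\ref{TIANHEDSPO-Char-2-Th}(4) and Lemma~\ref{tianheDPSPO-Char-1-Th}(4), so I can cite that computation rather than redo it.

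The heart of the argument is the condition that the DMPGI of $\widehat F-\widehat E$ exists. By construction $F-E = P\,{\rm diag}(O,D_2,O)\,P^{-1}$ and $F_0-E_0 = P(F_{ij}-E_{ij})P^{-1}$ (with the convention $E_{ij}=0$ on the already-vanished blocks). Applying Lemma~\ref{Wang2021MAMT-Th2.1} to $\widehat F-\widehat E$ gives $\rk\left(\begin{smallmatrix}F_0-E_0&F-E\\F-E&O\end{smallmatrix}\right)=2\rk(F-E)=2(r_f-r_e)=2\rk(D_2)$. Now I would run the same block elimination as in the proof of Theorem~\ref{fenjie-the}: using the invertible block $D_2$ to clear rows and columns, the rank of the big matrix reduces to $2\rk(D_2)$ plus the rank of the residual blocks $F_1-E_1$, $F_3-E_3$, $F_7-E_7$ sitting in the positions not touched by $D_2$. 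Since $R=S=O$ here (the sharp decomposition has no $R,S$), there are no extra $RM,NS,RF_5S$ correction terms, so the elimination is cleaner than in Theorem~\ref{fenjie-the}, and the rank equality collapses exactly to $F_1-E_1=O$, $F_3-E_3=O$, $F_7-E_7=O$, i.e. $F_1=E_1$, $F_3=E_3$, $F_7=E_7$. Renaming the remaining free blocks $F_2,F_4,F_5,F_6,F_8$ then yields the stated form \eqref{DSPO-Char-2} of $\widehat F$. For the converse, given $\widehat E,\widehat F$ of the form \eqref{DSPO-Char-2}, one reads off $E\overset\#\leq F$ directly from Lemma~\ref{bbbbbb}(5), and the same rank computation run backwards shows $\rk\left(\begin{smallmatrix}F_0-E_0&F-E\\F-E&O\end{smallmatrix}\right)=2\rk(F-E)$, so Lemma~\ref{Wang2021MAMT-Th2.1} gives the existence of the DMPGI of $\widehat F-\widehat E$; hence $\widehat E\overset{\tiny\mbox{\rm DM\!-}\#}\leq\widehat F$ by Definition~\ref{DM-Sharp-def}.

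The main obstacle I anticipate is purely bookkeeping: the block elimination of the $6\times 6$ (in block terms) matrix $\left(\begin{smallmatrix}F_0-E_0&F-E\\F-E&O\end{smallmatrix}\right)$ must be done carefully to confirm that the only surviving rank contributions beyond $2\rk(D_2)$ come from precisely the three blocks $F_1-E_1$, $F_3-E_3$, $F_7-E_7$ and nothing else — in particular one must check that the blocks $F_5,F_6,F_8$ (and $F_2,F_4$) are genuinely unconstrained. This is the same type of computation carried out in Theorem~\ref{fenjie-the}, specialized to $R=S=O$, so I would present it compactly by pointing to that proof and only indicating the simplification, rather than writing out the full reduction again.
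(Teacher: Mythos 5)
Your proposal is correct and follows essentially the same route as the paper's own proof: Lemma \ref{bbbbbb}(5) for the standard parts, Lemma \ref{Wang2021MAMT-Th2.1} to fix the zero blocks of the dual parts, and the block-rank elimination on $\left(\begin{smallmatrix}F_0-E_0&F-E\\F-E&O\end{smallmatrix}\right)$ to force $F_1=E_1$, $F_3=E_3$, $F_7=E_7$, with the converse obtained by running the computation backwards. The only cosmetic difference is that the paper also imposes the $(3,3)$ zero block of $F_0$ directly from the existence of the DMPGI of $\widehat F$ when writing the intermediate form, whereas you leave it to the elimination; both work.
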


\begin{proof}
Let $\widehat E\overset{\tiny\mbox{\rm DM\!-}\#}\leq\widehat F$.
Then $E \overset\#\leq F$ and the  DMPGIs of $\widehat E$ and $\widehat F$ exist.
It follows from Lemma  \ref{bbbbbb} that
$E$ and $F$ are the forms as in (\ref{3.4}).
Since $\widehat E$ and $\widehat F$ have the DMPGIs,
 we can write
\begin{align}
\label{ye}
\left\{\begin{array}{l}\widehat E
=P\begin{pmatrix}
D_1&O&O\\
O&O&O\\
O&O&O\end{pmatrix}P^{-1}
+\varepsilon P
\begin{pmatrix}
E_1&E_2&E_3\\
E_4&O&O\\
E_7&O&O
\end{pmatrix}P^{-1},\\
\widehat F=
P\begin{pmatrix}D_1&O&O\\
O&D_2&O\\
O&O&O
\end{pmatrix}
P^{-1}
+\varepsilon P\begin{pmatrix}
F_1&F_2&F_3\\
F_4&F_5&F_6\\
F_7&F_8&O\end{pmatrix}P^{-1},
\end{array}\right.
\end{align}
where
$D_1\in \mathbb{R}^{r_e\times r_e}$
and
$D_2\in \mathbb{R}^{(r_f-r_e)\times (r_f-r_e)}$ are nonsingular,
$ E_1, F_1\in \mathbb{R}^{r_e\times r_e}$
and
$F_5\in \mathbb{R}^{(r_f-r_e)\times (r_f-r_e)}$.

Since
 the DMPGI of $\widehat F-\widehat E$ exists,
 ${\rk}\left(\begin{smallmatrix}F_0-E_0&F-E\\F-E&O\end{smallmatrix}\right)=2{\rk}(F-E)$,
that is,
{\small
\begin{align*}
&\rk
\begin{pmatrix}
F_1-E_1&F_2-E_2&F_3-E_3&O&O&O\\
F_4-E_4&F_5&F_6&O&D_2&O\\
F_7-E_7&F_8&O&O&O&O\\
O&O&O&O&O&O\\
O&D_2&O&O&O&O\\
O&O&O&O&O&O
\end{pmatrix}=
\rk
\begin{pmatrix}
F_1-E_1&O&F_3-E_3&O&O&O\\
O&O&O&O&D_2&O\\
F_7-E_7&O&O&O&O&O\\
O&O&O&O&O&O\\
O&D_2&O&O&O&O\\
O&O&O&O&O&O
\end{pmatrix}=2\rk\left(D_2\right).
\end{align*}}
Therefore, $F_1=E_1$, $F_3=E_3$ and $F_7=E_7$.

On the contrary, the conclusion is valid.
\end{proof}

\begin{theorem}
\label{DMS-P}
The Dual-minus sharp order is a partial order.
\end{theorem}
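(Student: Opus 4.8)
The plan is to verify the three defining properties of a partial order—reflexivity, antisymmetry, and transitivity—for the relation $\overset{\tiny\mbox{\rm DM\!-}\#}\leq$ on $\mathbb{D}_n^{\tiny \mbox{\rm CM}}$. Reflexivity is immediate: for any $\widehat E\in\mathbb{D}_n^{\tiny \mbox{\rm CM}}$ we have $E\overset\#\leq E$ (since $E$ is group invertible) and $\widehat F-\widehat E=\widehat 0$ trivially has a DMPGI, so $\widehat E\overset{\tiny\mbox{\rm DM\!-}\#}\leq\widehat E$. Antisymmetry follows quickly by reduction to the real case: if $\widehat E\overset{\tiny\mbox{\rm DM\!-}\#}\leq\widehat F$ and $\widehat F\overset{\tiny\mbox{\rm DM\!-}\#}\leq\widehat E$, then in particular $E\overset\#\leq F$ and $F\overset\#\leq E$, so $E=F$ by antisymmetry of the sharp partial order (Lemma~\ref{bbbbbb}); then $\widehat F-\widehat E=\varepsilon(F_0-E_0)$ must have a DMPGI, and one checks via Lemma~\ref{Wang2021MAMT-Th2.1} that $\rk\left(\begin{smallmatrix}F_0-E_0&0\\0&0\end{smallmatrix}\right)=0$ forces $F_0=E_0$, hence $\widehat E=\widehat F$.

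The substantive part is transitivity: assuming $\widehat E\overset{\tiny\mbox{\rm DM\!-}\#}\leq\widehat F$ and $\widehat F\overset{\tiny\mbox{\rm DM\!-}\#}\leq\widehat G$, I must show $\widehat E\overset{\tiny\mbox{\rm DM\!-}\#}\leq\widehat G$. First, $E\overset\#\leq F\overset\#\leq G$ gives $E\overset\#\leq G$ by transitivity of the sharp partial order. The real work is to show the DMPGI of $\widehat G-\widehat E$ exists, i.e.\ (by Lemma~\ref{Th-Rank=DMPGI} or Lemma~\ref{Wang2021MAMT-Th2.1}) that $\rk(\widehat G-\widehat E)=\rk(G-E)$. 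I would use the canonical form of Theorem~\ref{dms-fenjie}: from $\widehat E\overset{\tiny\mbox{\rm DM\!-}\#}\leq\widehat F$ there is a nonsingular $P$ putting $\widehat E,\widehat F$ in the form~(\ref{DSPO-Char-2}). The delicate point is that $\widehat F\overset{\tiny\mbox{\rm DM\!-}\#}\leq\widehat G$ comes with \emph{its own} similarity transform $Q$; I need to reconcile the two. Since $E\overset\#\leq F$ and $F\overset\#\leq G$ share the decomposition of $F$, I would argue that $Q$ can be chosen compatibly with $P$ on the block structure—equivalently, refine the common $P$ so that $E$, $F$, $G$ are simultaneously displayed as nested block-diagonal matrices (sharp order is equivalent to commuting minus order, so $E$, $F$, $G$ pairwise commute and are simultaneously reducible). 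Once $\widehat E$, $\widehat F$, $\widehat G$ are all written over a single $P$ with the standard parts nested diagonally, the rank identity $\rk\left(\begin{smallmatrix}G_0-E_0&G-E\\G-E&0\end{smallmatrix}\right)=2\rk(G-E)$ reduces, after the same block row/column reductions performed in the proof of Theorem~\ref{dms-fenjie}, to checking that the ``corner'' blocks of $\widehat G-\widehat E$ that must vanish do vanish—and these are exactly the sums of the corresponding corner blocks of $\widehat G-\widehat F$ and $\widehat F-\widehat E$, each of which vanishes by hypothesis. This yields the needed rank equality, hence the existence of the DMPGI of $\widehat G-\widehat E$, and therefore $\widehat E\overset{\tiny\mbox{\rm DM\!-}\#}\leq\widehat G$.

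The main obstacle I anticipate is the bookkeeping in the transitivity step: ensuring that a \emph{single} nonsingular $P$ can simultaneously bring $\widehat E$, $\widehat F$, $\widehat G$ into compatible block forms, rather than having two a priori unrelated transformations. The cleanest way around this is probably to avoid canonical forms entirely and work purely with the rank characterization. Specifically, I would try to prove a ``rank-additivity transfers'' lemma of the shape: if $E\overset\#\leq F\overset\#\leq G$ and both $\widehat F-\widehat E$ and $\widehat G-\widehat F$ have DMPGIs, then $\widehat G-\widehat E$ has a DMPGI—using $(\widehat G-\widehat E)=(\widehat G-\widehat F)+(\widehat F-\widehat E)$ together with subadditivity and superadditivity of the relevant block-rank, plus the sharp-order rank identities $\rk(G-E)=\rk(G-F)+\rk(F-E)$. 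If that rank argument closes, transitivity follows without ever invoking Theorem~\ref{dms-fenjie}, making the proof short and robust.
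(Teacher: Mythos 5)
Your proposal is correct and its primary route is essentially the paper's: reflexivity is dismissed the same way, antisymmetry is reduced to $E=F$ via the real sharp order and then $E_0=F_0$ is forced from $\rk\left(\begin{smallmatrix}F_0-E_0&O\\O&O\end{smallmatrix}\right)=2\,\rk(F-E)=0$, and for transitivity the paper does exactly what you describe first — it writes $\widehat E,\widehat F,\widehat G$ over a \emph{single} nonsingular $P$ with nested block-diagonal standard parts (citing Lemma \ref{bbbbbb} and Theorem \ref{dms-fenjie}) and then declares the conclusion easy to check. You are right that the reconciliation of the two similarity transforms into one $P$ is the delicate point; the paper asserts it without justification, so on that score your sketch is no less complete than the published proof. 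Your fallback route is actually the cleaner one and closes the gap using only material already in the paper: since $E\overset\#\leq F$ implies $E\leq F$ (Lemma \ref{bbbbbb}(2)), Definitions \ref{Dual-minusPartialOrder-Def} and \ref{DM-Sharp-def} give that $\widehat E\overset{\tiny\mbox{\rm DM\!-}\#}\leq\widehat F$ implies $\widehat E\overset{\tiny\mbox{\rm D\!}}\leq\widehat F$, and likewise for the pair $\widehat F,\widehat G$; transitivity of the Dual-minus order (Theorem \ref{nice-3}, inherited from the minus order on the block matrices via Theorem \ref{nice-2}) then yields the existence of the DMPGI of $\widehat G-\widehat E$ with no canonical forms at all, while transitivity of the real sharp order yields $E\overset\#\leq G$. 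That two-line argument is precisely what your ``rank-additivity transfers'' lemma amounts to, and it makes the transitivity step rigorous where the paper's version leaves the single-$P$ claim unproved.
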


\begin{proof}
 Let $\widehat  E=E+\varepsilon E_0$,
$\widehat  F=F+\varepsilon F_0\in\mathbb{D}_n^{\tiny \mbox{\rm CM}}$,
${\rm \rk}\left( E \right)=r_e$, ${\rm \rk}\left( F \right)=r_f$,
and
 $\widehat E\overset{\tiny\mbox{\rm DM\!-}\#}\leq\widehat F$.

1. Reflexivity: It is evident.

2. Antisymmetry:
Suppose that  $\widehat F\overset{\tiny\mbox{\rm DM\!-}\#}\leq\widehat E$.
Since $\widehat E\overset{\tiny\mbox{\rm DM\!-}\#}\leq\widehat F$
  and $\widehat F\overset{\tiny\mbox{\rm DM\!-}\#}\leq\widehat E$,
we have that the DMPGI of $\widehat F-\widehat E$ exists, $E \overset\#\leq F$
 and $F \overset\#\leq E$.
Since
$E \overset\#\leq F$
 and $F \overset\#\leq E$, we have $E=F$.
It follows that
\begin{align*}
{\rk}\begin{pmatrix}F_0-E_0&F-E\\F-E&O\end{pmatrix}
=
{\rk}\begin{pmatrix}F_0-E_0&O\\O&O\end{pmatrix}
=
2{\rk}(F-E)=0.
\end{align*}
Therefore,  $E_0=F_0$. So $\widehat E =\widehat F$.

3. Transitivity: \
Suppose that
$\widehat  G=G+\varepsilon G_0\in\mathbb{D}_n^{\tiny \mbox{\rm CM}}$,
${\rm \rk}\left( G \right)=r_g$,
the $\widehat G$ has the DMPGI and
 $\widehat F\overset{\tiny\mbox{\rm DM\!-}\#}\leq\widehat G$.
 Let $\widehat E\overset{\tiny\mbox{\rm DM\!-}\#}\leq\widehat F$
  and $\widehat F\overset{\tiny\mbox{\rm DM\!-}\#}\leq\widehat G$.
Then  $E \overset\#\leq F$  and $F \overset\#\leq G$.
By applying Lemma \ref{bbbbbb} and Theorem \ref{dms-fenjie},
we get
\begin{align*}
 \widehat E
&=
P\begin{pmatrix}
D_1&O&O&O\\
O&O&O&O\\
O&O&O&O\\
O&O&O&O
\end{pmatrix}P^{-1}
+\varepsilon P
\begin{pmatrix}
E_1&E_2&E_{3}&E_{4}\\
E_5&O&O&O\\
E_{9}&O&O&O\\
E_{13}&O&O&O
\end{pmatrix}P^{-1},\\
\widehat F
&=
P\begin{pmatrix}
D_1&O&O&O\\
O&D_2&O&O\\
O&O&O&O\\
O&O&O&O
\end{pmatrix}
P^{-1}
+\varepsilon P\begin{pmatrix}
E_1&F_2&E_{3}&E_{4}\\
F_5&F_6&F_{7}&F_{8}\\
E_{9}&F_{10}&O&O\\
E_{13}&F_{14}&O&O
\end{pmatrix}P^{-1},
\\
\widehat G
&=
P\begin{pmatrix}
D_1&O&O&O\\
O&D_2&O&O\\
O&O&D_3&O\\
O&O&O&O
\end{pmatrix}
P^{-1}
+\varepsilon P\begin{pmatrix}
E_1&F_2&G_{3}&E_{4}\\
F_5&F_6&G_{7}&F_{8}\\
G_{9}&G_{10}&G_{11}&G_{12}\\
E_{13}&F_{14}&G_{15}&O
\end{pmatrix}P^{-1},
\end{align*}
where
$D_1\in \mathbb{R}^{r_e\times r_e}$,
$D_2\in \mathbb{R}^{(r_f-r_e)\times (r_f-r_e)}$
and
$D_3\in \mathbb{R}^{(r_g-r_f)\times (r_g-r_f)}$ are nonsingular  matrices,
and
$E_1\in \mathbb{R}^{r_e\times r_e}$,
$F_6\in \mathbb{R}^{(r_f-r_e)\times (r_f-r_e)}$
and
$G_{11}\in \mathbb{R}^{(r_g-r_f)\times (r_g-r_f)}$.
It is easy to check that  $E \overset\#\leq G$ and
the DMPGI of $\widehat G-\widehat E$ exists, that is,
$\widehat E\overset{\tiny\mbox{\rm DM\!-}\#}\leq\widehat G$.
\end{proof}

Next, we consider the relationship between the Dual-minus sharp  partial order and the Dual-minus partial order.

\begin{theorem}
\label{20231209-Th4-3}
Let
$\widehat  E $,
$\widehat  F \in\mathbb{D}_n^{\tiny \mbox{\rm CM}}$.
If $\widehat E\overset{\tiny\mbox{\rm DM\!-}\#}\leq\widehat F$, then
$\widehat E\overset{\tiny\mbox{\rm D\!}}\leq\widehat F$.
\end{theorem}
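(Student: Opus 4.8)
The goal is to show that the Dual-minus sharp order refines the Dual-minus order. The plan is to reduce the claim to Definition~\ref{Dual-minusPartialOrder-Def}: it suffices to verify that $\widehat E\overset{\tiny\mbox{\rm DM\!-}\#}\leq\widehat F$ implies (i) $E\leq F$ (the minus order on the standard parts) and (ii) the DMPGI of $\widehat F-\widehat E$ exists. Part (ii) is immediate, since it is literally part of the hypothesis in Definition~\ref{DM-Sharp-def}. So the whole argument comes down to extracting $E\leq F$ from $E\overset\#\leq F$.

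For part (i), I would invoke Lemma~\ref{bbbbbb}, which lists equivalent characterizations of the sharp order; in particular, condition~(2) of that lemma states that $E\overset\#\leq F$ is equivalent to ``$E\leq F$ and $EF=FE$''. Hence $E\overset\#\leq F$ trivially yields $E\leq F$. Combining this with the observation that the DMPGI of $\widehat F-\widehat E$ exists by hypothesis, Definition~\ref{Dual-minusPartialOrder-Def} gives $\widehat E\overset{\tiny\mbox{\rm D\!}}\leq\widehat F$, which is exactly the conclusion.

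Alternatively — and perhaps more in the spirit of the paper's canonical-form machinery — one could start from the decomposition~\eqref{DSPO-Char-2} provided by Theorem~\ref{dms-fenjie}. Setting $R=O$, $S=O$, $M=O$, $N=O$ and choosing an orthogonal $P$ is not possible in general, but one can still observe directly that the standard parts of $\widehat E$ and $\widehat F$ in~\eqref{DSPO-Char-2} are exactly of the form~\eqref{3.4}, which by Lemma~\ref{bbbbbb} (or Lemma~\ref{xiaotuilun}) forces $E\leq F$; the dual parts in~\eqref{DSPO-Char-2} are precisely of the shape~\eqref{shizi}–\eqref{MPO-20230823-2} appearing in Theorem~\ref{fenjie-2} with $R=S=O$, so Theorem~\ref{fenjie-2} directly delivers $\widehat E\overset{\tiny\mbox{\rm D\!}}\leq\widehat F$. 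I would likely present the short argument via Lemma~\ref{bbbbbb} and Definition~\ref{Dual-minusPartialOrder-Def}, and perhaps remark on the canonical-form viewpoint.

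There is essentially no hard part here: the statement is a one-line consequence of unwinding the two definitions plus the elementary fact (Lemma~\ref{bbbbbb}(2)) that the sharp order implies the minus order on real matrices. The only thing to be careful about is making sure both defining conditions of $\overset{\tiny\mbox{\rm D\!}}\leq$ are checked and that the ambient hypotheses ($\widehat E,\widehat F\in\mathbb{D}_n^{\tiny \mbox{\rm CM}}$, existence of DMPGIs) are in force, which they are since membership in $\mathbb{D}_n^{\tiny \mbox{\rm CM}}$ guarantees the DMPGIs of $\widehat E$ and $\widehat F$ exist.
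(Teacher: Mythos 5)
Your primary argument is correct, and it is a genuinely more direct route than the one the paper takes. You observe that the existence of the DMPGI of $\widehat F-\widehat E$ is literally one of the two clauses of Definition~\ref{DM-Sharp-def}, so the only content of the theorem is the implication $E\overset\#\leq F\Rightarrow E\leq F$, which is immediate from Lemma~\ref{bbbbbb}(2); since membership in $\mathbb{D}_n^{\tiny \mbox{\rm CM}}$ guarantees the DMPGIs of $\widehat E$ and $\widehat F$, all ambient hypotheses of Definition~\ref{Dual-minusPartialOrder-Def} are met. The paper instead follows what you describe as your ``alternative'' route: it invokes the canonical form \eqref{DSPO-Char-2} from Theorem~\ref{dms-fenjie}, reads off $E\leq F$ from the standard parts, and then writes out $\widehat F-\widehat E$ explicitly to re-verify that its DMPGI exists --- a verification that, as you note, is redundant given the definition. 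Your definition-unwinding argument buys brevity and makes transparent exactly where the content lies; the paper's canonical-form computation buys nothing extra here beyond consistency of style with the surrounding theorems. One minor blemish in your alternative paragraph: the aside that ``choosing an orthogonal $P$ is not possible in general'' is a non sequitur (Theorem~\ref{fenjie-2} only requires a nonsingular $P$, and \eqref{DSPO-Char-2} is already of the form \eqref{MPO-20230823-2} with $R=S=O$, $N=F_2-E_2$, $M=F_4-E_4$), but this does not affect the validity of either of your arguments.
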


\begin{proof}
Let
$\widehat  E=E+\varepsilon E_0$,
$\widehat  F=F+\varepsilon F_0\in\mathbb{D}_n^{\tiny \mbox{\rm CM}}$,
  ${\rk}\left( E \right)=r_e$ and ${\rk}\left( F \right)=r_f$.
 Since  $\widehat E\overset{\tiny\mbox{\rm DM\!-}\#}\leq\widehat F$,  
   $\widehat E$ and  $\widehat F$ have the forms  (\ref{DSPO-Char-2}).
And then, $E\leq F$ and
\begin{align}
\nonumber
\widehat F-\widehat E
 =\left(F-E\right)+\varepsilon \left(F_0-E_0\right)=
P\begin{pmatrix}
O&O&O\\
O&D_2&O\\
O&O&O
\end{pmatrix}
P^{-1}
+\varepsilon P\begin{pmatrix}
O&F_2-E_2&O\\
F_4-E_4&F_5&F_6\\
O&F_8&O\end{pmatrix}P^{-1}.
\end{align}
Therefore,
we get that $E\leq F$ and the DMPGI of $\widehat F-\widehat E$ exists,
that is, $\widehat E\overset{\tiny\mbox{\rm D\!}}\leq\widehat F$.
\end{proof}

The converse of Theorem \ref{20231209-Th4-3} is not true.

\begin{example}
\label{first}
Let
$$
\widehat E=E+\varepsilon E_0
=
 \begin{pmatrix}
1&0&0\\0&0&0\\0&0&0\end{pmatrix}
+\varepsilon
\begin{pmatrix}
1&1&1\\1&0&0\\1&0&0
\end{pmatrix}, \
\widehat F=F+\varepsilon F_0
=
 \begin{pmatrix}
2&1&0\\1&1&0\\0&0&0
\end{pmatrix}
+\varepsilon  \begin{pmatrix}
4&3&2\\3&1&1\\2&1&0
\end{pmatrix}.
$$
Then the  DMPGIs of $\widehat E$ and $\widehat F$ exist
and
$$\widehat F-\widehat E=
\begin{pmatrix}
1&1&0\\1&1&0\\0&0&0
\end{pmatrix}
+\varepsilon  \begin{pmatrix}
3&2&1\\2&1&1\\1&1&0
\end{pmatrix},  \
{\rk}\left(\begin{matrix}
F_0-E_0  &F-E\\ F-E  &O
\end{matrix}\right)
 =2{\rk}\left( F-E \right)=2.
$$
And since
  $\rk\left(F-E\right)=1=\rk(F)-\rk(E)$,
  we get that  $\widehat E \overset{\tiny\mbox{\rm D\!}}\leq \widehat F$.

Because
$$E^\#E=\begin{pmatrix}
1&0&0\\0&0&0\\0&0&0
\end{pmatrix}, \
E^\#F=\begin{pmatrix}
2&1&0\\0&0&0\\0&0&0
\end{pmatrix},$$
we have $E^\#E\neq E^\#F$.
 That means $E \overset\#\leq F$ does not hold.
It is evident that $\widehat E$ cannot be below  $\widehat F$ under the Dual-minus sharp partial order.
\end{example}

In the following Theorem \ref{nice---6},
based on the Dual-minus partial order, we give a  characterization of the Dual-minus sharp partial order.
\begin{theorem}
\label{nice---6}
Let $\widehat E=E+\varepsilon E_0$,
$\widehat F=F+\varepsilon F_0\in\mathbb{D}_n^{\tiny \mbox{\rm CM}}$.
Then $\widehat E\overset{\tiny\mbox{\rm DM\!-}\#}\leq\widehat F$ if and only if
$\widehat E\overset{\tiny\mbox{\rm D\!}}\leq\widehat F$ and
 \begin{align}
\label{20231010-1}
EF=FE.
\end{align}
\end{theorem}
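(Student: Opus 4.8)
The plan is to prove the equivalence by extracting the standard-part condition and the dual-part condition separately, and then showing each direction. For the forward direction, assume $\widehat E\overset{\tiny\mbox{\rm DM\!-}\#}\leq\widehat F$. By Definition \ref{DM-Sharp-def} this means $E\overset\#\leq F$ and the DMPGI of $\widehat F-\widehat E$ exists; the first of these immediately gives $\widehat E\overset{\tiny\mbox{\rm D\!}}\leq\widehat F$ (since $E\overset\#\leq F$ implies $E\leq F$ by Lemma \ref{bbbbbb}(2), and the DMPGI-of-difference condition is exactly what Definition \ref{Dual-minusPartialOrder-Def} requires), while Lemma \ref{bbbbbb}(2) also yields $EF=FE$, which is (\ref{20231010-1}).

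For the converse, assume $\widehat E\overset{\tiny\mbox{\rm D\!}}\leq\widehat F$ together with $EF=FE$. From $\widehat E\overset{\tiny\mbox{\rm D\!}}\leq\widehat F$ we have $E\leq F$, and combining $E\leq F$ with $EF=FE$ gives $E\overset\#\leq F$ by Lemma \ref{bbbbbb}(2) again. Since $\widehat E\overset{\tiny\mbox{\rm D\!}}\leq\widehat F$ also guarantees (by Definition \ref{Dual-minusPartialOrder-Def}) that the DMPGI of $\widehat F-\widehat E$ exists, both clauses of (\ref{nice-def-sharp}) are met, so $\widehat E\overset{\tiny\mbox{\rm DM\!-}\#}\leq\widehat F$. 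As a safety check one can instead route through the canonical forms: $\widehat E\overset{\tiny\mbox{\rm D\!}}\leq\widehat F$ gives the shape (\ref{shizi})--(\ref{MPO-20230823-2}) by Theorem \ref{fenjie-2}, and $EF=FE$ forces $R=O$ and $S=O$ in the standard parts, which reduces those forms exactly to (\ref{DSPO-Char-2}) of Theorem \ref{dms-fenjie}, hence $\widehat E\overset{\tiny\mbox{\rm DM\!-}\#}\leq\widehat F$.

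There is essentially no hard step here: everything reduces to the already-established equivalences, chiefly the characterization $E\overset\#\leq F \iff (E\leq F \text{ and } EF=FE)$ from Lemma \ref{bbbbbb} and the definitions of the two partial orders in question. The only point deserving a line of care is confirming that the "DMPGI of $\widehat F-\widehat E$ exists" clause is literally common to both orders — it appears verbatim in (\ref{nice-def}) and in (\ref{nice-def-sharp}) — so that the equivalence of the orders collapses to the equivalence of their standard-part requirements, $E\leq F$ versus $E\overset\#\leq F$, whose gap is precisely commutativity. I would present the argument in the two-direction style matching the surrounding theorems, and optionally append the canonical-form verification via Theorems \ref{fenjie-2} and \ref{dms-fenjie} as a remark to keep consistency with the section's proof conventions.
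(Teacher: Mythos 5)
Your proposal is correct and follows essentially the same route as the paper: both directions reduce to Definition \ref{DM-Sharp-def}, Definition \ref{Dual-minusPartialOrder-Def}, and the classical equivalence $E\overset\#\leq F \iff (E\leq F$ and $EF=FE)$ from Lemma \ref{bbbbbb}(2). The only cosmetic difference is that in the forward direction the paper extracts $EF=FE$ from the canonical form of Theorem \ref{dms-fenjie}, whereas you obtain it directly from Lemma \ref{bbbbbb}(2); both are valid and the substance is identical.
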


\begin{proof}
``$\Leftarrow$''
\
Let $\widehat E=E+\varepsilon E_0$,
$\widehat F=F+\varepsilon F_0\in\mathbb{D}_n^{\tiny \mbox{\rm CM}}$,  and
$\widehat E\overset{\tiny\mbox{\rm D\!}}\leq\widehat F$.
Then $E\leq F$ and
 the DMPGI of $\widehat F-\widehat E$ exists.
And since $EF=FE$, by applying Lemma \ref{bbbbbb}(2),
 we get $E\overset\#\leq F$.
Therefore,
$\widehat E\overset{\tiny\mbox{\rm DM\!-}\#}\leq\widehat F$.

``$\Rightarrow$'' \
Let $\widehat E\overset{\tiny\mbox{\rm DM\!-}\#}\leq\widehat F$.
We get $E \leq F$ and the DMPGI  of $\widehat F-\widehat E$  exists
by applying Definition \ref{DM-Sharp-def}.
Therefore, $\widehat E\overset{\tiny\mbox{\rm D\!}}\leq\widehat F$.
Since   $\widehat E\overset{\tiny\mbox{\rm DM\!-}\#}\leq\widehat F$,
 we get $EF=FE$
 from Theorem \ref{dms-fenjie}.
\end{proof}

Next, we consider the relationship between the Dual-minus sharp  partial order and the D-sharp partial  order.

\begin{theorem}
\label{D-DM-Sharp-Th}
 Let
$\widehat  E=E+\varepsilon E_0$,
$\widehat  F=F+\varepsilon F_0\in\mathbb{D}_n^{\tiny \mbox{\rm CM}}$.
If $\widehat E\overset{\tiny\mbox{\rm D\!-}\#}\leq\widehat F$, then
$\widehat E\overset{\tiny\mbox{\rm DM\!-}\#}\leq\widehat F$.
\end{theorem}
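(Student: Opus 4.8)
The plan is to reduce everything to the canonical form of Lemma~\ref{TIANHEDSPO-Char-2-Th}(4) and recognize it as a special case of the canonical form of Theorem~\ref{dms-fenjie}. First I would assume $\widehat E\overset{\tiny\mbox{\rm D\!-}\#}\leq\widehat F$, so by Lemma~\ref{TIANHEDSPO-Char-2-Th} there is a nonsingular $P$ for which $\widehat E$ and $\widehat F$ take the form~(\ref{pei-DSPO-Char-2}); in particular the standard parts satisfy $E\overset\#\leq F$ (this is part of Lemma~\ref{TIANHEDSPO-Char-2-Th}, or follows directly from reading off~(\ref{pei-DSPO-Char-2}) and applying Lemma~\ref{bbbbbb}(5)), and the DMPGIs of $\widehat E,\widehat F$ exist because both lie in $\mathbb{D}_n^{\tiny\mbox{\rm CM}}$.

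The core step is to verify that the DMPGI of $\widehat F-\widehat E$ exists. Subtracting the two expressions in~(\ref{pei-DSPO-Char-2}), the standard part of $\widehat F-\widehat E$ is $P\,\mathrm{diag}(O,D_2,O)\,P^{-1}$ with $D_2$ invertible, and the dual part has zero blocks in the $(1,1)$, $(1,3)$, $(3,1)$ and $(3,3)$ positions — exactly the pattern $\bigl(\begin{smallmatrix}O&*&O\\ *&*&*\\ O&*&O\end{smallmatrix}\bigr)$ appearing for $\widehat F-\widehat E$ in the proof of Theorem~\ref{dms-fenjie}. I would then compute $\rk\bigl(\begin{smallmatrix}F_0-E_0&F-E\\ F-E&O\end{smallmatrix}\bigr)$ by the same block row/column reduction used there (the $D_2$ blocks clear the middle dual block and the $(2,1),(1,2)$ positions), obtaining $2\rk(D_2)=2\rk(F-E)$; by Lemma~\ref{Wang2021MAMT-Th2.1} this is precisely the condition for the DMPGI of $\widehat F-\widehat E$ to exist. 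Combined with $E\overset\#\leq F$, Definition~\ref{DM-Sharp-def} then gives $\widehat E\overset{\tiny\mbox{\rm DM\!-}\#}\leq\widehat F$.

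An alternative, slightly slicker route avoids coordinates: by Corollary~\ref{co-2}(1), $\widehat E\overset{\tiny\mbox{\rm D\!-}\#}\leq\widehat F$ already implies $\widehat E\overset{\tiny\mbox{\rm D\!}}\leq\widehat F$ (it is the case $R=S=O$, $M=-D_2E_4D_1^{-1}$, $N=-D_1^{-1}E_2D_2$ of Theorem~\ref{fenjie-2}), and the same $R=S=O$ specialization shows $EF=FE$. Then Theorem~\ref{nice---6} ($\widehat E\overset{\tiny\mbox{\rm D\!}}\leq\widehat F$ together with $EF=FE$ is equivalent to $\widehat E\overset{\tiny\mbox{\rm DM\!-}\#}\leq\widehat F$) closes the argument in one line. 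I would likely present this second version as the main proof and perhaps remark on the first.

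I do not expect a genuine obstacle here: the statement is essentially the observation that the D-sharp canonical form is obtained from the Dual-minus sharp canonical form by further specializing the dual part, so the only thing to be careful about is bookkeeping — making sure the block partition $r_e, r_f-r_e, n-r_f$ is consistent across Lemmas~\ref{bbbbbb}, \ref{TIANHEDSPO-Char-2-Th} and Theorems~\ref{fenjie-2}, \ref{dms-fenjie}, and that the rank computation for $\widehat F-\widehat E$ is done cleanly. The converse is false (the D-sharp order constrains the off-diagonal dual blocks $F_2,F_4$ whereas the Dual-minus sharp order leaves them free), so I would not attempt it, though a one-line counterexample in the spirit of Example~\ref{first} could be added if desired.
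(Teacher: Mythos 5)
Your first route is exactly the paper's proof: invoke Lemma~\ref{TIANHEDSPO-Char-2-Th} to put $\widehat E,\widehat F$ in the form (\ref{pei-DSPO-Char-2}), subtract to see that $\widehat F-\widehat E$ has standard part $P\,\mathrm{diag}(O,D_2,O)\,P^{-1}$ and a dual part vanishing in the corner blocks, conclude that the DMPGI of $\widehat F-\widehat E$ exists, and finish with Definition~\ref{DM-Sharp-def}; the paper even writes out the same difference with the explicit entries $-D_1^{-1}E_2D_2$ and $-D_2E_4D_1^{-1}$ in the off-diagonal dual blocks. The second route you prefer to lead with is a genuinely different and legitimate shortcut: since Theorem~\ref{The-nice3} (forward direction) already yields $\widehat E\overset{\tiny\mbox{\rm D\!}}\leq\widehat F$ and $EF=FE$ from $\widehat E\overset{\tiny\mbox{\rm D\!-}\#}\leq\widehat F$, Theorem~\ref{nice---6} immediately gives $\widehat E\overset{\tiny\mbox{\rm DM\!-}\#}\leq\widehat F$, and both cited results precede this theorem, so there is no circularity. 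The only care needed is that Corollary~\ref{co-2}(1) as stated runs from the specialized Dual-minus form to the D-sharp form, so to use it in reverse you must first put the D-sharp pair in form (\ref{pei-DSPO-Char-2}) and then recognize it as an instance of (\ref{MPO-20230823-2}) — or simply quote Theorem~\ref{The-nice3} instead, which packages exactly this. What the paper's coordinate computation buys is self-containedness and the explicit expression for $\widehat F-\widehat E$ (reused elsewhere); what your shortcut buys is brevity by exploiting the equivalences already established in Sections~\ref{Section-3-Dual-minus-PO} and~\ref{Section-4-Dual-minus-sharp-PO}. Your closing remark that the converse fails is consistent with the paper, which gives an explicit counterexample right after this theorem.
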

\begin{proof}
 Let $\widehat  E$, $\widehat  F\in\mathbb{D}_n^{\tiny \mbox{\rm CM}}$,
${\rk}\left( E \right)=r_e$, ${\rk}\left( F \right)=r_f$ and
 $\widehat E\overset{\tiny\mbox{\rm D\!-}\#}\leq\widehat F$.
  Then we obtain  that
  $E\overset\#\leq F$
  and
 the forms of $\widehat E$ and  $\widehat F$ are as in (\ref{pei-DSPO-Char-2}) by Lemma \ref{TIANHEDSPO-Char-2-Th}.
It follows from
\begin{align*}
\widehat F-\widehat E
 =\left(F-E\right)+\varepsilon \left(F_0-E_0\right)=
P\begin{pmatrix}
O&O&O\\
O&D_2&O\\
O&O&O
\end{pmatrix}
P^{-1}
+\varepsilon P\begin{pmatrix}
O&-D_1^{-1}E_2D_2&O\\
-D_2E_4D_1^{-1}&F_5&F_6\\
O&F_8&O\end{pmatrix}P^{-1}
\end{align*}
that the DMPGI of $\widehat F-\widehat E$ exists.
 Therefore,  according to Definition
\ref{DM-Sharp-def},
we have $\widehat E\overset{\tiny\mbox{\rm DM\!-}\#}\leq\widehat F$.
\end{proof}

The converse of Theorem \ref{D-DM-Sharp-Th} is not true.

\begin{example}
Let
\begin{align}
 \nonumber
\widehat E=E+\varepsilon E_0
=
 \begin{pmatrix}
1&0&0\\0&0&0\\0&0&0\end{pmatrix}
+\varepsilon
\begin{pmatrix}
1&4&7\\2&0&0\\3&0&0
\end{pmatrix}, \
\widehat F=F+\varepsilon F_0
=
 \begin{pmatrix}
1&0&0\\0&1&0\\0&0&0
\end{pmatrix}
+\varepsilon  \begin{pmatrix}
1&4&7\\2&-1&-2\\3&-3&0
\end{pmatrix}
\end{align}
then
$$\widehat F-\widehat E=
\begin{pmatrix}
0&0&0\\0&1&0\\0&0&0
\end{pmatrix}
+\varepsilon  \begin{pmatrix}
0&0&0\\0&-1&-2\\0&-3&0
\end{pmatrix}, \
{\rk}\left(\begin{matrix}
F_0-E_0  &F-E\\ F-E  &O
\end{matrix}\right)
=2{\rk}\left( F-E \right)=2.$$
 Therefore, the  DMPGI  of $\widehat F-\widehat E$  exists.
Since $E^2=E F=  FE $,  we have $E\overset\#\leq F$.
It follows that
 $\widehat E\overset{\tiny\mbox{\rm DM\!-}\#}\leq\widehat F$.

 And since
$$EF_0+E_0F
=\begin{pmatrix}
2&8&7\\2&0&0\\3&0&0
\end{pmatrix}, \
E_0E+EE_0=
\begin{pmatrix}
2&4&7\\2&0&0\\3&0&0
\end{pmatrix},$$
then
$EF_0+E_0F \neq E_0E+EE_0$.
It is evident that $\widehat E$ cannot be below  $\widehat F$ 
under the  D-sharp partial order by applying Lemma \ref{TIANHEDSPO-Char-2-Th}(2).
\end{example}

In the following Theorem \ref{nice---4},
based on the Dual-minus sharp partial order, we provide a  characterization of the  D-sharp   partial order.

\begin{theorem}
\label{nice---4}
Let $\widehat E=E+\varepsilon E_0$,
$\widehat F=F+\varepsilon F_0\in\mathbb{D}_n^{\tiny \mbox{\rm CM}}$.
 Then
 $\widehat E\overset{\tiny\mbox{\rm  D \!-}\#}\leq\widehat F$
  if and only if
 $\widehat E\overset{\tiny\mbox{\rm DM\!-}\#}\leq\widehat F$ and
\begin{align}
\label{perfect-1}
\left(F_0-E_0\right)EE^{\#}=\left(E-F\right)E_0E^{\#},\
EE^{\#}\left(F_0-E_0\right)=E^{\#} E_0\left(E-F\right).
\end{align}
\end{theorem}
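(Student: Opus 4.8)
The plan is to prove Theorem~\ref{nice---4} by combining the structural characterizations already established, since the equivalence links the D-sharp partial order to the Dual-minus sharp partial order together with two explicit dual-part equations. Note first that Theorem~\ref{The-nice3} already characterizes $\widehat E\overset{\tiny\mbox{\rm D\!-}\#}\leq\widehat F$ as $\widehat E\overset{\tiny\mbox{\rm D\!}}\leq\widehat F$ together with $EF=FE$ and the two equations in (\ref{go2}), which are exactly (\ref{perfect-1}). By Theorem~\ref{nice---6}, the conjunction ``$\widehat E\overset{\tiny\mbox{\rm D\!}}\leq\widehat F$ and $EF=FE$'' is equivalent to $\widehat E\overset{\tiny\mbox{\rm DM\!-}\#}\leq\widehat F$. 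So the whole statement should follow by substituting this equivalence into Theorem~\ref{The-nice3}.

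First I would handle the direction ``$\Rightarrow$'': assume $\widehat E\overset{\tiny\mbox{\rm D\!-}\#}\leq\widehat F$. By Theorem~\ref{The-nice3} we get $\widehat E\overset{\tiny\mbox{\rm D\!}}\leq\widehat F$, $EF=FE$, and (\ref{perfect-1}). From $\widehat E\overset{\tiny\mbox{\rm D\!}}\leq\widehat F$ and $EF=FE$, Theorem~\ref{nice---6} yields $\widehat E\overset{\tiny\mbox{\rm DM\!-}\#}\leq\widehat F$, and (\ref{perfect-1}) is carried along directly. For the direction ``$\Leftarrow$'': assume $\widehat E\overset{\tiny\mbox{\rm DM\!-}\#}\leq\widehat F$ and (\ref{perfect-1}). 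By Theorem~\ref{nice---6}, $\widehat E\overset{\tiny\mbox{\rm DM\!-}\#}\leq\widehat F$ gives both $\widehat E\overset{\tiny\mbox{\rm D\!}}\leq\widehat F$ and $EF=FE$. Now all three hypotheses of the ``$\Leftarrow$'' part of Theorem~\ref{The-nice3} are in hand (its condition (\ref{go2}) is literally (\ref{perfect-1}) plus $EF=FE$), so we conclude $\widehat E\overset{\tiny\mbox{\rm D\!-}\#}\leq\widehat F$.

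Alternatively, if a self-contained argument is preferred over chaining the two earlier theorems, one can work directly with the canonical forms: assuming $\widehat E\overset{\tiny\mbox{\rm DM\!-}\#}\leq\widehat F$, Theorem~\ref{dms-fenjie} puts $\widehat E,\widehat F$ in the form (\ref{DSPO-Char-2}) (note this already forces $EF=FE$ via the shared $D_1$ block and zero off-blocks in the standard part), then substitute into (\ref{perfect-1}) exactly as in the proof of Theorem~\ref{The-nice3} to pin down $F_2=E_2-D_1^{-1}E_2D_2$ and $F_4=E_4-D_2E_4D_1^{-1}$; comparing with (\ref{pei-DSPO-Char-2}) and invoking Lemma~\ref{TIANHEDSPO-Char-2-Th} gives $\widehat E\overset{\tiny\mbox{\rm D\!-}\#}\leq\widehat F$. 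The converse direction substitutes the form (\ref{pei-DSPO-Char-2}) back to verify (\ref{perfect-1}) and the existence of the DMPGI of $\widehat F-\widehat E$.

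I do not anticipate a genuine obstacle here: the theorem is essentially a repackaging of Theorem~\ref{The-nice3} through the lens of Theorem~\ref{nice---6}, so the ``hard part'' is only bookkeeping---making sure the off-diagonal dual blocks are matched correctly and that the condition $EF=FE$ is correctly absorbed into $\widehat E\overset{\tiny\mbox{\rm DM\!-}\#}\leq\widehat F$ rather than double-counted. The cleanest write-up is the three-line deduction from Theorems~\ref{The-nice3} and~\ref{nice---6}; I would present that and remark that the block-form computation is identical to the one in the proof of Theorem~\ref{The-nice3}.
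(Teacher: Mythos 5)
Your primary argument is correct and is a genuinely different route from the paper's. You observe that Theorem \ref{The-nice3} already gives
$\widehat E\overset{\tiny\mbox{\rm D\!-}\#}\leq\widehat F
\iff
\widehat E\overset{\tiny\mbox{\rm D\!}}\leq\widehat F$ together with $EF=FE$ and (\ref{perfect-1}),
and that Theorem \ref{nice---6} lets you replace the conjunction ``$\widehat E\overset{\tiny\mbox{\rm D\!}}\leq\widehat F$ and $EF=FE$'' by $\widehat E\overset{\tiny\mbox{\rm DM\!-}\#}\leq\widehat F$; since both cited theorems precede this one and neither depends on it, the substitution is a clean, non-circular three-line deduction. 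The paper instead proves the statement from scratch: for ``$\Leftarrow$'' it invokes the canonical form (\ref{DSPO-Char-2}) from Theorem \ref{dms-fenjie}, computes $\left(F_0-E_0\right)EE^{\#}$, $\left(E-F\right)E_0E^{\#}$, $EE^{\#}\left(F_0-E_0\right)$ and $E^{\#}E_0\left(E-F\right)$ blockwise to pin down $F_4=E_4-D_2E_4D_1^{-1}$ and $F_2=E_2-D_1^{-1}E_2D_2$, and matches the result against (\ref{pei-DSPO-Char-2}); for ``$\Rightarrow$'' it uses Theorem \ref{D-DM-Sharp-Th} and verifies (\ref{perfect-1}) directly from (\ref{pei-DSPO-Char-2}). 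This is exactly the ``alternative'' you sketch, including the same two block identities, so you have in effect reproduced the paper's proof as your fallback. What your chaining buys is economy and a conceptual point the paper leaves implicit, namely that this theorem is a corollary of Theorems \ref{The-nice3} and \ref{nice---6}; what the paper's computation buys is a self-contained Section \ref{Section-4-Dual-minus-sharp-PO} argument that exhibits the explicit dual-part blocks linking (\ref{DSPO-Char-2}) to (\ref{pei-DSPO-Char-2}). Either write-up is acceptable; just be sure, if you present only the short deduction, to state explicitly that condition (\ref{go2}) is the conjunction of $EF=FE$ with (\ref{perfect-1}), since that decomposition is what makes the substitution work.
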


\begin{proof}
``$\Leftarrow$'' \
Let $\widehat E=E+\varepsilon E_0$,
$\widehat F=F+\varepsilon F_0\in\mathbb{D}_n^{\tiny \mbox{\rm CM}}$,
and
 $\widehat E\overset{\tiny\mbox{\rm DM\!-}\#}\leq\widehat F$.
Then the forms of $\widehat E$ and $\widehat F$ are as in
 (\ref{DSPO-Char-2}).
It is easy to check that
\begin{align}
\label{20231008-1}
\left(F_0-E_0\right)EE^{\#}
&=P
\begin{pmatrix}
O&F_2-E_2&O\\
F_4-E_4&F_5&F_6\\
O&F_8&O\end{pmatrix}
\begin{pmatrix}
I_{{\rk}\left( E \right)}&O&O\\
O&O&O\\
O&O&O
\end{pmatrix}P^{-1}
=P\begin{pmatrix}
O&O&O\\
F_4-E_4&O&O\\
O&O&O
\end{pmatrix}P^{-1},
\\
\nonumber
\left(E-F\right)E_0E^{\#}
&=
P\begin{pmatrix}
O&O&O\\
O&{ -D_2}&O\\
O&O&O
\end{pmatrix}
\begin{pmatrix}
E_1&E_2&E_3\\
E_4&O&O\\
E_7&O&O
\end{pmatrix}
\begin{pmatrix}
D_1^{-1}&O&O\\
O&O&O\\
O&O&O
\end{pmatrix}P^{-1}=P\begin{pmatrix}
O&O&O\\
-D_2E_4D_1^{-1}&O&O\\
O&O&O
\end{pmatrix}P^{-1}.
\end{align}
It follows from $\left(F_0-E_0\right)EE^{\#}=\left(E-F\right)E_0E^{\#}$
that   $F_4=E_4-D_2E_4D_1^{-1}$.

Since
\begin{align}
\label{20231008-2}
EE^{\#}\left(F_0-E_0\right)
&
=
P\begin{pmatrix}
I_{{\rk}\left( E \right)}&O&O\\
O&O&O\\
O&O&O
\end{pmatrix}
\begin{pmatrix}
O&F_2-E_2&O\\
F_4-E_4&F_5&F_6\\
O&F_8&O\end{pmatrix}P^{-1}
=P\begin{pmatrix}
O&F_2-E_2&O\\
O&O&O\\
O&O&O
\end{pmatrix}P^{-1},
\\
\nonumber
E^{\#} E_0\left(E-F\right)
&
=
P\begin{pmatrix}
D_1^{-1}&O&O\\
O&O&O\\
O&O&O
\end{pmatrix}
\begin{pmatrix}
E_1&E_2&E_3\\
E_4&O&O\\
E_7&O&O
\end{pmatrix}
\begin{pmatrix}
O&O&O\\
O&-D_2&O\\
O&O&O
\end{pmatrix}
P^{-1}
=
P\begin{pmatrix}
O&-D_1^{-1}E_2D_2&O\\
O&O&O\\
O&O&O
\end{pmatrix}P^{-1},
\end{align}
applying $EE^{\#}\left(F_0-E_0\right)=E^{\#} E_0\left(E-F\right)$
we have  $F_2=E_2-D_1^{-1}E_2D_2$.

Substituting $F_4=E_4-D_2E_4D_1^{-1}$ and $F_2=E_2-D_1^{-1}E_2D_2$
into (\ref{DSPO-Char-2}) gives (\ref{pei-DSPO-Char-2}).
Therefore,
$\widehat E\overset{\tiny\mbox{\rm  D \!-}\#}\leq\widehat F$ holds.

``$\Rightarrow$''
\
Let $\widehat E\overset{\tiny\mbox{\rm  D \!-}\#}\leq\widehat F$.
It follows from Theorem \ref{D-DM-Sharp-Th}
that
$\widehat E\overset{\tiny\mbox{\rm DM\!-}\#}\leq\widehat F$
and
  the the forms of
$\widehat E$ and $\widehat F$ are as in (\ref{pei-DSPO-Char-2}).
Since
\begin{align*}
\left(F_0-E_0\right)EE^{\#}
&
=
\left(E-F\right)E_0E^{\#}=
P\begin{pmatrix}
O&O&O\\
-D_2E_4D_1^{-1}&O&O\\
O&O&O
\end{pmatrix}P^{-1},
\\
EE^{\#}\left(F_0-E_0\right)
&
=
E^{\#} E_0\left(E-F\right)=
P\begin{pmatrix}
O&-D_1^{-1}E_2D_2&O\\
O&O&O\\
O&O&O
\end{pmatrix}P^{-1},
\end{align*}
then
 (\ref{perfect-1}) holds.
\end{proof}

Next,
we consider the relationship between the Dual-minus sharp order and the G-sharp partial  order.

\begin{theorem}
\label{G-DM-Sharp-Th}
Let
$\widehat  E=E+\varepsilon E_0$,
$\widehat  F=F+\varepsilon F_0\in\mathbb{D}_n^{\tiny \mbox{\rm CM}}$.
If $\widehat E\overset{\tiny\mbox{\rm G\!-}\#}\leq\widehat F$, then
$\widehat E\overset{\tiny\mbox{\rm DM\!-}\#}\leq\widehat F$.
\end{theorem}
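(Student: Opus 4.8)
The plan is to repeat, almost verbatim, the argument proving Theorem~\ref{D-DM-Sharp-Th}, only using the G-sharp canonical form in place of the D-sharp one. First I would let $\widehat E=E+\varepsilon E_0$, $\widehat F=F+\varepsilon F_0\in\mathbb{D}_n^{\tiny \mbox{\rm CM}}$ with $\rk(E)=r_e$ and $\rk(F)=r_f$, and assume $\widehat E\overset{\tiny\mbox{\rm G\!-}\#}\leq\widehat F$. Lemma~\ref{tianheDPSPO-Char-1-Th}(3) gives $EF=E^2=FE$, hence $E\overset\#\leq F$ by Lemma~\ref{bbbbbb}(4) (so in particular $E\leq F$), and Lemma~\ref{tianheDPSPO-Char-1-Th}(4) supplies a nonsingular $P$ putting $\widehat E,\widehat F$ in the form (\ref{DPSPO-Char-1-1}).

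Next I would subtract the two decompositions:
\begin{align*}
\widehat F-\widehat E
=P\begin{pmatrix}O&O&O\\O&D_2&O\\O&O&O\end{pmatrix}P^{-1}
+\varepsilon P\begin{pmatrix}O&O&O\\O&F_5&F_6\\O&F_8&O\end{pmatrix}P^{-1},
\end{align*}
with $D_2\in\mathbb{R}^{(r_f-r_e)\times(r_f-r_e)}$ nonsingular. The only real computation is the rank of $\left(\begin{smallmatrix}F_0-E_0&F-E\\F-E&O\end{smallmatrix}\right)$: conjugating by $\mathrm{diag}(P,P)$ turns it into a matrix all of whose nonzero rows and columns lie in the second and third block rows/columns of each copy, and then the invertible $D_2$ appearing in the $F-E$ blocks lets one clear $F_5$, $F_6$ and $F_8$ by elementary row and column operations --- exactly as in the Remark after Theorem~\ref{nice-3} and in the proof of Theorem~\ref{D-DM-Sharp-Th} --- leaving a block of rank $2\rk(D_2)=2\rk(F-E)$. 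Hence $\rk\left(\begin{smallmatrix}F_0-E_0&F-E\\F-E&O\end{smallmatrix}\right)=2\rk(F-E)$, so by Lemma~\ref{Wang2021MAMT-Th2.1} (equivalently Lemma~\ref{Th-Rank=DMPGI}) the DMPGI of $\widehat F-\widehat E$ exists.

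Finally, having $E\overset\#\leq F$ and the existence of the DMPGI of $\widehat F-\widehat E$, Definition~\ref{DM-Sharp-def} gives $\widehat E\overset{\tiny\mbox{\rm DM\!-}\#}\leq\widehat F$. The whole argument is essentially bookkeeping; the only step deserving an explicit computation is the rank identity for $\left(\begin{smallmatrix}F_0-E_0&F-E\\F-E&O\end{smallmatrix}\right)$, and I expect no genuine obstacle, since on the outer blocks the dual part of $\widehat F-\widehat E$ inherits the zero pattern of its standard part. It would also be natural to record afterwards, as for the D-sharp case, that the converse fails: the G-sharp canonical form (\ref{DPSPO-Char-1-1}) is the strict specialization of (\ref{DSPO-Char-2}) obtained by forcing the $(1,2)$ and $(2,1)$ dual blocks of $\widehat F$ to coincide with those of $\widehat E$, so a separating example analogous to the one following Theorem~\ref{D-DM-Sharp-Th} can be given.
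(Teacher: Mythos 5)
Your proposal is correct and follows essentially the same route as the paper: invoke Lemma \ref{tianheDPSPO-Char-1-Th} to put $\widehat E,\widehat F$ in the form (\ref{DPSPO-Char-1-1}), subtract to see that the dual part of $\widehat F-\widehat E$ vanishes outside the blocks carried by $D_2$, conclude that the DMPGI of $\widehat F-\widehat E$ exists, and apply Definition \ref{DM-Sharp-def}. The only difference is that you spell out the rank computation that the paper leaves implicit, which is a harmless (indeed helpful) elaboration.
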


\begin{proof}
 Let $\widehat  E$, $\widehat  F\in\mathbb{D}_n^{\tiny \mbox{\rm CM}}$,
${\rk}\left( E \right)=r_e$, ${\rk}\left( F \right)=r_f$ 
and 
$\widehat E\overset{\tiny\mbox{\rm G\!-}\#}\leq\widehat F$.
Then  $E\overset\#\leq F$
  and
 the forms of $\widehat E$ and  $\widehat F$ are as in (\ref{DPSPO-Char-1-1})
  by Lemma \ref{tianheDPSPO-Char-1-Th}.
It follows from
\begin{align}
\nonumber
\widehat F-\widehat E
 =\left(F-E\right)+\varepsilon \left(F_0-E_0\right)=
P\begin{pmatrix}
O&O&O\\
O&D_2&O\\
O&O&O
\end{pmatrix}
P^{-1}
+\varepsilon P\begin{pmatrix}
O&O&O\\
O&F_5&F_6\\
O&F_8&O\end{pmatrix}P^{-1}
\end{align}
that the DMPGI of $\widehat F-\widehat E$ exists.
 Therefore,  according to Definition
\ref{DM-Sharp-def},
we have $\widehat E\overset{\tiny\mbox{\rm DM\!-}\#}\leq\widehat F$.
\end{proof}

The converse of Theorem \ref{G-DM-Sharp-Th} is not true.

\begin{example}
Let
\begin{align}
 \nonumber
\widehat E=E+\varepsilon E_0
=
 \begin{pmatrix}
1&0&0\\0&0&0\\0&0&0\end{pmatrix}
+\varepsilon
\begin{pmatrix}
1&2&3\\2&0&0\\3&0&0
\end{pmatrix}, \
\widehat F=F+\varepsilon F_0
=
 \begin{pmatrix}
1&0&0\\0&1&0\\0&0&0
\end{pmatrix}
+\varepsilon  \begin{pmatrix}
1&6&3\\6&0&0\\3&0&0
\end{pmatrix}.
\end{align}
Then
\begin{align}
\widehat F-\widehat E=
\begin{pmatrix}
0&0&0\\0&1&0\\0&0&0
\end{pmatrix}
+\varepsilon  \begin{pmatrix}
0&4&0\\4&0&0\\0&0&0
\end{pmatrix},\
{\rk}\left(\begin{matrix}
F_0-E_0  &F-E\\ F-E  &O
\end{matrix}\right)
=2=2{\rk}\left( F-E \right) .
\end{align}
 Therefore, the  DMPGI  of $\widehat F-\widehat E$  exists.
Since $E^2=E F=  FE $,  we have $E\overset\#\leq F$.
It follows that
 $\widehat E\overset{\tiny\mbox{\rm DM\!-}\#}\leq\widehat F$.

 And since $$E_0E
=\begin{pmatrix}
1&0&0\\2&0&0\\3&0&0
\end{pmatrix},\
F_0E=
\begin{pmatrix}
1&0&0\\6&0&0\\3&0&0
\end{pmatrix},$$
$E_0E \neq F_0E$.
It is evident that $\widehat E$ cannot  be below $\widehat F$ 
under the  G-sharp partial order by Lemma \ref{tianheDPSPO-Char-1-Th}(3).
\end{example}

In the following Theorem \ref{nice---5},
 based on  the Dual-minus sharp partial order, we provide  a  characterization of the G-sharp  partial order.
\begin{theorem}
\label{nice---5}
Let $\widehat E=E+\varepsilon E_0$,
$\widehat F=F+\varepsilon F_0\in\mathbb{D}_n^{\tiny \mbox{\rm CM}}$.
Then $\widehat E\overset{\tiny\mbox{\rm  G \!-}\#}\leq\widehat F$ if and only if
 $\widehat E\overset{\tiny\mbox{\rm DM\!-}\#}\leq\widehat F$
 and
 \begin{align}
\label{perfect-4}
\left(F_0-E_0\right)EE^{\#}=O, \
EE^{\#}\left(F_0-E_0\right)=O.
\end{align}
\end{theorem}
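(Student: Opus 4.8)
The plan is to mirror the proof of Theorem \ref{nice---4}, replacing the D-sharp normal form by the G-sharp normal form (\ref{DPSPO-Char-1-1}) and using Theorem \ref{G-DM-Sharp-Th} in place of Theorem \ref{D-DM-Sharp-Th}. Throughout write $r_e=\rk(E)$ and $r_f=\rk(F)$.

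For the ``$\Leftarrow$'' direction I would start from the assumption $\widehat E\overset{\tiny\mbox{\rm DM\!-}\#}\leq\widehat F$, so by Theorem \ref{dms-fenjie} there is a nonsingular $P$ putting $\widehat E,\widehat F$ into the block form (\ref{DSPO-Char-2}). The one computation that matters is evaluating the two products in these coordinates, exactly as in (\ref{20231008-1})--(\ref{20231008-2}): since $EE^{\#}=P\,\mathrm{diag}(I_{r_e},O,O)\,P^{-1}$, one gets
\[
\left(F_0-E_0\right)EE^{\#}=P\begin{pmatrix}O&O&O\\ F_4-E_4&O&O\\ O&O&O\end{pmatrix}P^{-1},\qquad
EE^{\#}\left(F_0-E_0\right)=P\begin{pmatrix}O&F_2-E_2&O\\ O&O&O\\ O&O&O\end{pmatrix}P^{-1}.
\]
Imposing (\ref{perfect-4}) then forces $F_4=E_4$ and $F_2=E_2$; substituting these back into (\ref{DSPO-Char-2}) yields precisely the normal form (\ref{DPSPO-Char-1-1}), whence $\widehat E\overset{\tiny\mbox{\rm  G \!-}\#}\leq\widehat F$ by Lemma \ref{tianheDPSPO-Char-1-Th}(4).

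For the ``$\Rightarrow$'' direction, assume $\widehat E\overset{\tiny\mbox{\rm  G \!-}\#}\leq\widehat F$. Theorem \ref{G-DM-Sharp-Th} already gives $\widehat E\overset{\tiny\mbox{\rm DM\!-}\#}\leq\widehat F$, and Lemma \ref{tianheDPSPO-Char-1-Th}(4) gives $\widehat E,\widehat F$ in the form (\ref{DPSPO-Char-1-1}). In those $P$-coordinates the dual difference $F_0-E_0$ has zero first block row and zero first block column, so multiplying on either side by $EE^{\#}=P\,\mathrm{diag}(I_{r_e},O,O)\,P^{-1}$ annihilates it, which is exactly (\ref{perfect-4}).

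I do not expect a genuine obstacle here: the argument is a transcription of the proof of Theorem \ref{nice---4} (and the analogue for Theorem \ref{The-nice4}), with the D-sharp data ($F_2=E_2-D_1^{-1}E_2D_2$, $F_4=E_4-D_2E_4D_1^{-1}$) replaced by the G-sharp data ($F_2=E_2$, $F_4=E_4$). The only care needed is the bookkeeping of the block multiplications, and these have already been performed verbatim in the earlier proofs, so no new idea is required.
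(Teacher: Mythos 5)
Your proposal is correct and follows essentially the same route as the paper's own proof: the ``$\Leftarrow$'' direction uses the normal form (\ref{DSPO-Char-2}) from Theorem \ref{dms-fenjie} and the block computation forcing $F_4=E_4$, $F_2=E_2$, then invokes Lemma \ref{tianheDPSPO-Char-1-Th}(4); the ``$\Rightarrow$'' direction uses Theorem \ref{G-DM-Sharp-Th} and the form (\ref{DPSPO-Char-1-1}) to verify (\ref{perfect-4}) directly. The block multiplications you describe are exactly the ones the paper performs (cf.\ (\ref{20231008-1})--(\ref{20231008-2})), so there is nothing further to add.
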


\begin{proof}
``$\Leftarrow$'' \
Let $\widehat E=E+\varepsilon E_0$,
$\widehat F=F+\varepsilon F_0\in\mathbb{D}_n^{\tiny \mbox{\rm CM}}$,
and
 $\widehat E\overset{\tiny\mbox{\rm DM\!-}\#}\leq\widehat F$.
 Then the forms of $\widehat E$ and $\widehat F$ are as in
 (\ref{DSPO-Char-2}).
Since
 $EE^{\#}\left(F_0-E_0\right)=O$
 and
 $\left(F_0-E_0\right)EE^{\#}=O$, 
we get  $F_4=E_4$ and $F_2=E_2$.
Therefore,
$\widehat E\overset{\tiny\mbox{\rm  G \!-}\#}\leq\widehat F$ holds  by Lemma \ref{tianheDPSPO-Char-1-Th}(4).

``$\Rightarrow$'' \
Let $\widehat E\overset{\tiny\mbox{\rm  D \!-}\#}\leq\widehat F$.
It follows from Theorem \ref{G-DM-Sharp-Th}
that
$\widehat E\overset{\tiny\mbox{\rm DM\!-}\#}\leq\widehat F$
and
  the the forms of
$\widehat E$ and $\widehat F$ are as in (\ref{DPSPO-Char-1-1}).
It is easy to check that
 $\left(F_0-E_0\right)EE^{\#}=O$
 and
$EE^{\#}\left(F_0-E_0\right)=O$.
Therefore, (\ref{perfect-4}) holds.
\end{proof}

\section{Dual-minus star Partial Order}
\label{Section-5-Dual-minus-star-PO}
In Section \ref{Section-4-Dual-minus-sharp-PO},
we introduce the Dual-minus sharp partial order
by applying the sharp partial order and the DMPGI.
In this section, we introduce  a new Dual star partial order ---    Dual-minus star partial order,
which is different from the D-star partial order and the P-star partial order.
The proofs in this section  are similar to that in Section \ref{Section-4-Dual-minus-sharp-PO}.

\begin{definition}
\label{def-DMS}
Let
$\widehat E=E+\varepsilon E_0$,
$\widehat F=F+\varepsilon F_0\in\mathbb{D}^{m\times n}$,
and $\widehat  E$ and $\widehat  F$ have the DMPGIs.
 Denote
$\widehat E\overset{\tiny\mbox{\rm DM\!-}{*}}\leq\widehat F$:
\begin{align}
\label{nice-def}
E \overset{*}\leq F \
\mbox{and the DMPGI of} \ \widehat F-\widehat E \ \mbox{exists}.
\end{align}
We call $\widehat E$ is below $\widehat F$  under the
 Dual-minus star order.
\end{definition}

 \begin{theorem}
\label{dmstar-fenjieshi}
Let
$\widehat  E=E+\varepsilon E_0$,
$\widehat  F=F+\varepsilon F_0\in\mathbb{D}^{m\times n}$,
${\rk}\left( E \right)=r_e$, ${\rk}\left( F \right)=r_f$,
and $\widehat  E$ and $\widehat  F$ have the DMPGIs.
Then  $\widehat E\overset{\tiny\mbox{\rm DM\!-}{*}}\leq\widehat F$
 if and only if there are
 orthogonal matrices $U$ and $V$ such that
{\small
\begin{align}
\label{DSPO-Char-2star}
\left\{\begin{array}{l}\widehat E
=U\begin{pmatrix}
D_1&O&O\\
O&O&O\\
O&O&O\end{pmatrix}V^{\rm {T}}
+\varepsilon U
\begin{pmatrix}
E_1&E_2&E_3\\
E_4&O&O\\
E_7&O&O
\end{pmatrix}V^{\rm {T}},\\
\widehat F=
U\begin{pmatrix}
D_1&O&O\\
O&D_2&O\\
O&O&O
\end{pmatrix}
V^{\rm {T}}
+\varepsilon U\begin{pmatrix}
E_1&F_2&E_3\\
F_4&F_5&F_6\\
E_7&F_8&O\end{pmatrix}V^{\rm {T}},
\end{array}\right.
\end{align}}
where
$D_1 \in \mathbb{R}^{r_e\times r_e}$
and
$D_2 \in \mathbb{R}^{(r_f-r_e)\times (r_f-r_e)}$
  are invertible.
\end{theorem}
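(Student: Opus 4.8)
The plan is to mirror the argument of Theorem \ref{dms-fenjie}, replacing the similarity decomposition of Lemma \ref{xiaotuilun}/Lemma \ref{bbbbbb} by the orthogonal star decomposition of Lemma \ref{peiRStar-Def}, and using Lemma \ref{Wang2021MAMT-Th2.1} (equivalently Lemma \ref{Th-Rank=DMPGI}) to read off the vanishing blocks forced by the existence of the various DMPGIs.

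First I would treat necessity. Assume $\widehat E\overset{\tiny\mbox{\rm DM\!-}{*}}\leq\widehat F$; by Definition \ref{def-DMS} this means $E\overset{*}\leq F$ and the DMPGI of $\widehat F-\widehat E$ exists. Applying Lemma \ref{peiRStar-Def}(5) there are orthogonal $U,V$ with $E=U\,\mathrm{diag}(D_1,O,O)\,V^{\rm T}$ and $F=U\,\mathrm{diag}(D_1,D_2,O)\,V^{\rm T}$, $D_1,D_2$ invertible. Write $E_0=U(E_{ij})V^{\rm T}$, $F_0=U(F_{ij})V^{\rm T}$ in the conformal $3\times3$ block partition. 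Since $\widehat E$ has a DMPGI, Lemma \ref{Wang2021MAMT-Th2.1} gives $\rk\left(\begin{smallmatrix}E_0&E\\E&O\end{smallmatrix}\right)=2\rk(E)$; but elementary block row/column reductions using the invertible $D_1$ show this rank equals $2\rk(D_1)+\rk\left(\begin{smallmatrix}E_{22}&E_{23}\\E_{32}&E_{33}\end{smallmatrix}\right)$, whence $E_{22}=E_{23}=E_{32}=E_{33}=O$, i.e.\ the dual part of $\widehat E$ has the stated form. The same computation applied to $\widehat F$, now absorbing with both $D_1$ and $D_2$, forces $F_{33}=O$.

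Next I would exploit the DMPGI of $\widehat F-\widehat E$. Here $F-E=U\,\mathrm{diag}(O,D_2,O)\,V^{\rm T}$ and $F_0-E_0$ has the block shape just obtained, so Lemma \ref{Wang2021MAMT-Th2.1} gives $\rk\left(\begin{smallmatrix}F_0-E_0&F-E\\F-E&O\end{smallmatrix}\right)=2\rk(F-E)=2\rk(D_2)$. Reducing this $6\times6$ block matrix with the invertible $D_2$ blocks — one clears the $(2,2),(2,1),(2,3),(1,2),(3,2)$ blocks of $F_0-E_0$ by row/column operations against the rows/columns carrying $D_2$, without disturbing the two $D_2$ blocks — shows the rank equals $2\rk(D_2)+\rk\left(\begin{smallmatrix}F_1-E_1&F_3-E_3\\F_7-E_7&O\end{smallmatrix}\right)$. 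Equating with $2\rk(D_2)$ yields $F_1=E_1$, $F_3=E_3$, $F_7=E_7$, which is precisely (\ref{DSPO-Char-2star}).

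For sufficiency, given $\widehat E,\widehat F$ in the form (\ref{DSPO-Char-2star}), Lemma \ref{peiRStar-Def}(5) immediately gives $E\overset{*}\leq F$, while running the same block-rank identity of the previous paragraph in reverse gives $\rk\left(\begin{smallmatrix}F_0-E_0&F-E\\F-E&O\end{smallmatrix}\right)=2\rk(F-E)$, so the DMPGI of $\widehat F-\widehat E$ exists by Lemma \ref{Wang2021MAMT-Th2.1}; hence $\widehat E\overset{\tiny\mbox{\rm DM\!-}{*}}\leq\widehat F$ by Definition \ref{def-DMS}. The only delicate point is the bookkeeping in the two block-rank reductions; since everything is carried out in the fixed orthogonal coordinates $U,V$ and the star decomposition has no off-diagonal coupling (unlike the general minus case, where the blocks $R,S$ appear), I expect no genuine obstacle beyond careful block arithmetic.
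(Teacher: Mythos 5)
Your proposal is correct and follows essentially the same route as the paper's proof: both use Lemma \ref{peiRStar-Def}(5) to put the standard parts in star-diagonal form, Lemma \ref{Wang2021MAMT-Th2.1} to pin down the zero blocks in the dual parts of $\widehat E$ and $\widehat F$, and the same block-rank reduction on $\widehat F-\widehat E$ to force $F_1=E_1$, $F_3=E_3$, $F_7=E_7$. The only difference is that you spell out the rank computations that justify the zero-block structure of $E_0$ and $F_0$, which the paper simply asserts when it ``denotes'' the forms.
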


\begin{proof}
Let $\widehat E\overset{\tiny\mbox{\rm DM\!-}{*}}\leq\widehat F$.
Then  $E \overset{*}\leq F$.
According to Lemma \ref{peiRStar-Def},
we obtain that  $E$ and $F$ are the forms of in (\ref{2.1}).
Because $\widehat  E$ and $\widehat  F$ have the DMPGIs,
we can  denote
\begin{align}
\label{SHOUJI}
\left\{\begin{array}{l}\widehat E
=U\begin{pmatrix}
D_1&O&O\\
O&O&O\\
O&O&O\end{pmatrix}V^{\rm {T}}
+\varepsilon U
\begin{pmatrix}
E_1&E_2&E_3\\
E_4&O&O\\
E_7&O&O
\end{pmatrix}V^{\rm {T}},\\
\widehat F=
U\begin{pmatrix}D_1&O&O\\
O&D_2&O\\
O&O&O
\end{pmatrix}
V^{\rm {T}}
+\varepsilon U\begin{pmatrix}
F_1&F_2&F_3\\
F_4&F_5&F_6\\
F_7&F_8&O\end{pmatrix}V^{\rm {T}},
\end{array}\right.
\end{align}
where
$D_1 \in \mathbb{R}^{r_e\times r_e}$
and
$D_2 \in \mathbb{R}^{(r_f-r_e)\times (r_f-r_e)}$
are invertible,
$ E_1, F_1\in \mathbb{R}^{r_e\times r_e}$
and
$F_5\in \mathbb{R}^{(r_f-r_e)\times (r_f-r_e)}$.

And since
the DMPGI of $\widehat F-\widehat E$ exists,
${\rk}\left(\begin{smallmatrix}
F_0-E_0&F-E
\\F-E&O\end{smallmatrix}\right)
=2~{\rk}(F-E)$,
 that is,
\small{
\begin{align*}
&\rk
\begin{pmatrix}
F_1-E_1&F_2-E_2&F_3-E_3&O&O&O\\
F_4-E_4&F_5&F_6&O&D_2&O\\
F_7-E_7&F_8&O&O&O&O\\
O&O&O&O&O&O\\
O&D_2&O&O&O&O\\
O&O&O&O&O&O
\end{pmatrix}=
\rk
\begin{pmatrix}
F_1-E_1&O&F_3-E_3&O&O&O\\
O&O&O&O&D_2&O\\
F_7-E_7&O&O&O&O&O\\
O&O&O&O&O&O\\
O&D_2&O&O&O&O\\
O&O&O&O&O&O
\end{pmatrix}=2\rk\left(D_2\right).
\end{align*}}
It follows that $F_1=E_1$, $F_3=E_3$ and $F_7=E_7$.
Therefore,   (\ref{DSPO-Char-2star}) holds.

On the contrary, the conclusion is valid.
\end{proof}

\begin{theorem}
The Dual-minus star order is a partial order.
\end{theorem}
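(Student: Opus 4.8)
The plan is to follow the same three-step template used in the proof of Theorem~\ref{DMS-P}, the only difference being that the star partial order on $\mathbb{R}^{m\times n}$ is governed by orthogonal equivalence (Lemma~\ref{peiRStar-Def}) rather than by a single similarity, so the ambient normal form will be carried by a pair of orthogonal matrices $U,V$ instead of one nonsingular $P$. Reflexivity is immediate: $E\overset{*}\leq E$ and $\widehat F-\widehat E=O$, whose DMPGI trivially exists, so $\widehat E\overset{\tiny\mbox{\rm DM\!-}{*}}\leq\widehat E$.

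For antisymmetry, I would assume $\widehat E\overset{\tiny\mbox{\rm DM\!-}{*}}\leq\widehat F$ and $\widehat F\overset{\tiny\mbox{\rm DM\!-}{*}}\leq\widehat E$. Then $E\overset{*}\leq F$ and $F\overset{*}\leq E$, and since the star partial order on $\mathbb{R}^{m\times n}$ is antisymmetric we get $E=F$. Because the DMPGI of $\widehat F-\widehat E$ exists, Lemma~\ref{Wang2021MAMT-Th2.1} gives $\rk\left(\begin{smallmatrix}F_0-E_0&F-E\\F-E&O\end{smallmatrix}\right)=2\rk(F-E)$, which with $F-E=O$ collapses to $\rk\left(\begin{smallmatrix}F_0-E_0&O\\O&O\end{smallmatrix}\right)=0$, forcing $F_0-E_0=O$; hence $\widehat E=\widehat F$.

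For transitivity, suppose $\widehat E\overset{\tiny\mbox{\rm DM\!-}{*}}\leq\widehat F$ and $\widehat F\overset{\tiny\mbox{\rm DM\!-}{*}}\leq\widehat G$ with $\widehat G=G+\varepsilon G_0\in\mathbb{D}^{m\times n}$ having a DMPGI. First, $E\overset{*}\leq F$ and $F\overset{*}\leq G$ give $E\overset{*}\leq G$ by transitivity of the real star order. The substantive step is to produce one pair of orthogonal matrices $U,V$ simultaneously normalizing $\widehat E,\widehat F,\widehat G$: $E=U\,\mathrm{diag}(D_1,O,O,O)\,V^{\mathrm T}$, $F=U\,\mathrm{diag}(D_1,D_2,O,O)\,V^{\mathrm T}$, $G=U\,\mathrm{diag}(D_1,D_2,D_3,O)\,V^{\mathrm T}$ with $D_1,D_2,D_3$ invertible, while Theorem~\ref{dmstar-fenjieshi} forces the dual parts to share the ``$\widehat E$-blocks'' across all three and the ``$\widehat F$-blocks'' across $\widehat F$ and $\widehat G$ (exactly as the blocks $E_1,E_3,E_7$ and $F_5,F_6,F_8$ propagate in the proof of Theorem~\ref{DMS-P}). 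From this explicit form I would read off $\widehat G-\widehat E=(G-E)+\varepsilon(G_0-E_0)$ and verify by block bookkeeping that $\rk\left(\begin{smallmatrix}G_0-E_0&G-E\\G-E&O\end{smallmatrix}\right)=2\rk(G-E)$, so the DMPGI of $\widehat G-\widehat E$ exists by Lemma~\ref{Wang2021MAMT-Th2.1}; combined with $E\overset{*}\leq G$ this is precisely $\widehat E\overset{\tiny\mbox{\rm DM\!-}{*}}\leq\widehat G$.

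The hard part will be the simultaneous orthogonal reduction in the transitivity step: the decompositions supplied separately by the two hypotheses come with a priori different orthogonal matrices, and one must align them into common $U$ and $V$. This rests on the rigidity of the canonical form in Lemma~\ref{peiRStar-Def}(5) — the nonzero block of $F$ is pinned down by that of $G$ on the row and column spaces of $F$, and likewise $E$ sits rigidly inside $F$ — so the chain $E\overset{*}\leq F\overset{*}\leq G$ admits a single orthogonal normal form, after which Theorem~\ref{dmstar-fenjieshi} dictates the structure of the dual parts. Once that common form is in hand, the remaining rank computation is routine and entirely parallel to the transitivity argument already carried out for Theorem~\ref{DMS-P}.
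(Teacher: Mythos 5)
Your proposal is correct and follows essentially the same route as the paper: reflexivity is immediate, antisymmetry combines antisymmetry of the real star order with the rank identity $\rk\left(\begin{smallmatrix}F_0-E_0&O\\O&O\end{smallmatrix}\right)=2\rk(F-E)=0$ to force $\widehat E=\widehat F$, and transitivity rests on a simultaneous orthogonal block normal form for $\widehat E,\widehat F,\widehat G$ obtained from Theorem~\ref{dmstar-fenjieshi}, exactly as the paper does (it spells out the analogous simultaneous form in the proof of Theorem~\ref{DMS-P} and invokes the same argument here). No gaps.
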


\begin{proof}
 Let
$\widehat E=E+\varepsilon E_0$,
$\widehat F=F+\varepsilon F_0\in\mathbb{D}^{m\times n}$,
 ${ \rk}\left( E \right)=r_e$, ${ \rk}\left( F \right)=r_f$,
  $\widehat  E$ and $\widehat  F$ have the DMPGIs,
and
$\widehat E\overset{\tiny\mbox{\rm DM\!-}{*}}\leq\widehat F$.
   Next,
  we prove that Dual-minus star order is reflexive,
  anti-symmetric and transitive.

1. Reflexivity: \ It is trivial.

2. Antisymmetry: \
Suppose that  $\widehat F\overset{\tiny\mbox{\rm DM\!-}*}\leq\widehat E$.
Since $\widehat E\overset{\tiny\mbox{\rm DM\!-}*}\leq\widehat F$
  and $\widehat F\overset{\tiny\mbox{\rm DM\!-}*}\leq\widehat E$,
we get that the DMPGI of $\widehat F-\widehat E$ exists, $E \overset{*}\leq F$
 and $F \overset{*}\leq E$.
Since
$E \overset{*}\leq F$
 and $F \overset{*}\leq E$, we have $E=F$.
It follows that
\begin{align*}
{\rk}\begin{pmatrix}F_0-E_0&F-E\\F-E&O\end{pmatrix}
=
{\rk}\begin{pmatrix}F_0-E_0&O\\O&O\end{pmatrix}
=
2{\rk}(F-E)=O.
\end{align*}
Therefore,  $E_0=F_0$. So $\widehat E =\widehat F$.

3. Transitivity:
\
Suppose that
 $\widehat  G=G+\varepsilon G_0\in\mathbb{D}^{m\times n}$,
${\rm \rk}\left( G \right)=r_g$,
 $\widehat G$  has the  DMPGI and
 $\widehat F\overset{\tiny\mbox{\rm DM\!-}{*}}\leq\widehat G$.
Since $\widehat E\overset{\tiny\mbox{\rm DM\!-}{*}}\leq\widehat F$
  and $\widehat F\overset{\tiny\mbox{\rm DM\!-}{*}}\leq\widehat G$,
then  $E \overset{*}\leq F$  and $F \overset{*}\leq G$.
 It is easy to check that  $E \overset{*}\leq G$ and
the DMPGI of $\widehat G-\widehat E$ exists
 by applying
 Theorem \ref{dmstar-fenjieshi}. Therefore,
$\widehat E\overset{\tiny\mbox{\rm DM\!-}{*}}\leq\widehat G$.
\end{proof}

Next, we study the relationship between the Dual-minus star order and the Dual-minus order.

\begin{theorem}
Let
$\widehat  E=E+\varepsilon E_0$,
$\widehat  F=F+\varepsilon F_0\in\mathbb{D}^{m\times n}$,
and $\widehat  E$ and $\widehat  F$ have the DMPGIs.
If $\widehat E\overset{\tiny\mbox{\rm DM\!-}{*}}\leq\widehat F$, then
$\widehat E\overset{\tiny\mbox{\rm D\!}}\leq\widehat F$.
\end{theorem}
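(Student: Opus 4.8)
The plan is to unwind both definitions and observe that the hypotheses required for the Dual-minus order are already contained in what the Dual-minus star order grants. First I would invoke Definition \ref{def-DMS}: the assumption $\widehat E\overset{\tiny\mbox{\rm DM\!-}{*}}\leq\widehat F$ supplies exactly two facts, namely $E\overset{*}\leq F$ and the existence of the DMPGI of $\widehat F-\widehat E$. The latter is verbatim the second clause of (\ref{nice-def}) in Definition \ref{Dual-minusPartialOrder-Def}, so nothing has to be done for it.

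It therefore remains only to produce $E\leq F$, and this is the single piece of content. It is immediate from Lemma \ref{peiRStar-Def}: the equivalence of conditions (1) and (2) there shows that $E\overset{*}\leq F$ implies $E\leq F$ (indeed it gives the two extra symmetry identities as well, which we do not need). Hence both clauses of Definition \ref{Dual-minusPartialOrder-Def} hold and $\widehat E\overset{\tiny\mbox{\rm D\!}}\leq\widehat F$.

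An equivalent route, exactly parallel to the proof of Theorem \ref{20231209-Th4-3}, is to start from the canonical forms (\ref{DSPO-Char-2star}) provided by Theorem \ref{dmstar-fenjieshi}, compute
\[
\widehat F-\widehat E=U\begin{pmatrix}O&O&O\\ O&D_2&O\\ O&O&O\end{pmatrix}V^{\rm T}+\varepsilon U\begin{pmatrix}O&F_2-E_2&O\\ F_4-E_4&F_5&F_6\\ O&F_8&O\end{pmatrix}V^{\rm T},
\]
read off $E\leq F$ from the standard parts, and note that the displayed block structure forces $\rk\left(\begin{smallmatrix}F_0-E_0&F-E\\ F-E&O\end{smallmatrix}\right)=2\rk(F-E)$, so the DMPGI of $\widehat F-\widehat E$ exists by Lemma \ref{Wang2021MAMT-Th2.1}. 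Either way the conclusion is a direct application of Definition \ref{Dual-minusPartialOrder-Def}. I do not anticipate any genuine obstacle: this is the star-analogue of Theorem \ref{20231209-Th4-3}, and the proof reduces to the standard fact that the star partial order refines the minus partial order.
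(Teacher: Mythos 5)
Your proof is correct. The paper's own proof is exactly your second route in its tersest form: it invokes Theorem \ref{dmstar-fenjieshi} to get the canonical forms (\ref{DSPO-Char-2star}) and declares the conclusion obvious, leaving the reader to read off $E\leq F$ from the standard parts and the block structure of $\widehat F-\widehat E$ that forces $\rk\left(\begin{smallmatrix}F_0-E_0&F-E\\ F-E&O\end{smallmatrix}\right)=2\rk(F-E)$. Your primary route is genuinely different and arguably cleaner: it never touches the decomposition, but simply observes that Definition \ref{def-DMS} and Definition \ref{Dual-minusPartialOrder-Def} share the clause on the DMPGI of $\widehat F-\widehat E$ verbatim, so the whole theorem reduces to the implication $E\overset{*}\leq F\Rightarrow E\leq F$, which is the $(1)\Rightarrow(2)$ direction of Lemma \ref{peiRStar-Def}. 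That definitional argument is more elementary (it uses only a standard-part fact about real matrices), while the paper's canonical-form argument has the mild advantage of keeping the exposition uniform with the neighbouring results such as Theorem \ref{20231209-Th4-3}. Either version is complete; there is no gap.
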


\begin{proof}
Let
$\widehat  E=E+\varepsilon E_0$,
$\widehat  F=F+\varepsilon F_0\in\mathbb{D}^{m\times n}$,
 $\widehat  E$ and $\widehat  F$ have the DMPGIs,
 and $\widehat E\overset{\tiny\mbox{\rm DM\!-}{*}}\leq\widehat F$.
Then according to Theorem \ref{dmstar-fenjieshi},
 it is obvious that $\widehat E\overset{\tiny\mbox{\rm D\!}}\leq\widehat F$.
\end{proof}

\begin{theorem}
\label{ok---6}
Let $\widehat E=E+\varepsilon E_0$,
$\widehat F=F+\varepsilon F_0\in\mathbb{D}^{m\times n}$. Then
$\widehat E\overset{\tiny\mbox{\rm DM\!-}*}\leq\widehat F$  if and only if
$\widehat E\overset{\tiny\mbox{\rm D\!}}\leq\widehat F$ and
\begin{align}
\label{perfect-6}
 EF^{\rm T}=\left(EF^{\rm T} \right)^{\rm T}, \
F^{\rm T}E=\left(F^{\rm T}E \right)^{\rm T}.
\end{align}
\end{theorem}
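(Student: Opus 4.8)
The plan is to prove the characterization by reducing it, via the already-established canonical forms, to routine matrix bookkeeping. First I would handle the direction ``$\Leftarrow$''. Assume $\widehat E\overset{\tiny\mbox{\rm D\!}}\leq\widehat F$, so by Theorem \ref{fenjie-the} there are orthogonal matrices $U,V$ with $\widehat E$ and $\widehat F$ of the forms (\ref{fenjieshiA}) and (\ref{fenjieshi-B}). The standard parts then satisfy $E\leq F$; combining this with the hypotheses $EF^{\rm T}=(EF^{\rm T})^{\rm T}$ and $F^{\rm T}E=(F^{\rm T}E)^{\rm T}$ and invoking Lemma \ref{peiRStar-Def} gives $E\overset{*}\leq F$. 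Comparing the canonical form (\ref{2.1}) of the star order with (\ref{minus}) forces $R=O$ and $S=O$, so $\widehat E$ and $\widehat F$ collapse to the forms (\ref{fenjieshiAA}), (\ref{fenjieshiAB}) (with $M,N$ still free). Now the standard parts already satisfy $E\overset{*}\leq F$, and the dual part $\widehat F-\widehat E$ still has a DMPGI because $\widehat E\overset{\tiny\mbox{\rm D\!}}\leq\widehat F$ was assumed; by Definition \ref{def-DMS} this is exactly $\widehat E\overset{\tiny\mbox{\rm DM\!-}*}\leq\widehat F$. (Equivalently, one checks that the forms (\ref{fenjieshiAA}), (\ref{fenjieshiAB}) match (\ref{DSPO-Char-2star}) once one relabels $M+E_4$ as $F_4$ and $N+E_2$ as $F_2$.)

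For the direction ``$\Rightarrow$'', assume $\widehat E\overset{\tiny\mbox{\rm DM\!-}*}\leq\widehat F$. By Definition \ref{def-DMS} this gives $E\overset{*}\leq F$ together with the existence of the DMPGI of $\widehat F-\widehat E$; since $E\overset{*}\leq F$ implies $E\leq F$, Definition \ref{Dual-minusPartialOrder-Def} yields $\widehat E\overset{\tiny\mbox{\rm D\!}}\leq\widehat F$ at once. It remains to produce (\ref{perfect-6}). For this I would use Theorem \ref{dmstar-fenjieshi} to write $\widehat E$ and $\widehat F$ in the canonical forms (\ref{DSPO-Char-2star}) and then simply compute $EF^{\rm T}$, $F^{\rm T}E$ directly: since the standard parts are $U\,\mathrm{diag}(D_1,O,O)\,V^{\rm T}$ and $U\,\mathrm{diag}(D_1,D_2,O)\,V^{\rm T}$, one gets $EF^{\rm T}=U\,\mathrm{diag}(D_1^2,O,O)\,U^{\rm T}$, which is symmetric, and likewise $F^{\rm T}E=V\,\mathrm{diag}(D_1^2,O,O)\,V^{\rm T}$ is symmetric. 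This gives (\ref{perfect-6}).

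This statement is essentially a star-order analogue of Theorem \ref{nice---6}, and the paper has flagged that ``the proofs in this section are similar to that in Section \ref{Section-4-Dual-minus-sharp-PO}''; indeed the only structural difference is that one uses Lemma \ref{peiRStar-Def} and the orthogonal-similarity forms (\ref{2.1}) in place of Lemma \ref{bbbbbb} and the similarity forms (\ref{3.4}). I do not anticipate a genuine obstacle; the one point requiring a little care is verifying in the ``$\Leftarrow$'' direction that the symmetry conditions really do force $R=O$ and $S=O$ — that is, that the minus-order block structure (\ref{minus}) degenerates to the star-order block structure (\ref{2.1}) — which is exactly the content of the equivalence (1)$\Leftrightarrow$(5) in Lemma \ref{peiRStar-Def} applied to $E\leq F$ supplemented by $EF^{\rm T}=(EF^{\rm T})^{\rm T}$, $F^{\rm T}E=(F^{\rm T}E)^{\rm T}$ (equivalently condition (2) of that lemma). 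Once that reduction is in place, everything else is a direct substitution into the canonical forms, with no new ideas needed.
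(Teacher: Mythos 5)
Your proposal is correct and follows essentially the same route as the paper: both directions hinge on the observation that $E\leq F$ together with the two symmetry conditions is exactly condition (2) of Lemma \ref{peiRStar-Def} (since $(EF^{\rm T})^{\rm T}=FE^{\rm T}$ and $(F^{\rm T}E)^{\rm T}=E^{\rm T}F$), so $E\overset{*}\leq F$, while the DMPGI requirement on $\widehat F-\widehat E$ is common to both definitions; the converse reads off (\ref{perfect-6}) from the canonical form (\ref{DSPO-Char-2star}). Your detour through the forms (\ref{fenjieshiA})--(\ref{fenjieshi-B}) and the reduction $R=O$, $S=O$ in the ``$\Leftarrow$'' direction is harmless but unnecessary — the paper concludes directly from Definition \ref{def-DMS} once $E\overset{*}\leq F$ is in hand.
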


\begin{proof}
``$\Leftarrow$'' \
Let $\widehat E=E+\varepsilon E_0$,
$\widehat F=F+\varepsilon F_0\in\mathbb{D}^{m\times n}$
and
$\widehat E\overset{\tiny\mbox{\rm D\!}}\leq\widehat F$.
Then $E\leq F$ and
 the DMPGI of $\widehat F-\widehat E$ exists.
And because $ EF^{\rm T}=\left(EF^{\rm T} \right)^{\rm T}$
and
$F^{\rm T}E=\left(F^{\rm T}E \right)^{\rm T}$,
by applying Lemma \ref{peiRStar-Def}(2),
 we get $E\overset{*}\leq F$.
Therefore,
$\widehat E\overset{\tiny\mbox{\rm DM\!-}*}\leq\widehat F$.

``$\Rightarrow$'' \
Let $\widehat E\overset{\tiny\mbox{\rm DM\!-}*}\leq\widehat F$.
It follows from Definition \ref{nice-def}
that $E \leq F$ and the DMPGI  of $\widehat F-\widehat E$  exists.
Therefore, $\widehat E\overset{\tiny\mbox{\rm D\!}}\leq\widehat F$.
Since  $\widehat E\overset{\tiny\mbox{\rm DM\!-}*}\leq\widehat F$,
we get $ EF^{\rm T}=\left(EF^{\rm T} \right)^{\rm T}$
and
$F^{\rm T}E=\left(F^{\rm T}E \right)^{\rm T}$ from Theorem \ref{dmstar-fenjieshi}.
\end{proof}

Next, we study the relationship between the Dual-minus star partial order and the D-star partial  order.
\begin{theorem}
\label{D-DM-star-Th}
Let
$\widehat  E=E+\varepsilon E_0$,
$\widehat  F=F+\varepsilon F_0\in\mathbb{D}^{m\times n}$,
and $\widehat  E$ and $\widehat  F$ have the DMPGIs.
If $\widehat E\overset{\tiny\mbox{\rm D\!-}{*}}\leq\widehat F$, then
$\widehat E\overset{\tiny\mbox{\rm DM\!-}{*}}\leq\widehat F$.
\end{theorem}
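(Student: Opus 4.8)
The plan is to imitate the proof of the sharp analogue, Theorem \ref{D-DM-Sharp-Th}, using the canonical form for the D-star partial order (Lemma \ref{PEIDSPO-Char-2-Th}) in place of that for the D-sharp one. Assume $\widehat E\overset{\tiny\mbox{\rm D\!-}{*}}\leq\widehat F$, with $\rk(E)=r_e$ and $\rk(F)=r_f$. By Lemma \ref{PEIDSPO-Char-2-Th} there are orthogonal matrices $U,V$ for which $\widehat E$ and $\widehat F$ have the form (\ref{PEIDSPO-Char-2}), with $D_1,D_2$ invertible. Two things then have to be extracted from this form in order to invoke Definition \ref{def-DMS}: that $E\overset{*}\leq F$, and that the DMPGI of $\widehat F-\widehat E$ exists.

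For the first, take standard parts of the equalities in Lemma \ref{PEIDSPO-Char-2-Th}(3), i.e.\ $\widehat E^{\rm T}\widehat E=\widehat E^{\rm T}\widehat F$ and $\widehat E\widehat E^{\rm T}=\widehat F\widehat E^{\rm T}$; this gives $E^{\rm T}E=E^{\rm T}F$ and $EE^{\rm T}=FE^{\rm T}$, hence $E\overset{*}\leq F$ by Lemma \ref{peiRStar-Def}. For the second, I would observe that the pair (\ref{PEIDSPO-Char-2}) is literally a special case of the pair (\ref{DSPO-Char-2star}) occurring in Theorem \ref{dmstar-fenjieshi}: in the dual part of $\widehat F$ the blocks in the $(1,1),(1,3),(3,1)$ positions coincide with those of $\widehat E$ (all equal to $E_1,E_3,E_7$), while the remaining blocks $F_2:=E_2-D_1^{-1}E_4^{\rm T}D_2$, $F_4:=E_4-D_2E_2^{\rm T}D_1^{-1}$, $F_5,F_6,F_8$ are unconstrained. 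Hence Theorem \ref{dmstar-fenjieshi} applies directly and yields $\widehat E\overset{\tiny\mbox{\rm DM\!-}{*}}\leq\widehat F$. Alternatively, one can verify $\rk\left(\begin{smallmatrix}F_0-E_0&F-E\\F-E&O\end{smallmatrix}\right)=2\rk(F-E)$ by hand from $\widehat F-\widehat E$ as displayed in (\ref{see-one}) and conclude via Lemma \ref{Wang2021MAMT-Th2.1}, exactly as in the Remark following Theorem \ref{nice-3}; either route closes the proof through Definition \ref{def-DMS}.

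There is essentially no obstacle here: the whole content is bookkeeping between two already-established canonical forms. The only point requiring a little care is checking that the block pattern of (\ref{PEIDSPO-Char-2}) matches (\ref{DSPO-Char-2star}) slot for slot — in particular that the ``$D_1$ corner'' of the dual part is unchanged from $\widehat E$ to $\widehat F$ — which is precisely the feature (\ref{PEIDSPO-Char-2}) was designed to have. I would also note in passing, as for the sharp case where an explicit separating example is given, that the converse implication fails: the Dual-minus star order leaves the off-diagonal dual blocks $F_2,F_4$ free, whereas the D-star order forces $F_2=E_2-D_1^{-1}E_4^{\rm T}D_2$ and $F_4=E_4-D_2E_2^{\rm T}D_1^{-1}$.
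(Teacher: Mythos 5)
Your proposal is correct and follows essentially the same route as the paper: invoke the canonical form (\ref{PEIDSPO-Char-2}) from Lemma \ref{PEIDSPO-Char-2-Th}, read off $E\overset{*}\leq F$, and confirm the DMPGI of $\widehat F-\widehat E$ exists (the paper cites Lemma \ref{Wang2021MAMT-Th2.1} directly, which is your second, by-hand route; your first route via matching (\ref{PEIDSPO-Char-2}) against (\ref{DSPO-Char-2star}) is an equally valid shortcut). The block-matching check and the remark on the failure of the converse are correct but not needed for the statement itself.
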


\begin{proof}
Let
${\rk}\left( E \right)=r_e$, ${\rk}\left( F \right)=r_f$.
Since
 $\widehat E\overset{\tiny\mbox{\rm D\!-}*}\leq\widehat F$,
 then by Lemma \ref{PEIDSPO-Char-2-Th}
 we obtain  that
  $E\overset{*}\leq F$
  and
 the forms of $\widehat E$ and  $\widehat F$ are as in (\ref{PEIDSPO-Char-2}).
 Then $\widehat F-\widehat E$ has the DMPGI by Lemma \ref{Wang2021MAMT-Th2.1}.
 Therefore,  according to Definition
\ref{def-DMS},
we have $\widehat E\overset{\tiny\mbox{\rm DM\!-}*}\leq\widehat F$.
\end{proof}

\begin{theorem}
\label{nice---2}
Let $\widehat E=E+\varepsilon E_0$,
$\widehat F=F+\varepsilon F_0\in\mathbb{D}^{m\times n}$,
and $\widehat  E$ and $\widehat  F$ have DMPGIs. Then
$\widehat E\overset{\tiny\mbox{\rm  D\!-}\ast}\leq\widehat F$ if and only if
 $\widehat E\overset{\tiny\mbox{\rm  DM\!-}\ast}\leq\widehat F$ and
\begin{align}
\label{go-8}
 EE^\dagger\left(F_0-E_0\right)=\left(E^{\rm {\rm T}}\right)^\dagger E_0^{\rm T}\left(E-F\right), \
 \left(F_0-E_0\right)E^\dagger E=\left(E-F\right)E_0^{\rm T}\left(E^{\rm T}\right)^\dagger.
 \end{align}
\end{theorem}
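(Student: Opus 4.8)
The plan is to re-use, essentially verbatim, the argument behind Theorem~\ref{The-nice1} (the characterization of the D-star order via the Dual-minus order), with the Dual-minus order there replaced by the Dual-minus star order, which already bundles in $E\overset{*}\leq F$. Indeed, $\widehat E\overset{\tiny\mbox{\rm DM\!-}*}\leq\widehat F$ holds if and only if $\widehat E$ and $\widehat F$ admit the block form~(\ref{DSPO-Char-2star}) for some orthogonal $U,V$ (Theorem~\ref{dmstar-fenjieshi}), while $\widehat E\overset{\tiny\mbox{\rm D\!-}*}\leq\widehat F$ holds if and only if they admit the form~(\ref{PEIDSPO-Char-2}) (Lemma~\ref{PEIDSPO-Char-2-Th}); the only difference between the two is that~(\ref{PEIDSPO-Char-2}) additionally forces the two off-diagonal dual blocks of $\widehat F$ to be $F_2=E_2-D_1^{-1}E_4^{\rm T}D_2$ and $F_4=E_4-D_2E_2^{\rm T}D_1^{-1}$, the blocks $F_5,F_6,F_8$ staying free in both. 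So the whole theorem reduces to checking that, once $\widehat E\overset{\tiny\mbox{\rm DM\!-}*}\leq\widehat F$ is assumed and the coordinates~(\ref{DSPO-Char-2star}) are fixed, the pair of matrix identities in~(\ref{go-8}) is equivalent to these two block identities.

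For the ``if'' part I would assume $\widehat E\overset{\tiny\mbox{\rm DM\!-}*}\leq\widehat F$ together with~(\ref{go-8}), write $\widehat E,\widehat F$ as in~(\ref{DSPO-Char-2star}), and substitute into~(\ref{go-8}) using $EE^{\dagger}=U\,\mathrm{diag}(I_{r_e},O,O)\,U^{\rm T}$, $E^{\dagger}E=V\,\mathrm{diag}(I_{r_e},O,O)\,V^{\rm T}$, $E-F=U\,\mathrm{diag}(O,-D_2,O)\,V^{\rm T}$, and $(E^{\rm T})^{\dagger}=(E^{\dagger})^{\rm T}$; this is literally the computation displayed in the proof of Theorem~\ref{The-nice1}, with the matrices called $N,M$ there renamed $F_2-E_2,F_4-E_4$. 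The left-hand sides of~(\ref{go-8}) then project $F_0-E_0$ onto its first block-row, resp.\ its first block-column, leaving only the block $F_2-E_2$, resp.\ $F_4-E_4$; the right-hand sides collapse to $-D_1^{-1}E_4^{\rm T}D_2$, resp.\ $-D_2E_2^{\rm T}D_1^{-1}$. Hence $F_2=E_2-D_1^{-1}E_4^{\rm T}D_2$ and $F_4=E_4-D_2E_2^{\rm T}D_1^{-1}$, so $\widehat F$ has the form~(\ref{PEIDSPO-Char-2}), and Lemma~\ref{PEIDSPO-Char-2-Th} gives $\widehat E\overset{\tiny\mbox{\rm D\!-}*}\leq\widehat F$.

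For the ``only if'' part, assume $\widehat E\overset{\tiny\mbox{\rm D\!-}*}\leq\widehat F$. Then Theorem~\ref{D-DM-star-Th} yields $\widehat E\overset{\tiny\mbox{\rm DM\!-}*}\leq\widehat F$, and Lemma~\ref{PEIDSPO-Char-2-Th} puts $\widehat E,\widehat F$ into the form~(\ref{PEIDSPO-Char-2}); substituting those into each side of~(\ref{go-8}) and running the same block multiplications, both sides of the first identity reduce to the one explicit matrix with off-diagonal block $-D_1^{-1}E_4^{\rm T}D_2$, and both sides of the second to the one with block $-D_2E_2^{\rm T}D_1^{-1}$, so~(\ref{go-8}) holds. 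I expect the only delicate point to be the bookkeeping of the projectors $EE^{\dagger}$, $E^{\dagger}E$ against the block pattern of $F_0-E_0$, together with the placement of the transposes in $(E^{\rm T})^{\dagger}E_0^{\rm T}(E-F)$ --- needed to confirm that~(\ref{go-8}) pins down exactly $F_2$ and $F_4$ and imposes nothing on $F_5,F_6,F_8$; since here $E\overset{*}\leq F$ is handed over by the hypothesis rather than extracted from transpose-symmetry conditions as in Theorem~\ref{The-nice1}, nothing beyond re-running that computation is needed.
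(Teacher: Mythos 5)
Your proposal is correct and follows essentially the same route as the paper's own proof: reduce to the canonical forms (\ref{DSPO-Char-2star}) via Theorem \ref{dmstar-fenjieshi} and (\ref{PEIDSPO-Char-2}) via Lemma \ref{PEIDSPO-Char-2-Th}, use Theorem \ref{D-DM-star-Th} for the forward implication, and check by block multiplication that (\ref{go-8}) pins down exactly $F_2=E_2-D_1^{-1}E_4^{\rm T}D_2$ and $F_4=E_4-D_2E_2^{\rm T}D_1^{-1}$. (Your formulas for $F_2,F_4$ are in fact the correct ones; the paper's final substitution line contains a copy-paste slip from the sharp-order case.)
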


\begin{proof}

 ``$\Leftarrow$''
\
Let $\widehat E=E+\varepsilon E_0$,
$\widehat F=F+\varepsilon F_0\in\mathbb{D}^{m\times n}$
and
$\widehat E\overset{\tiny\mbox{\rm  DM\!-}\ast}\leq\widehat F$.
Then the forms of
$\widehat E$ and $\widehat F$ are as in (\ref{DSPO-Char-2star}).
It is easy to check that
\begin{align}
\label{20231008-3}
EE^\dagger\left(F_0-E_0\right)
&
=
U\begin{pmatrix}
I_{{\rk}\left( E \right)}&O&O\\
O&O&O\\
O&O&O
\end{pmatrix}
\begin{pmatrix}
O&F_2-E_2&O\\
F_4-E_4&F_5&F_6\\
O&F_8&O\end{pmatrix}V^{\rm T}
=
U
\begin{pmatrix}
O&F_2-E_2&O\\
O&O&O\\
O&O&O\end{pmatrix}V^{\rm T},
\\
\nonumber
\left(E^{\rm {\rm T}}\right)^\dagger E_0^{\rm T}\left(E-F\right)&
=
U\begin{pmatrix}
D_1^{-1}&O&O\\
O&O&O\\
O&O&O
\end{pmatrix}
\begin{pmatrix}
E_1^{\rm T}&E_4^{\rm T}&E_7^{\rm T}\\
E_2^{\rm T}&O&O\\
E_3^{\rm T}&O&O
\end{pmatrix}
\begin{pmatrix}
O&O&O\\
O&-D_2&O\\
O&O&O
\end{pmatrix}V^{\rm T}=
U\begin{pmatrix}
O&-D_1^{-1}E_4^{\rm T}D_2&O\\
O&O&O\\
O&O&O
\end{pmatrix}V^{\rm T},
\end{align}
It follows from
$EE^\dagger\left(F_0-E_0\right)
=
\left(E^{\rm {\rm T}}\right)^\dagger E_0^{\rm T}\left(E-F\right)$
that  $F_2=E_2-D_1^{-1}E_4^{\rm T}D_2$.

 Since
\begin{align}
\label{20231008-4}
\left(F_0-E_0\right)E^\dagger E
&=
U\begin{pmatrix}
O&F_2-E_2&O\\
F_4-E_4&F_5&F_6\\
O&F_8&O\end{pmatrix}
\begin{pmatrix}
I_{{\rk}\left( E \right)}&O&O\\
O&O&O\\
O&O&O
\end{pmatrix} V^{\rm T}
=
U\begin{pmatrix}
O&O&O\\
F_4-E_4&O&O\\
O&O&O
\end{pmatrix}V^{\rm T},
\\
\nonumber
\left(E-F\right)E_0^{\rm T}\left(E^{\rm T}\right)^\dagger
&
=
U\begin{pmatrix}
O&O&O\\
O&-D_2&O\\
O&O&O
\end{pmatrix}
\begin{pmatrix}
E_1^{\rm T}&E_4^{\rm T}&E_7^{\rm T}\\
E_2^{\rm T}&O&O\\
E_3^{\rm T}&O&O
\end{pmatrix}
\begin{pmatrix}
D_1^{-1}&O&O\\
O&O&O\\
O&O&O
\end{pmatrix}V^{\rm T}
=U\begin{pmatrix}
O&O&O\\
-D_2E_2^{\rm T}D_1^{-1}&O&O\\
O&O&O
\end{pmatrix}V^{\rm T},
\end{align}
applying
$\left(F_0-E_0\right)E^\dagger E
=
\left(E-F\right)E_0^{\rm T}\left(E^{\rm T}\right)^\dagger$,
we have   $F_4=E_4-D_2E_2^{\rm T}D_1^{-1}$.

Substituting $F_4=E_4-D_2E_4D_1^{-1}$ and $F_2=E_2-D_1^{-1}E_2D_2$
into (\ref{DSPO-Char-2star}) gives (\ref{PEIDSPO-Char-2}).
Therefore,
$\widehat E\overset{\tiny\mbox{\rm  D \!-}*}\leq\widehat F$ holds.

``$\Rightarrow$''
\
Let $\widehat E\overset{\tiny\mbox{\rm  D \!-}*}\leq\widehat F$.
It follows from Theorem \ref{D-DM-star-Th}
that
$\widehat E\overset{\tiny\mbox{\rm DM\!-}*}\leq\widehat F$
and
  the the forms of
$\widehat E$ and $\widehat F$ are as in (\ref{PEIDSPO-Char-2}).
 Then  (\ref{go-8}) holds.
\end{proof}

Next,
we consider the relationship between the Dual-minus star partial order and the P-star partial  order.

\begin{theorem}
\label{20231008-5}
Let
$\widehat  E=E+\varepsilon E_0$,
$\widehat  F=F+\varepsilon F_0\in\mathbb{D}^{m\times n}$,
 and $\widehat  E$ and $\widehat  F$ have DMPGIs.
If $\widehat E\overset{\tiny\mbox{\rm P\!-}{*}}\leq\widehat F$, then
$\widehat E\overset{\tiny\mbox{\rm DM\!-}{*}}\leq\widehat F$.
\end{theorem}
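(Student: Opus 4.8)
The plan is to follow the same pattern as Theorem~\ref{D-DM-star-Th}. Assume $\widehat E\overset{\tiny\mbox{\rm P\!-}{*}}\leq\widehat F$ and put ${\rk}(E)=r_e$, ${\rk}(F)=r_f$. First I would invoke Lemma~\ref{peiDPSO-Char-1-Th}: its item~(3) gives in particular $E^{\rm T}E=E^{\rm T}F$ and $EE^{\rm T}=FE^{\rm T}$, which is exactly item~(4) of Lemma~\ref{peiRStar-Def}; hence $E\overset{*}\leq F$. Equivalently, item~(4) of Lemma~\ref{peiDPSO-Char-1-Th} supplies orthogonal matrices $U,V$ bringing $\widehat E$ and $\widehat F$ to the form~(\ref{peiDPSO-Char-1}), whose standard parts already display the star order $E\overset{*}\leq F$.

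Next I would subtract the two matrices in~(\ref{peiDPSO-Char-1}):
\[
\widehat F-\widehat E=U\begin{pmatrix}O&O&O\\O&D_2&O\\O&O&O\end{pmatrix}V^{\rm T}+\varepsilon U\begin{pmatrix}O&O&O\\O&F_5&F_6\\O&F_8&O\end{pmatrix}V^{\rm T},
\]
so that every nonzero block of both the standard and the dual part of $\widehat F-\widehat E$ lives inside the ``middle'' row and column controlled by the invertible block $D_2$. A direct block computation (the same one used in the proof of Theorem~\ref{dmstar-fenjieshi} and in the Remark following Theorem~\ref{nice-3}) then yields
\[
{\rk}\begin{pmatrix}F_0-E_0&F-E\\F-E&O\end{pmatrix}=2\,{\rk}(D_2)=2\,{\rk}(F-E),
\]
so Lemma~\ref{Wang2021MAMT-Th2.1} gives that the DMPGI of $\widehat F-\widehat E$ exists. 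Since moreover $E\overset{*}\leq F$, Definition~\ref{def-DMS} yields $\widehat E\overset{\tiny\mbox{\rm DM\!-}{*}}\leq\widehat F$, which completes the proof.

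I do not expect a genuine obstacle here: the argument is entirely routine once the canonical form~(\ref{peiDPSO-Char-1}) is available. The only point that deserves a line of care is the step where the P-star hypothesis is used to force the \emph{standard} parts to be star-ordered (not merely minus-ordered) via Lemma~\ref{peiRStar-Def}, together with the structural observation — read directly off~(\ref{peiDPSO-Char-1}) — that the dual part of the difference $\widehat F-\widehat E$ picks up no entries outside the $D_2$-block; it is precisely this feature that makes the rank identity hold and hence guarantees the existence of the DMPGI of $\widehat F-\widehat E$.
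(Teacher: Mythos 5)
Your proposal is correct and follows essentially the same route as the paper: both deduce $E\overset{*}\leq F$ and the canonical forms from Lemma~\ref{peiDPSO-Char-1-Th}, verify via Lemma~\ref{Wang2021MAMT-Th2.1} that the DMPGI of $\widehat F-\widehat E$ exists, and conclude by Definition~\ref{def-DMS}. The only difference is that you write out the block subtraction and rank computation explicitly, which the paper leaves implicit.
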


\begin{proof}
 Let $\widehat  E$, $\widehat  F\in\mathbb{D}^{m\times n}$,
${\rk}\left( E \right)=r_e$, ${\rk}\left( F \right)=r_f$, $\widehat  E$ and $\widehat  F$ have the DMPGIs, and
 $\widehat E\overset{\tiny\mbox{\rm P\!-}*}\leq\widehat F$.
 Then we obtain  that
  $E\overset{*}\leq F$
  and
 the forms of $\widehat E$ and $\widehat F$ are as in (\ref{peiDPSO-Char-1}) by Lemma \ref{peiDPSO-Char-1-Th}.
Then $\widehat F-\widehat E$ has the DMPGI by Lemma \ref{Wang2021MAMT-Th2.1}.
 Therefore,  according to Definition
\ref{def-DMS},
we have $\widehat E\overset{\tiny\mbox{\rm DM\!-}*}\leq\widehat F$.
\end{proof}

\begin{theorem}
\label{nice---3}
Let $\widehat E=E+\varepsilon E_0$,
$\widehat F=F+\varepsilon F_0\in\mathbb{D}^{m\times n}$,
and $\widehat  E$ and $\widehat  F$ has the DMPGIs.
Then $\widehat E\overset{\tiny\mbox{\rm  P\!-}{*}}\leq\widehat F$ if and only if
 $\widehat E\overset{\tiny\mbox{\rm  DM\!-}\ast}\leq\widehat F$
 and
 \begin{align}
\label{go-10}
EE^\dagger\left(F_0-E_0\right)=O, \
 \left(F_0-E_0\right)E^\dagger E=O.
\end{align}
\end{theorem}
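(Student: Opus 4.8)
The plan is to follow the same template used for Theorem \ref{nice---2}, now replacing the target decomposition of the D-star order by that of the P-star order in Lemma \ref{peiDPSO-Char-1-Th}(4). The workhorse is Theorem \ref{dmstar-fenjieshi}, which says that $\widehat E\overset{\tiny\mbox{\rm DM\!-}*}\leq\widehat F$ holds exactly when $\widehat E$ and $\widehat F$ admit the block forms (\ref{DSPO-Char-2star}) for suitable orthogonal $U$ and $V$. From the standard part $E=U\,\mathrm{diag}(D_1,O,O)\,V^{\rm T}$ one reads off the two orthogonal projectors $EE^\dagger=U\,\mathrm{diag}(I_{r_e},O,O)\,U^{\rm T}$ and $E^\dagger E=V\,\mathrm{diag}(I_{r_e},O,O)\,V^{\rm T}$, which are all that is needed to evaluate the two side conditions in (\ref{go-10}).

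For the direction ``$\Leftarrow$'', I would assume $\widehat E\overset{\tiny\mbox{\rm DM\!-}*}\leq\widehat F$ and put $\widehat E$, $\widehat F$ into the forms (\ref{DSPO-Char-2star}). A direct block multiplication — the same one already carried out in (\ref{20231008-3})--(\ref{20231008-4}) for Theorem \ref{nice---2}, since the standard part of $\widehat E$ there is identical — gives
\[
EE^\dagger\left(F_0-E_0\right)=U\begin{pmatrix}O&F_2-E_2&O\\O&O&O\\O&O&O\end{pmatrix}V^{\rm T},\qquad
\left(F_0-E_0\right)E^\dagger E=U\begin{pmatrix}O&O&O\\F_4-E_4&O&O\\O&O&O\end{pmatrix}V^{\rm T}.
\]
Hence the hypotheses $EE^\dagger\left(F_0-E_0\right)=O$ and $\left(F_0-E_0\right)E^\dagger E=O$ force $F_2=E_2$ and $F_4=E_4$. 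Substituting these back into (\ref{DSPO-Char-2star}) turns that pair of block forms literally into (\ref{peiDPSO-Char-1}), so $\widehat E\overset{\tiny\mbox{\rm P\!-}*}\leq\widehat F$ by Lemma \ref{peiDPSO-Char-1-Th}(4).

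For the direction ``$\Rightarrow$'', I would assume $\widehat E\overset{\tiny\mbox{\rm P\!-}*}\leq\widehat F$; then Theorem \ref{20231008-5} already yields $\widehat E\overset{\tiny\mbox{\rm DM\!-}*}\leq\widehat F$, and Lemma \ref{peiDPSO-Char-1-Th}(4) realizes $\widehat E$, $\widehat F$ in the forms (\ref{peiDPSO-Char-1}) — which are precisely the forms (\ref{DSPO-Char-2star}) with $F_2=E_2$ and $F_4=E_4$. Plugging into the two identities displayed above immediately gives $EE^\dagger\left(F_0-E_0\right)=O$ and $\left(F_0-E_0\right)E^\dagger E=O$, i.e. (\ref{go-10}). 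I do not expect a genuine obstacle here; the only care needed is the bookkeeping check that with $F_2=E_2$ and $F_4=E_4$ the block pattern of (\ref{DSPO-Char-2star}) coincides term by term with (\ref{peiDPSO-Char-1}), and that the projector computation reproduces exactly the one recorded in (\ref{20231008-3})--(\ref{20231008-4}). In other words, the whole argument is the $\widehat E\overset{\tiny\mbox{\rm DM\!-}*}\leq\widehat F$ analysis of Theorem \ref{nice---2}, transcribed against the simpler target forms of the P-star order.
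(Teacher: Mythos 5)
Your proposal is correct and follows essentially the same route as the paper's own proof: both directions rest on the block forms of Theorem \ref{dmstar-fenjieshi} and Lemma \ref{peiDPSO-Char-1-Th}(4), with Theorem \ref{20231008-5} supplying the implication from P-star to DM-star, and the projector computations forcing $F_2=E_2$ and $F_4=E_4$ exactly as in the paper. No gaps.
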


\begin{proof}
``$\Leftarrow$'' \
Let $\widehat E=E+\varepsilon E_0$,
 $\widehat F=F+\varepsilon F_0\in\mathbb{D}^{m\times n}$ 
and
 $\widehat E\overset{\tiny\mbox{\rm DM\!-}*}\leq\widehat F$.
Then the forms of $\widehat E$ and $\widehat F$ are as in
 (\ref{DSPO-Char-2star}).
Since
 $EE^\dagger\left(F_0-E_0\right)=O$
 and
 $\left(F_0-E_0\right)E^\dagger E=O$,
 we get  $F_4=E_4$ and   $F_2=E_2$.
Therefore,
$\widehat E\overset{\tiny\mbox{\rm  P \!-}*}\leq\widehat F$ holds.

``$\Rightarrow$'' \
Let $\widehat E\overset{\tiny\mbox{\rm  P \!-}*}\leq\widehat F$.
It follows from Theorem \ref{20231008-5}
that
$\widehat E\overset{\tiny\mbox{\rm DM\!-}*}\leq\widehat F$
and
  the the forms of
$\widehat E$ and $\widehat F$ are as in (\ref{peiDPSO-Char-1}).
It is easy to check that
 $EE^\dagger\left(F_0-E_0\right)=O$
 and
 $\left(F_0-E_0\right)E^\dagger E=O$.
Therefore, (\ref{go-10}) holds.
\end{proof}

At last, we show
the relationships among
the Dual-minus partial order,
the Dual-minus sharp partial order,
  the D-sharp partial order
and the G-sharp partial order
in Figure 1,
and the relationships among
the Dual-minus partial order,
the Dual-minus star partial order,
  the D-star partial order
and the P-star partial order
in Figure 2.

\begin{center}
\begin{tikzpicture}
    \node (E) at (0,0) {$\widehat E\overset{\tiny\mbox{\rm D\!}}\leq\widehat F$};
    \node (F) at (-2,-3.5) {$\widehat E\overset{\tiny\mbox{\rm DM\!-}{\#}}\leq\widehat F$};
    \node (G) at (2,-3.5) {$\widehat E\overset{\tiny\mbox{\rm  D\!-}\#}\leq\widehat F$};

    \draw[->] (-0.6,-0.6) -- (-2.02,-3.0) node[draw,midway, above, sloped,yshift=4pt] {(\ref{20231010-1})};
    \draw[->] (0.6,-0.6) -- (2.02,-3.0) node[draw,midway, above, sloped,yshift=4pt] {(\ref{go2})};
    \draw[->] (-1.0,-3.6) -- (1.0,-3.6) node[draw,midway, above, sloped,yshift=-18pt] {(\ref{perfect-1})};

    \draw[->] (-1.8,-3.0)--(-0.4,-0.6);
    \draw[->] (1.8,-3.0)--(0.4,-0.6);
    \draw[->] (1.0,-3.4) -- (-1.0,-3.4);
\end{tikzpicture}  \qquad
\begin{tikzpicture}
    \node (E) at (0,0) {$\widehat E\overset{\tiny\mbox{\rm D\!}}\leq\widehat F$};
    \node (F) at (-2,-3.5) {$\widehat E\overset{\tiny\mbox{\rm DM\!-}{\#}}\leq\widehat F$};
    \node (G) at (2,-3.5) {$\widehat E\overset{\tiny\mbox{\rm  G\!-}\#}\leq\widehat F$};

    \draw[->] (-0.6,-0.6) -- (-2.02,-3.0) node[draw,midway, above, sloped,yshift=4pt] {(\ref{20231010-1})};
    \draw[->] (0.6,-0.6) -- (2.02,-3.0) node[draw,midway, above, sloped,yshift=4pt] {(\ref{gogo}};
    \draw[->] (-1,-3.6) -- (1,-3.6) node[draw,midway, above, sloped,yshift=-18pt] {(\ref{perfect-4})};

    \draw[->] (-1.8,-3.0)--(-0.4,-0.6);
    \draw[->] (1.8,-3.0)--(0.4,-0.6);
    \draw[->] (1,-3.4) -- (-1,-3.4);
\end{tikzpicture}
\\
Figure 1:\ The relationships among
the Dual-minus partial order,
the Dual-minus sharp partial order,
  the D-sharp partial order
and the G-sharp partial order
\end{center}

\begin{center}
\begin{tikzpicture}
    \node (E) at (0,0) {$\widehat E\overset{\tiny\mbox{\rm D\!}}\leq\widehat F$};
    \node (F) at (-2,-3.5) {$\widehat E\overset{\tiny\mbox{\rm DM\!-}{*}}\leq\widehat F$};
    \node (G) at (2,-3.5) {$\widehat E\overset{\tiny\mbox{\rm  D\!-}\ast}\leq\widehat F$};

    \draw[->] (-0.6,-0.6) -- (-2.02,-3.0) node[draw,midway, above, sloped,yshift=4pt] {(\ref{perfect-6})};
    \draw[->] (0.6,-0.6) -- (2.02,-3.0) node[draw,midway, above, sloped,yshift=4pt] {(\ref{go})};
    \draw[->] (-1,-3.6) -- (1,-3.6) node[draw,midway, above, sloped,yshift=-18pt] {(\ref{go-8})};

    \draw[->] (-1.8,-3.0)--(-0.4,-0.6);
    \draw[->] (1.8,-3.0)--(0.4,-0.6);
    \draw[->] (1,-3.4) -- (-1,-3.4);
\end{tikzpicture}  \qquad
\begin{tikzpicture}
    \node (E) at (0,0) {$\widehat E\overset{\tiny\mbox{\rm D\!}}\leq\widehat F$};
    \node (F) at (-2,-3.5) {$\widehat E\overset{\tiny\mbox{\rm DM\!-}{*}}\leq\widehat F$};
    \node (G) at (2,-3.5) {$\widehat E\overset{\tiny\mbox{\rm  P\!-}\ast}\leq\widehat F$};

    \draw[->] (-0.6,-0.6) -- (-2.02,-3.0) node[draw,midway, above, sloped,yshift=4pt] {(\ref{perfect-6})};
    \draw[->] (0.6,-0.6) -- (2.02,-3.0) node[draw,midway, above, sloped,yshift=4pt] {(\ref{go1})};
    \draw[->] (-1,-3.6) -- (1,-3.6) node[draw,midway, above, sloped,yshift=-18pt] {(\ref{go-10})};

    \draw[->] (-1.8,-3.0)--(-0.4,-0.6);
    \draw[->] (1.8,-3.0)--(0.4,-0.6);
    \draw[->] (1,-3.4) -- (-1,-3.4);
\end{tikzpicture}
\\
Figure 2:\
The relationships among
the Dual-minus partial order,
the Dual-minus star partial order,
and the D-star partial order
and the P-star partial order
\end{center}

%



\end{document}